\documentclass[11pt,a4paper] {article}
\vfuzz2pt 
\hfuzz2pt 
 \usepackage{a4}
\usepackage{amsmath} 
\usepackage{amssymb}
\usepackage{amsthm}
\usepackage{graphicx}
\usepackage{tikz}
\usepackage{float}

\usepackage{color}

\topmargin -1.0cm
\oddsidemargin 0in
\evensidemargin 0in
\textwidth 6.3 truein
\textheight 9.1 truein

\def\+{\oplus}

\newcommand{\HH}{{\mathbb H}}

\newcommand{\R}{{\mathbb R}}

\newcommand{\Z}{{\mathbb Z}}

\newcommand{\Sp}{{\mathbb S}}

\newcommand{\cG}{{\mathcal G}}

\newcommand{\cJ}{{\mathcal J}}

\newcommand{\cC}{{\mathcal C}}

\newcommand{\cM}{{\mathcal M}}

\newcommand{\cT}{{\mathcal T}}

\newcommand{\cR}{{\mathcal R}}

\renewcommand{\epsilon}{\varepsilon}

\renewcommand{\l}{\lambda}

\newcommand{\FL}{{\rm{FL}}}

\newcommand{\ds}{\displaystyle}
\def\squareforqed{\hbox{\rlap{$\sqcap$}$\sqcup$}}
\def\qed{\ifmmode\else\unskip\quad\fi\squareforqed}
\def\smartqed{\def\qed{\ifmmode\squareforqed\else{\unskip\nobreak\hfil
\penalty50\hskip1em\null\nobreak\hfil\squareforqed
\parfillskip=0pt\finalhyphendemerits=0\endgraf}\fi}}

 \newtheorem{theorem}{\textbf{Theorem}}[section]
 \newtheorem{remark}[theorem]{\textbf{Remark}}
 \newtheorem{lemma}[theorem]{\textbf{Lemma}}

 \newtheorem{corollary}[theorem]{\textbf{Corollary}}
 \newtheorem{proposition}[theorem]{\textbf{Proposition}}
 \newtheorem{definition}[theorem]{\textbf{Definition}}
 
 
\numberwithin{equation}{section}
\title{Homogenization of a transmission problem \\ with Hamilton-Jacobi equations and  a two-scale interface.\\ Effective transmission conditions} 
\author{Yves Achdou \thanks { Univ. Paris Diderot, Sorbonne Paris Cit{\'e}, Laboratoire Jacques-Louis Lions, UMR 7598, UPMC, CNRS, F-75205 Paris, France.
 achdou@ljll.univ-paris-diderot.fr},
Nicoletta Tchou \thanks {IRMAR, Universit{\'e} de Rennes 1, Rennes, France, nicoletta.tchou@univ-rennes1.fr}
}
\begin{document}

\maketitle
\begin{abstract}
  We consider a family of optimal control problems in the plane  with  dynamics and running costs possibly discontinuous across a two-scale oscillatory interface.
Typically, the amplitude of the oscillations is of the order of $\epsilon$ while the period is of the order of  $\epsilon^ 2$.
 As $\epsilon\to 0$, the  interfaces  tend to a straight line $\Gamma$.
  We study the asymptotic behavior of the value function as $\epsilon\to 0$.
 We prove that the value function tends to the solution of  Hamilton-Jacobi equations in the two half-planes limited by  $\Gamma$, 
with an effective transmission condition on $\Gamma$ keeping track of the oscillations.
\end{abstract}

\section{Introduction}
\label{sec:setting}

The main motivation of  this paper is to study the asymptotic behavior as $\epsilon\to 0$ of the value function of an optimal 
control problem in $\R^2$  in which the running cost and dynamics may jump across a periodic oscillatory interface $\Gamma_{\epsilon,\epsilon}$, when 
the oscillations of $\Gamma_{\epsilon,\epsilon}$ have an amplitude   of the order of $\epsilon$ and  a period  of the order of $\epsilon^2$, (see Figure \ref{fig:geom1} below, which actually describes a more general case).
The respective roles of the two indices in $\Gamma_{\epsilon,\epsilon}$  will be explained in \S~\ref{sec:geometry} below.
 The interface $\Gamma_{\epsilon,\epsilon}$ separates two unbounded regions of $\R^2$, $\Omega_{\epsilon,\epsilon}^L$ and $\Omega_{\epsilon,\epsilon}^R$. 
The present work is a natural continuation of a previous one, \cite{MR3565416}, in which both the amplitude and the period of the oscillations were of the order of $\epsilon$.
In \cite{MR3565416}, it was possible to make a change of variables in order to map the interface onto a flat one.  Here, it is no longer possible, 
and the route leading to the homogenization result becomes more complex.
\\
To characterize the optimal control problem,  one has to specify the admissible dynamics at a point $x\in \Gamma_{\epsilon,\epsilon}$: in our setting, no mixture is allowed at the interface, 
i.e. the admissible dynamics are the ones corresponding to the subdomain $ \Omega_{\epsilon,\epsilon}^L$ {\bf and} entering $ \Omega_{\epsilon,\epsilon}^L$,
 or corresponding to the subdomain $ \Omega_{\epsilon,\epsilon}^R$ {\bf and} entering $ \Omega_{\epsilon,\epsilon}^R$. 
Hence the situation differs from those studied in the 
articles of G. Barles, A. Briani and E. Chasseigne \cite{barles2011bellman,barles2013bellman} and of  G. Barles, A. Briani, E. Chasseigne and N. Tchou \cite{MR3424272}, 
in which mixing is allowed at the interface. The optimal control problem under consideration has been first studied in \cite{oudet2014}:  the value function is
characterized as the viscosity solution of a Hamilton-Jacobi equation
 with special transmission conditions on $ \Gamma_{\epsilon,\epsilon}$; a comparison principle for this problem is proved in \cite{oudet2014} with arguments 
 from the theory of optimal control similar to those introduced in \cite{barles2011bellman,barles2013bellman}. In parallel to \cite{oudet2014}, 
Imbert and Monneau have studied similar problems
from the viewpoint of PDEs, see \cite{imbert:hal-01073954},  and have obtained comparison  results for quasi-convex Hamiltonians.
There has been a very active research effort on finding simpler and more general/powerful proofs of the above-mentioned comparison results,
 see  \cite{2016arXiv161101977B} and the very recent work of P-L. Lions and P. Souganidis \cite{2017arXiv170404001L}.
\\
In particular,  \cite{imbert:hal-01073954} contains a characterization of the viscosity solution of the transmission problem
 with a reduced set of test-functions;  this characterization will be used in the present work.
Note that \cite{oudet2014,imbert:hal-01073954} can be seen as extensions of articles 
devoted to the analysis of Hamilton-Jacobi equations on networks, see \cite{MR3057137,MR3023064,MR3358634,MR3621434,MR3556345}, 
because the notion of interface used there can be seen as a generalization of the notion of vertex (or junction) for a network.
\\
We will see that  as $\epsilon$ tends to $0$, the value function converges  to the solution of an effective problem
related to a flat interface $\Gamma$,  with Hamilton-Jacobi equations in the  half-planes limited by $\Gamma$ and a transmission condition on $\Gamma$.
Whereas the partial differential equation  far from the interface is unchanged, the main difficulty consists in finding the effective transmission condition on $\Gamma$.
Naturally, the latter depends on the dynamics and running costs  but also 
keeps  memory of the vanishing oscillations. 
The present work is strongly related to \cite{MR3565416}, but also to two articles, \cite{MR3299352} and \cite{MR3441209},
  about singularly  perturbed problems  leading to effective Hamilton-Jacobi equations on networks.
In \cite{MR3299352},  the authors of the present paper study a family of star-shaped planar domains  $D^\epsilon$ 
made of $N$ non intersecting semi-infinite strips  of thickness $\epsilon$ and of a central region whose diameter is proportional to $\epsilon$.  
As  $\epsilon \to 0$,  the domains $D^\epsilon$ tend to a  network $\cG$ made of $N$ half-lines sharing an endpoint $O$, named the vertex or junction point.
 For infinite horizon optimal control problems  in which the state is constrained to remain in the closure of $D^\epsilon$, 
the value function tends to the solution of a Hamilton-Jacobi equation on $\cG$, with an effective  transmission condition at $O$. 
 The related effective Hamiltonian, which  corresponds to  trajectories staying close to   the junction point, was 
obtained in \cite{MR3299352} as the limit of a sequence of ergodic constants corresponding to larger and larger bounded subdomains. 
Note that the same problem  and the question of the correctors in unbounded domains were also discussed by P-L. Lions in his lectures at Coll{\`e}ge de France respectively in January 2017, and
in  January and February 2014, see \cite{PLL}.
  The same kind of  construction was then used in \cite{MR3441209}, in which  Galise, Imbert and Monneau study a family of time dependent Hamilton-Jacobi equations 
in a simple network composed of two half-lines with a perturbation of the Hamiltonian localized in a small region close to the junction.  
In  \cite{MR3441209}, a key point was the use of a single test-function at the vertex which was first proposed in \cite{MR3621434,imbert:hal-01073954}.
 This idea will be also used in the present work. Note  that similar techniques were used in the  recent works of Forcadel et al, \cite{forcadel:hal-01097085,MR3640560,forcadel:hal-01332787}, which deal with applications to traffic flows. Finally, multiscale homogenization and singular perturbation problems with first and second order Hamilton Jacobi equations  (without discontinuities)
have been addressed in \cite{MR2371792,MR2487745}.
\\
Note that slight modifications of the techniques used below yield the asymptotic behavior of the transmission problems with oscillatory  interfaces of amplitude $\epsilon$ and period $\epsilon ^{1+q}$ with $q\ge 0$, (see \S~\ref{sec:effect-probl-obta} and \cite{MR3565416} for $q=0$ and Remark \ref{sec:main-result-5} below for $q>0$).
Also, even if we focus on  a two-dimensional problem,  all the results below hold in the case when $\R^N$ is divided into two subregions, separated by a smooth and periodic $N-1$ dimensional oscillatory interface with two scales. 
Finally, we wish to stress the fact that  an important possible application of our work is the homogenization of a transmission problem  in geometrical optics, with two media separated by a two-scale interface.
\\
The paper is organized as follows: in the remaining part of \S~\ref{sec:setting}, we set the problem. We will see in particular that it is convenient to consider a more general setting 
than the one  described above, with two small parameters $\eta $ and $\epsilon$ instead of one: more precisely, the amplitude of the oscillations will be of the order of $\eta$ whereas the period will of the order of $\eta\epsilon$. In \S~\ref{sec:effect-probl-obta}, we keep $\eta$ fixed  while $\epsilon$ tends to $0$: the region where the two media are mixed is a strip whose width is of  the order of $\eta$: in this region, an effective Hamiltonian is found by  classical  homogenization techniques, see \cite{LPV}; the main difficulty is to obtain  
the effective transmission conditions on the  boundaries of the strip (two parallel straight lines) and to prove
 the convergence. The techniques will be reminiscent of \cite{MR3299352,MR3441209,MR3565416}, because only one parameter tends to $0$. The effective transmission conditions keeps track of the geometry of the interface at the scale $\epsilon$. \\
 In \S~\ref{sec:second-passage-limit}, we take the latter effective problem which depends on $\eta$, and have $\eta$ tend to $0$: we obtain a new effective transmission condition on a single flat interface, and prove the convergence result. Note that this passage to the limit is an intermediate step in order to study the two-scale homogenization problem described in the beginning of the introduction, but that it has also an interest for itself.
\\
  In \S~\ref{sec:simult-pass-limit}, we take $\eta=\epsilon$, i.e. we consider the 
interface $\Gamma_{\epsilon,\epsilon}$ described at the beginning of the introduction, and let $\epsilon$ tend to $0$: at the limit, we obtain the same effective problem as the one found in \S~\ref{sec:second-passage-limit}, by letting first $\epsilon$  then $\eta$ tend to $0$. 
\\
Sections~\ref{sec:effect-probl-obta}, \ref{sec:second-passage-limit} and \ref{sec:simult-pass-limit} are organized in the same way: the main result is stated first, then proved in the remaining part of the section. For the conciseness of \S~\ref{sec:effect-probl-obta}, some technical proofs will be given in an appendix.

\subsection{The geometry}
\label{sec:geometry}
Let $(e_1, e_2)$ be an orthonormal basis of $\R^2$.
For two real numbers $a,b$ such that $0<a<b<1$, consider the  set $\Sp=\left\{ a, b \right\}+\Z$.
Let $g:  \R \to \R$ be a continuous function,  periodic with period $1$, such that
\begin{enumerate}
\item $g$ is $\cC^2$ in $\R \backslash \Sp$
\item $g\left(a \right)= g\left(b\right)=0$
\item $\ds \mathop {\lim}_{t\to a^-} g'(t)=\mathop{\lim}_{t\to a^+} g'(t)=+\infty$ and 
 $\ds \mathop {\lim}_{t\to a^-} \frac {g''(t)}{g'(t)}=\mathop{\lim}_{t\to a^+}  \frac {g''(t)}{g'(t)}=0$
\item $\ds \mathop {\lim}_{t\to  b^-} g'(t)=\mathop{\lim}_{t\to b^+} g'(t)=-\infty$ and 
 $\ds \mathop {\lim}_{t\to  b^-}  \frac {g''(t)}{g'(t)}=\mathop{\lim}_{t\to b^+}  \frac {g''(t)}{g'(t)}=0$               
\end{enumerate}
Let $G$ be the multivalued Heavyside step function, periodic with period $1$,  such that
\begin{enumerate}
\item $G(a)=G(b)=[-1,1]$
\item $G(t)=\{1\}$ if $t\in (a,b )$
\item $G(t)=\{-1\}$ if $t\in [0,a)\cup(b,1]$
\end{enumerate}
Let $\eta$ and $\epsilon$ be two positive parameters: consider the $\cC^2$ curve $\Gamma_{\eta,\epsilon}$ defined as the graph of the multivalued function
 $g_{\eta,\epsilon}: x_2\mapsto \eta G(\frac {x_2}{ \epsilon \eta})+ \eta\epsilon  g(\frac {x_2}{ \epsilon \eta})$. 
We also define the domain $\Omega_{\eta,\epsilon}^R $  (resp. $\Omega_{\eta,\epsilon}^L $) as the epigraph (resp. hypograph) 
of  $g_{\eta,\epsilon}$:
\begin{eqnarray}\label{eq:44}
  \Omega_{\eta,\epsilon}^R= & \{ x\in \R^2 :   x_1> g_{\eta,\epsilon} (x_2)\},\\
\label{eq:45}
  \Omega_{\eta,\epsilon}^L= & \{ x\in \R^2 :   x_1< g_{\eta,\epsilon} (x_2)\}.
\end{eqnarray}
The unit normal vector $n_{\eta,\epsilon}(x)$ at $ x\in \Gamma_{\eta,\epsilon}$ is defined  as follows: setting $y_2= \frac { x_2}{\eta \epsilon}$,
\begin{displaymath} 
n_{\eta,\epsilon}(x)= \left\{
  \begin{array}[c]{cl}
    \ds  
\left (1 + \left( g'(y_2)\right)^2 \right) ^{-1/2}  \left(  e_1 -   g' (y_2)   e_2\right)
  \quad &\hbox{ if    }\quad y_2 \notin  \Sp\\ 
\ds - e_2     \quad&\hbox{ if    }\quad y_2 = a \mod{1}\\

\ds  e_2     \quad&\hbox{ if    } \quad y_2 = b \mod{1}.
  \end{array}
\right.
\end{displaymath}
Note that $n_{\eta,\epsilon}(x)$ is  oriented from $\Omega^L_{\eta,\epsilon}$ to $\Omega^R_{\eta,\epsilon}$.

\begin{figure}[H]
  \begin{center}
    \begin{tikzpicture}[scale=0.5, trans/.style={thick,<->,shorten >=2pt,shorten <=2pt,>=stealth} ]
      \draw[red,thick] (0,10) -- (10,10);
      \draw[red,thick] (10,10) .. controls (11,10) and (11,9.5) .. (10,9.5);
      \draw[red,thick] (10,9.5) -- (0,9.5);
      \draw[red,thick] (0,9.5) .. controls (-0.5,9.5) and (-0.5,9) .. (0,9);
      \draw[red,thick] (0,9) -- (10,9);
      \draw[red,thick] (10,9) .. controls (11,9) and (11,8.5) .. (10,8.5);
      \draw[red,thick] (10,8.5) -- (0,8.5);
      \draw[red,thick] (0,8.5) .. controls (-0.5,8.5) and (-0.5,8) .. (0,8);
      \draw[red,thick] (0,8) -- (10,8);
      \draw[red,thick] (10,8) .. controls (11,8) and (11,7.5) .. (10,7.5);
      \draw[red,thick] (10,7.5) -- (0,7.5);
      \draw[red,thick] (0,7.5) .. controls (-0.5,7.5) and (-0.5,7) .. (0,7);
      \draw[red,thick] (0,7) -- (10,7);
      \draw[red,thick] (10,7) .. controls (11,7) and (11,6.5) .. (10,6.5);
      \draw[red,thick] (10,6.5) -- (0,6.5);
      \draw[red,thick] (0,6.5) .. controls (-0.5,6.5) and (-0.5,6) .. (0,6);
      \draw[red,thick] (0,6) -- (10,6);
      \draw[red,thick] (10,6) .. controls (11,6) and (11,5.5) .. (10,5.5);
      \draw[red,thick] (10,5.5) -- (0,5.5);
      \draw[red,thick] (0,5.5) .. controls (-0.5,5.5) and (-0.5,5) .. (0,5);
      \draw[red,thick] (0,5) -- (10,5);
      \draw[red,thick] (10,5) .. controls (11,5) and (11,4.5) .. (10,4.5);
      \draw[red,thick] (10,4.5) -- (0,4.5);
      \draw[red,thick] (0,4.5) .. controls (-0.5,4.5) and (-0.5,4) .. (0,4);
      \draw[red,thick] (0,4) -- (10,4);
      \draw[red,thick] (10,4) .. controls (11,4) and (11,3.5) .. (10,3.5);
      \draw[red,thick] (10,3.5) -- (0,3.5);
      \draw[red,thick] (0,3.5) .. controls (-0.5,3.5) and (-0.5,3) .. (0,3);
      \draw[trans] (0.1,11) -- (10.1,11) ;
      \draw (5,11)  node[above]{$2\eta$};
      \draw[trans] (9.7,11) -- (10.8,11) ;
      \draw (10.4,11)  node[above]{{\small $\sim \eta\epsilon$}};
      \draw[trans] (-0.6,11) -- (0.4,11) ;
      \draw (-0.2,11)  node[above]{{\small $\sim \eta\epsilon$}};
      \draw[trans] (5,10.1) -- (5,8.9) ;
      \draw (5,9.5)  node[right]{$\eta\epsilon$};
      \draw (-1,6.5)  node[left]{$\Omega^L_{\eta,\epsilon}$};
      \draw (11,6.5)  node[right]{$\Omega^R_{\eta,\epsilon}$};
    \end{tikzpicture}
    \caption{The oscillatory interface $\Gamma_{\eta,\epsilon}$ separates $\Omega^L_{\eta,\epsilon}$ and $\Omega^R_{\eta,\epsilon}$. It has two scales: its amplitude $\eta$ and period $\eta\epsilon$}
    \label{fig:geom1}
  \end{center}
\end{figure}
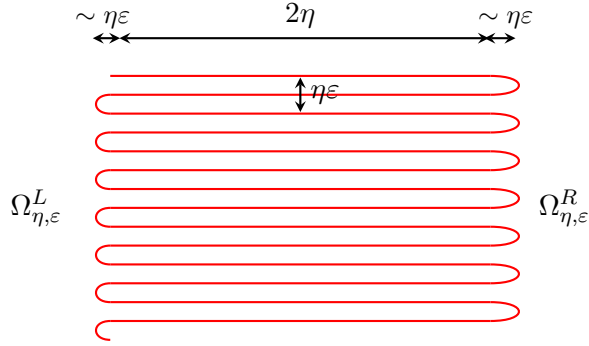
In \S~\ref{sec:effect-probl-obta}, we will let $\epsilon$ tend to zero and keep $\eta$ fixed. 
In \S~\ref{sec:simult-pass-limit}, we will focus on the case when $\eta=\epsilon$ and let $\epsilon$ tend to $0$. 

\subsection{The optimal control problem in  $\Omega^L_{\eta,\epsilon}\cup \Omega^R_{\eta,\epsilon} \cup  \Gamma_{\eta,\epsilon}$}
\label{optimal1}
We consider infinite-horizon optimal control problems which have different dynamics and running costs in the regions 
$\Omega^i_{\eta,\epsilon}$, $i=L, R$.
  The sets of controls associated to the index $i=L,R$ will be called $A^i$; 
similarly, the notations $f^i$ and  $\ell^i$ will be used for the dynamics and running costs.
The following  assumptions will be made in all the present work.
\subsubsection{Standing Assumptions}
\label{sec:assumptions}
\begin{description}
\item{[H0]} $A$ is a metric space (one can take $A=\R^m$). For $i=L,R$,  $A^i$ is a non empty compact subset of $A$ and
 $f^i:  A^i \to \R^2$ is  a continuous function. The sets $A^i$ are disjoint.
 Define  $M_f=   \max_{i=L, R}   \sup_{
  a \in    A^i } | f^i(a)| $.
  The notation $F^i$ will be used for the set $F^i=\{f^i(a), a\in A^i\} $.
\item{[H1]} For $i=L,R$, the function $\ell^i:  A^i\to \R$ is  continuous and bounded. 
 Define  $M_\ell=   \max_{i=L, R}   \sup_{
a \in    A^i } | \ell^i(a)| $.
\item{[H2] }  For any $i=L,R$, the non empty  set $\FL^i= \{ (f^i(a), \ell^i(a) ) , a\in A^i\}$ is  closed and convex. 
\item{[H3] } There is a real number $\delta_0>0$ such that for $i=L,R$,
$   B(0,\delta_0) \subset F^i$.
\end{description}
We stress the fact that all the  results below hold provided  the latter  assumptions are satisfied,  
although, in order to avoid tedious repetitions, we will not mention them explicitly in the statements.
\begin{remark}\label{sec:standing-assumptions}
  We have assumed that the dynamics $f^i$ and running costs $\ell^i$, $i=L,R$, do not depend on $x$. This assumption is made only for simplicity. With further classical assumptions, it would be possible to generalize all the results contained in this paper to the case
when $f^i$ and $\ell^i$ depend on $x$: typical such assumptions are
\begin{enumerate}
\item the Lipschitz continuity of $f^i$ with respect to $x$ uniformly in $a\in A^i$: there exists $L_f$ such that
for $i=L,R$,  $\forall a\in A^i$, $x,y\in \R^2$, $   |f^i(x,a)-f^i(y,a)|\le L_f |x-y|$
\item  the existence of  a modulus of continuity $\omega_\ell$ 
 such that for any $i=L,R$, $x,y\in \R^2$ and $a\in A^i$, 
 \begin{displaymath}
 |\ell^i(x,a)-\ell^i(y,a)|\le \omega_{\ell} (|x-y|).
 \end{displaymath}
\end{enumerate}
Even if these assumptions are standard, keeping track of a possible slow dependency of the Hamiltonian with respect to $x$ in the homogenization process below would have led us to tackle several technical questions and to significantly increase the length of the paper. It would have also made the 
essential ideas more difficult to grasp.\\
Moreover, it is clear that if $f^i $ and $\ell^i$ do depend on $x$ except in a strip containing the oscillatory interface,  
for example the strip $\{ x: |x_1|<1 \}$ for $\epsilon$ and $\eta$ small enough, then all what follows holds and does not require any further technicality.
\end{remark}

\subsubsection{The optimal control problem}
\label{sec:optim-contr-probl}
Let the closed set $\cM_{\eta,\epsilon}$ be defined as follows:
\begin{equation}
  \label{eq: def-M_epsilon}
  \cM_{\eta,\epsilon}=\left\{(x,a);\; x\in \R^2,\quad  a\in A^i  \hbox{ if } x\in \Omega_{\eta,\epsilon}^i,\; i=L,R,\;  \hbox{ and } a     \in A^L\cup A^R  \hbox{ if } x \in \Gamma_{\eta,\epsilon}\right \}.
  \end{equation}
  The dynamics $f_{\eta,\epsilon}$ is a function defined in  $\cM_{\eta,\epsilon}$ with values in $\R^2$: 
\begin{displaymath}
\forall (x,a)\in \cM_{\eta,\epsilon},\quad\quad   f_{\eta,\epsilon}(x, a)= f^i(a)
\quad \hbox{ if } x\in \Omega_{\eta,\epsilon}^i \hbox{ or }(x\in\Gamma_{\eta,\epsilon} \hbox{ and }     a\in A^i).
\end{displaymath}
The function $f_{\eta,\epsilon}$ is continuous on $\cM_{\eta,\epsilon}$ because the sets $A^i$ are disjoint. Similarly, let the running cost $\ell_{\eta,\epsilon}: \cM_{\eta,\epsilon}\to \R$ be given by
\begin{displaymath}
\forall (x,a)\in \cM_{\eta,\epsilon},\quad\quad   \ell_{\eta,\epsilon}(x, a)=
 \ell^i(a).
 \quad \hbox{ if } x\in \Omega_{\eta,\epsilon}^i \hbox{ or }(x\in\Gamma_{\eta,\epsilon} \hbox{ and }     a\in A^i).
\end{displaymath}
For $x\in \R^2$, the set of admissible  trajectories starting from $x$ is 
\begin{equation}
  \label{eq:2}
\cT_{x,\eta,\epsilon}=\left\{
  \begin{array}[c]{ll}
    ( y_x, a)  \ds \in L_{\rm{loc}}^\infty( \R^+; \cM_{\eta,\epsilon}): \quad   & y_x\in {\rm{Lip}}(\R^+; \R^2), 
 \\  &\ds  y_x(t)=x+\int_0^t f_{\eta,\epsilon}( y_x(s), a(s)) ds \quad  \forall t\in \R^+ 
  \end{array}\right\}.
\end{equation}
The cost associated to the trajectory $ ( y_x, a)\in \cT_{x,\eta,\epsilon}$ is 
\begin{equation}
  \label{eq:46}
  \cJ_{\eta,\epsilon}(x;( y_x, a) )=\int_0^\infty \ell_{\eta,\epsilon}(y_x(t),a(t)) e^{-\lambda t} dt,
\end{equation}
with $\lambda>0$. The value function of the infinite horizon optimal control problem is 
\begin{equation}
  \label{eq:4}
v_{\eta,\epsilon}(x)= \inf_{( y_x, a)\in \cT_{x,\eta,\epsilon}}   \cJ_{\eta,\epsilon}(x;( y_x, a) ).
\end{equation}
\begin{proposition}\label{sec:assumption}
 The value function $v_{\eta,\epsilon}$ is bounded uniformly in $\eta$ and $\epsilon$ and continuous in $ \R^2$.
\end{proposition}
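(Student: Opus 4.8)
The plan is to treat the two assertions separately: the bound is elementary, and the continuity will follow from the dynamic programming principle together with the controllability assumption [H3].

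For the uniform bound, I would first observe that $\cT_{x,\eta,\epsilon}\neq\emptyset$ for every $x$. Indeed, if $x\in\overline{\Omega^R_{\eta,\epsilon}}$, then by [H3] there is $\bar a\in A^R$ with $f^R(\bar a)=\tfrac{\delta_0}{2}e_1$, and the trajectory $y_x(t)=x+t\tfrac{\delta_0}{2}e_1$ stays in $\overline{\Omega^R_{\eta,\epsilon}}$ for all $t\ge0$ because it only increases the first coordinate while $\Omega^R_{\eta,\epsilon}$ is the epigraph of $g_{\eta,\epsilon}$; hence $(y_x,\bar a)\in\cT_{x,\eta,\epsilon}$, and symmetrically with $-e_1$ and $A^L$ if $x\in\overline{\Omega^L_{\eta,\epsilon}}$. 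Since by [H1] every admissible trajectory has cost in $[-M_\ell/\lambda,\,M_\ell/\lambda]$, this gives $|v_{\eta,\epsilon}(x)|\le M_\ell/\lambda$ for every $x$, independently of $\eta$ and $\epsilon$.

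For continuity (with $\eta,\epsilon$ now fixed) I would in fact prove local Lipschitz continuity. The basic device is a concatenation estimate: if an admissible trajectory joins $x'$ to $x$ on $[0,\tau]$, then appending to it a $\sigma$-optimal trajectory issued from $x$ yields an element of $\cT_{x',\eta,\epsilon}$ of cost at most $M_\ell\tau+e^{-\lambda\tau}\bigl(v_{\eta,\epsilon}(x)+\sigma\bigr)$; using $|v_{\eta,\epsilon}|\le M_\ell/\lambda$, $1-e^{-\lambda\tau}\le\lambda\tau$ and letting $\sigma\to0$ gives $v_{\eta,\epsilon}(x')\le v_{\eta,\epsilon}(x)+2M_\ell\tau$. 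Thus it suffices to join any two nearby points by an admissible trajectory of duration $O(|x-x'|)$. When $x_0$ lies in an open subdomain, this is immediate: for $x,x'\in B(x_0,r)$ with $3r<\mathrm{dist}(x_0,\Gamma_{\eta,\epsilon})$ the segment $[x',x]$ stays in that subdomain and is run at speed $\delta_0/2$ by a control of the corresponding set (by [H3]), so applying the estimate both ways, $v_{\eta,\epsilon}$ is Lipschitz on $B(x_0,r)$.

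The delicate case is $x_0\in\Gamma_{\eta,\epsilon}$, and this is the step I expect to be the main obstacle. Here I would use the regularity of the interface: under the assumptions on $g$, near every point $\Gamma_{\eta,\epsilon}$ is a Lipschitz graph over one of the two coordinate axes (the $x_1$-axis on the flat parts and near their endpoints, the $x_2$-axis near the tips of the oscillations), so $\Omega^L_{\eta,\epsilon}$ and $\Omega^R_{\eta,\epsilon}$ are Lipschitz domains and, for $\rho$ small, $\overline{\Omega^i_{\eta,\epsilon}}\cap B(x_0,\rho)$ is quasiconvex — any two of its points are joined by a polygonal arc contained in $\overline{\Omega^i_{\eta,\epsilon}}$ of length at most $C(\eta,\epsilon)$ times their distance. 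Since $\overline{\Omega^i_{\eta,\epsilon}}\subset\Omega^i_{\eta,\epsilon}\cup\Gamma_{\eta,\epsilon}$, running each segment of such an arc at speed $\delta_0/2$ with a control in $A^i$ is admissible (the other open subdomain is never entered), and whenever the trajectory sits on $\Gamma_{\eta,\epsilon}$ the control may switch between $A^L$ and $A^R$ — which is exactly what makes the concatenation with a $\sigma$-optimal trajectory issued from $x_0$, or from $x$, admissible. Applying the concatenation estimate both ways yields $|v_{\eta,\epsilon}(x)-v_{\eta,\epsilon}(x_0)|\le\tfrac{4M_\ell C(\eta,\epsilon)}{\delta_0}|x-x_0|$ for $x$ near $x_0$, and combining with the interior bound, $v_{\eta,\epsilon}$ is locally Lipschitz, hence continuous, on $\R^2$. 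Checking the local quasiconvexity near the multivalued part of the interface and near the oscillation tips is the only point needing care — one verifies that $\Omega^i_{\eta,\epsilon}$ is there the epigraph or hypograph of a $C^1$ function over a suitable axis. Alternatively, one may bypass this discussion entirely by invoking \cite{oudet2014}, where $v_{\eta,\epsilon}$ is characterized as the continuous viscosity solution of the Hamilton--Jacobi transmission problem on $\Gamma_{\eta,\epsilon}$.
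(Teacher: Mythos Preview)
Your proposal is correct. The paper's own proof is a one-line appeal to classical arguments (it cites \cite{MR1484411} and nothing more), so you have in effect written out what the authors chose to leave implicit: the uniform bound $|v_{\eta,\epsilon}|\le M_\ell/\lambda$ from [H1], and continuity via dynamic programming plus the controllability assumption [H3], with the interface handled by local quasiconvexity of the $\cC^2$ subdomains.

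Two remarks worth keeping in mind. First, your Lipschitz constant near the interface carries a factor $C(\eta,\epsilon)$ coming from the geometry; this is enough for the proposition as stated, but note that the paper later (Remark~\ref{sec:reduced-set-test-1}) uses a Lipschitz constant independent of $\eta$ and $\epsilon$, which requires a slightly sharper argument exploiting [H3] more directly (essentially, one can steer from $x'$ to $x$ in time $|x-x'|/\delta_0$ regardless of how many times the interface is crossed, because [H3] gives velocities in \emph{both} $F^L$ and $F^R$ pointing in any prescribed direction). Second, your alternative of invoking \cite{oudet2014} is exactly in the spirit of what the paper does throughout.
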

\begin{proof}
This result is classical and can be proved with the same arguments as in \cite{MR1484411}.
\end{proof}

\subsection{The Hamilton-Jacobi equation}
\label{sec:hamilt-jacobi-equat}
Similar optimal control problems have recently been studied in \cite{MR3358634,MR3621434,oudet2014,imbert:hal-01073954}.
It turns out that $v_{\eta,\epsilon}$ can be characterized as the viscosity solution of a Hamilton-Jacobi equation with a discontinuous Hamiltonian,
(once the notion of viscosity solution has been specially tailored to cope with the above mentioned discontinuity).  
We briefly recall the definitions used e.g. in \cite{oudet2014}.
\paragraph{Hamiltonians} For $i=L,R$, let the Hamiltonians $H^i: \R^2\rightarrow \R $  and  $H_{ \Gamma_{\eta,\epsilon}}:\Gamma_{\eta,\epsilon}\times \R^2\times \R^2\to \R$  be defined by
\begin{eqnarray}
  \label{eq:7}
H^i(p)&=& \max_{a\in A^i} (-p \cdot f^i(a) -\ell^i(a)),\\
  \label{eq:8}
H_{ \Gamma_{\eta,\epsilon}} (x,p^L,p^R)&=& \max \{ \;H_{ \Gamma_{\eta,\epsilon}}^{+,L}(x,p^L),H_{ \Gamma_{\eta,\epsilon}}^{-,R} (x,p^R)\},
\end{eqnarray}
where in (\ref{eq:8}),  $p^L\in \R^2 $ and $p^R \in \R^2$.

\begin{eqnarray}
  \label{eq:29}
H_{ \Gamma_{\eta,\epsilon}}^{-,i} (x,p)= \max_{a\in A^i \hbox{ s.t. }   f^i(a)\cdot n_{\eta,\epsilon}(x)\ge 0} (-p\cdot f^i(a) -\ell^i(a)), \quad \forall x\in \Gamma_{\eta,\epsilon}, \forall p\in \R^2,
\\
\label{eq:30}
H_{ \Gamma_{\eta,\epsilon}}^{+,i} (x,p)= \max_{a\in A^i \hbox{ s.t. }   f^i(a)\cdot n_{\eta,\epsilon}(x)\le 0} (-p\cdot f^i(a) -\ell^i(a)), \quad \forall x\in \Gamma_{\eta,\epsilon}, \forall p\in \R^2.
\end{eqnarray}

\paragraph{Test-functions}
For $\eta >0$ and $\epsilon>0$,  the function $\phi: \R^2\to \R$ is an admissible test-function if
$\phi$ is continuous in $\R^2$  and for any $i\in \{L,R\}$, $\phi|_{\overline{\Omega^i_{\eta,\epsilon}}} \in\cC^1(\overline{\Omega^i_{\eta,\epsilon}})$.
\\The set of admissible test-functions is noted $\cR_{\eta,\epsilon}$. If $\phi \in \cR_{\eta,\epsilon}$, $x\in \Gamma_{\eta,\epsilon}$ and $i\in \{L,R\}$, we  set
$\displaystyle D\phi^i(x)= \lim_{\overset{ x'\to x}{x'\in \Omega^i_{\eta,\epsilon}}}D\phi(x')$.
\begin{remark}
  \label{sec:test-functions}
If $x\in  \Gamma_{\eta,\epsilon}$, $\phi$ is test-function and $p^L= D\phi^L (x)$, $p^R= D\phi^R (x)$, then $p^L -p^R$ is colinear to $n_{\eta,\epsilon}(x)$  defined in \S~\ref{sec:geometry}.
\end{remark}

\paragraph{Definition of viscosity solutions}
We are going to define viscosity solutions of the following transmission problem:
 \begin{eqnarray}
\label{eq:58}
\lambda u(x)+H^L(Du(x))&= 0, \quad \quad &    \hbox{ if }x\in \Omega^L_{\eta,\epsilon},\\
\label{eq:59}
\lambda u(x)+H^R(Du(x))&= 0, \quad \quad &    \hbox{ if }x\in \Omega^R_{\eta,\epsilon},\\
\label{eq:60}
\lambda u(x)+H_{ \Gamma_{\eta,\epsilon}} (x,Du^L(x),Du^R(x))&= 0, \quad \quad &    \hbox{ if }x\in\Gamma_{\eta,\epsilon},
 \end{eqnarray}
where $u^L$ (respectively $u^R$) stands for $u|_{\overline{\Omega^L_{\eta,\epsilon}}}$ ( respectively $u|_{\overline{\Omega^R_{\eta,\epsilon}}}$). For brevity, we  also note this problem 
  \begin{equation}\label{HJaepsilon}
    \lambda u+{\cal{H}}_{\eta,\epsilon}(x, Du)=0.
\end{equation}

\begin{itemize}
\item An upper semi-continuous function $u:\R^2\to\R$ is a subsolution of \eqref{HJaepsilon}
 if for any $x\in \R^2$, any $\phi\in\cR_{\eta,\epsilon}$ s.t. $u-\phi$ has a local maximum point at $x$, then
 \begin{eqnarray}
  \label{eq:5bis}
\lambda u(x)+H^i(D\phi^i(x))&\le 0, \quad \quad &    \hbox{ if }x\in \Omega^i_{\eta,\epsilon},\\
\label{eq:5bisgamma}
\lambda u(x)+H_{ \Gamma_{\eta,\epsilon}} (x,D\phi^L(x),D\phi^R(x))&\le 0, \quad \quad &    \hbox{ if }x\in\Gamma_{\eta,\epsilon},
 \end{eqnarray}
where,  for  $x\in \Gamma_{\eta,\epsilon}$, the notation $D\phi^i(x)$ is introduced in the definition of the test-functions,
see also Remark~\ref{sec:test-functions}.
\item  A lower semi-continuous function $u:\R^2\to\R$ is a supersolution of \eqref{HJaepsilon}
 if for any $x\in \R^2$, any $\phi\in\cR_{\eta,\epsilon}$ s.t. $u-\phi$ has a local minimum point at $x$, then
 \begin{eqnarray}
  \label{eq:6bis} \lambda u(x)+H^i(D\phi^i(x))&\geq 0,  \quad \quad &    \hbox{ if }x\in \Omega^i_{\eta,\epsilon},\\
\label{eq:6bisgamma} \lambda u(x)+H_{ \Gamma_{\eta,\epsilon}} (x,D\phi^L(x),D\phi^R(x))&\ge 0 \quad \quad &    \hbox{ if }x\in\Gamma_{\eta,\epsilon}.
 \end{eqnarray}
  \item A continuous function $u:\R^2\to\R$ is a   viscosity solution of \eqref{HJaepsilon}   if it is both a viscosity sub and supersolution   of \eqref{HJaepsilon}.
\end{itemize}
We skip the proof of the following theorem, see \cite{oudet2014,imbert:hal-01073954}.
\begin{theorem}
  \label{existence-epsilon_a}
The value function $v_{\eta,\epsilon}$  defined in \eqref{eq:4}  is the unique bounded  viscosity solution of \eqref{HJaepsilon}.
\end{theorem}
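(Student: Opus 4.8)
Even though complete proofs of this theorem are available in \cite{oudet2014,imbert:hal-01073954}, let us indicate the route one would take. The plan is to establish the two assertions separately: that $v_{\eta,\epsilon}$ is a viscosity solution of \eqref{HJaepsilon}, and that the bounded viscosity solution is unique; boundedness, with the explicit bound $\|v_{\eta,\epsilon}\|_\infty\le M_\ell/\lambda$, is already granted by Proposition~\ref{sec:assumption}. The first assertion rests on the dynamic programming principle: for every $x\in\R^2$ and $\tau>0$,
\begin{equation*}
v_{\eta,\epsilon}(x)=\inf_{(y_x,a)\in\cT_{x,\eta,\epsilon}}\Big\{\int_0^\tau \ell_{\eta,\epsilon}(y_x(t),a(t))e^{-\lambda t}\,dt+e^{-\lambda\tau}v_{\eta,\epsilon}(y_x(\tau))\Big\},
\end{equation*}
which is proved as in \cite{MR1484411}, the only structural facts required being that $\cT_{x,\eta,\epsilon}$ is stable under concatenation (clear from \eqref{eq:2}) and that near-optimal controls can be selected measurably. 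Away from $\Gamma_{\eta,\epsilon}$ the subsolution and supersolution properties \eqref{eq:5bis} and \eqref{eq:6bis} then follow from the classical arguments: in the open set $\Omega^i_{\eta,\epsilon}$ a constant (respectively, $\delta$-optimal) control keeps the trajectory on one side of $\Gamma_{\eta,\epsilon}$ for a short time, and one lets $\tau\to0^+$ in the DPP.

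The genuinely new point of the existence proof is the behaviour at $\Gamma_{\eta,\epsilon}$. For the subsolution inequality \eqref{eq:5bisgamma} I would show separately that $\lambda v_{\eta,\epsilon}(x)+H_{\Gamma_{\eta,\epsilon}}^{+,L}(x,D\phi^L(x))\le 0$ and its $R$-analogue: picking $a\in A^L$ with $f^L(a)\cdot n_{\eta,\epsilon}(x)<0$ (possible by [H3]), the constant-control trajectory instantly enters the open set $\Omega^L_{\eta,\epsilon}$ and stays there for a short time — using that $\Gamma_{\eta,\epsilon}$ is $\cC^1$ except at the points where $x_2/(\eta\epsilon)\in\Sp$, and that even there the vertical pieces of $\Gamma_{\eta,\epsilon}$ lie on the correct side of such a velocity — so the DPP with $\tau\to0^+$ gives the estimate; maximising over $\{a\in A^L:f^L(a)\cdot n_{\eta,\epsilon}(x)<0\}$ and closing the constraint up to $\le 0$ by continuity of $f^L,\ell^L$ on the compact $A^L$ finishes it. For the supersolution inequality \eqref{eq:6bisgamma} I would argue by contradiction, assuming both $\lambda v_{\eta,\epsilon}(x)+H_{\Gamma_{\eta,\epsilon}}^{+,L}(x,D\phi^L(x))<0$ and $\lambda v_{\eta,\epsilon}(x)+H_{\Gamma_{\eta,\epsilon}}^{-,R}(x,D\phi^R(x))<0$; along a $\delta$-optimal trajectory one splits $[0,\tau]$ into the times spent in $\Omega^L_{\eta,\epsilon}$, in $\Omega^R_{\eta,\epsilon}$ and on $\Gamma_{\eta,\epsilon}$, and checks that on each part $\frac{d}{dt}\big(\phi(y_x(t))e^{-\lambda t}\big)\le -\lambda v_{\eta,\epsilon}(x)e^{-\lambda t}-c+o(1)$ as $\tau\to0$ — on the $\Gamma_{\eta,\epsilon}$ part using that a velocity tangent to $\Gamma_{\eta,\epsilon}$ is produced only by a control of $A^L$ entering $\Omega^L_{\eta,\epsilon}$ or of $A^R$ entering $\Omega^R_{\eta,\epsilon}$, hence is dominated by $H_{\Gamma_{\eta,\epsilon}}^{+,L}$ or $H_{\Gamma_{\eta,\epsilon}}^{-,R}$. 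Integrating and invoking the DPP would then yield $v_{\eta,\epsilon}(x)\le v_{\eta,\epsilon}(x)-c\tau+o(\tau)+\delta$, impossible for $\tau,\delta$ small.

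Uniqueness is the hard part, and amounts to a comparison principle: a bounded USC subsolution $u$ and a bounded LSC supersolution $w$ of \eqref{HJaepsilon} satisfy $u\le w$. I would follow the strategy of \cite{oudet2014,imbert:hal-01073954,barles2011bellman}. First, [H3] gives $H^i(p)\ge\delta_0|p|-M_\ell$, so a bounded subsolution is Lipschitz on each $\overline{\Omega^i_{\eta,\epsilon}}$; this makes the one-sided gradients $D\phi^i$ meaningful and, following \cite{imbert:hal-01073954}, lets one reduce the class of test-functions at $\Gamma_{\eta,\epsilon}$ to a single one-parameter family, which is what makes the interface terms tractable. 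Next, away from the finitely many (in any bounded region) exceptional points $\{x\in\Gamma_{\eta,\epsilon}:x_2/(\eta\epsilon)\in\Sp\}$, a $\cC^2$ bi-Lipschitz change of variables flattens a neighbourhood of $\Gamma_{\eta,\epsilon}$ and turns \eqref{HJaepsilon} into a transmission problem on a straight interface of exactly the type for which comparison is known; at an exceptional point the curve is a corner or has a vertical tangent but is still a one-dimensional junction of two incident arcs, which a local doubling-of-variables argument with a penalisation adapted to the two arcs handles. Finally one globalises in the usual way: replace $u$ by the strict subsolution $u_\sigma=(1-\sigma)u-\sigma(1+M_\ell/\lambda)$, localise using the finite speed $M_f$ and the discount $\lambda>0$ to reduce to bounded domains, patch the local comparison statements, and let $\sigma\to0$. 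Applying $u\le w$ to $v_{\eta,\epsilon}$ and to any other bounded solution (each being both sub- and supersolution) then gives uniqueness.

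The obstacle I expect to dominate the work is concentrated in the comparison step: $\cH_{\eta,\epsilon}$ is discontinuous across $\Gamma_{\eta,\epsilon}$, and $\Gamma_{\eta,\epsilon}$ is a non-smooth oscillatory curve — it has cusp-like points where $g'$ blows up and vertical segments coming from the multivalued step function $G$ — so the flattening and doubling-of-variables arguments must be localised carefully and adapted to each geometric feature. Assumptions [H2] and [H3] are precisely what make this possible: [H3] supplies the coercivity of $H^i$ (hence the Lipschitz bound and the nondegeneracy of the controllability transverse to $\Gamma_{\eta,\epsilon}$), while [H2], the closedness and convexity of the sets $\FL^i$, guarantees that $H^i$ and $H_{\Gamma_{\eta,\epsilon}}^{\pm,i}$ are the correct Hamiltonians for the optimal-control interpretation used in the existence step to be valid.
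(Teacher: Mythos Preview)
The paper does not prove Theorem~\ref{existence-epsilon_a}: it simply states the result and cites \cite{oudet2014,imbert:hal-01073954}. Your sketch follows exactly the route of those references (dynamic programming principle for existence, comparison principle for uniqueness, with the interface treated via the reduced test-functions of Imbert--Monneau or the optimal-control arguments of Barles--Briani--Chasseigne), so in that sense your proposal is aligned with what the paper has in mind.

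There is, however, a geometric misreading that you should correct, because it makes you anticipate difficulties that are not there. You describe $\Gamma_{\eta,\epsilon}$ as ``a non-smooth oscillatory curve'' with ``cusp-like points where $g'$ blows up and vertical segments''. In fact the paper explicitly calls $\Gamma_{\eta,\epsilon}$ a $\cC^2$ curve: the hypotheses on $g$ --- namely $g(a)=g(b)=0$, $g'\to\pm\infty$ at $a,b$, and $g''/g'\to 0$ there --- are imposed precisely so that the graph portions meet the flat segments contributed by $G$ in a $\cC^2$ fashion (the curve becomes tangent to the horizontal line $\{x_2=\eta\epsilon a\}$ with vanishing curvature as it joins the segment). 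Moreover, since $g_{\eta,\epsilon}$ expresses $x_1$ as a (multivalued) function of $x_2$, the segments arising from the jumps of $G$ are at constant $x_2$, i.e.\ horizontal in the $(x_1,x_2)$ plane, not vertical; this is consistent with the normal $n_{\eta,\epsilon}=\mp e_2$ recorded in the paper at those points. Consequently the local flattening you propose for the comparison argument works uniformly along $\Gamma_{\eta,\epsilon}$: there are no corners or cusps requiring a separate junction-type analysis, and the doubling-of-variables with an Imbert--Monneau vertex test-function goes through as in the flat-interface case after a local $\cC^2$ change of coordinates. Your remark that ``the obstacle I expect to dominate the work'' lies in handling non-smooth points of the interface is therefore based on a misreading; the actual difficulty in \cite{oudet2014,imbert:hal-01073954} is purely the discontinuity of the Hamiltonian across a smooth hypersurface.
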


\subsection{The main result and the general orientation}
\label{sec:main-result-main}
We set 
\begin{equation}
  \label{eq:104}
\Omega^L=\{x\in \R^2, x_1<0\}, \quad \Omega^R=\{x\in \R^2, x_1>0\}, \quad  \Gamma=\{x\in \R^2, x_1=0\}.
\end{equation}
\paragraph{Informal statement of the main result}
Our main result, namely Theorem \ref{sec:main-result-4} below,
 is that, as $\epsilon\to 0$, $v_{\epsilon,\epsilon}$ converges locally uniformly  to $v$, the unique bounded viscosity solution of 
  \begin{eqnarray}
    \label{eq:14}
    \lambda v(z)+ H^L( D v(z))  = 0 & &\hbox{if } z\in \Omega^L,\\
    \label{eq:15}
    \lambda v(z)+ H^R( D v(z))  = 0 & &\hbox{if } z\in \Omega^R,\\ 
 \label{eq:17}    \lambda v(z)+\max\left(E(\partial_{z_2}v(z)), H^{L,R}( D v^L(z),D v^R(z)) \right) = 0  & &\hbox{if } z\in \Gamma.
  \end{eqnarray}
The  Hamiltonians $H^L$ and $H^R$ are defined in (\ref{eq:7}).
In the effective transmission condition (\ref{eq:17}),
 \begin{equation}
\label{eq:25}
 H^{L,R}( p^L,p^R) = \max \{ \;H^{+,1,L}(p^L),H^{-,1,R} (p^R)\},
\end{equation}
for   $p^L, p^R \in \R^2 $.  For $i=L,R$,
 $H^{+,1,i}(p)$ (respectively $H^{-,1,i}(p)$) is the nondecreasing (respectively  nonincreasing) part of the Hamiltonian $H^i$ with respect to $p_1$. In what follows, $p^L -p^R$ will be colinear to $e_1$.
 The effective flux-limiter  $E: \R\to \R$ will be characterized in \S \ref{sec:second-passage-limit} below. 
\\
For  brevity, the problem in (\ref{eq:14})-(\ref{eq:17}) will  sometimes be noted
\begin{equation}\label{eq:16}
  \lambda v(z)+{\cal{H}}(z, Dv(z))=0.
\end{equation}
\paragraph{General orientation}
The proof of this result will be done by using Evans' method of perturbed test-functions, see \cite{MR1007533}. Such a method requires to build a family of correctors depending on a single real variable $p_2$ (which stands for the derivative of $v$ along $\Gamma$).
The corrector, that will be noted $\xi_\epsilon(p_2,\cdot)$, solves a cell problem,  see (\ref{eq:48}) below,
 with a transmission condition on the interface $\Gamma_{1,\epsilon}$, (note that the original geometry 
 is dilated by a factor $1/\epsilon$). The  ergodic constant associated to the latter cell problem will be noted $E_\epsilon(p_2)$ in \S~\ref{sec:simult-pass-limit}. The fact that the corrector and the ergodic constant still depend on $\epsilon$ is connected to the existence of two small scales in the problem.

The existence of the pairs  $(\xi_\epsilon(p_2),E_\epsilon(p_2) )$ and the asymptotic behavior of
 $\xi_\epsilon(p_2)$ as $\epsilon\to 0$ will be obtained essentially by using the arguments
 proposed in \S \ref{sec:effect-probl-obta} below. In fact, for a reason that will soon become clear,  in \S \ref{sec:effect-probl-obta}, we consider the interfaces $\Gamma_{\eta,\epsilon}$ instead of $\Gamma_{1,\epsilon}$, for a fixed arbitrary  positive parameter $\eta$. Then, the  region where the two media are mixed is  a strip whose width is $\sim \eta$, see Figure~\ref{fig:geom2}: in this region, an effective Hamiltonian is found by  classical  homogenization techniques; the main achievement of \S  \ref{sec:effect-probl-obta} is to obtain the effective transmission conditions on the  boundaries of the strip (two parallel straight lines) and to prove  the convergence of the solutions of the transmission problems as $\epsilon\to 0$. 
\\
Next, in \S~\ref{sec:second-passage-limit}, we pass to the limit as $\eta\to 0$ in the effective problem that we have just obtained in \S  \ref{sec:effect-probl-obta}. We obtain (\ref{eq:16}) at the limit $\eta\to 0$, whose solution is unique, (the well posedness of (\ref{eq:16}) implies the convergence of the whole family of solutions as $\eta\to 0$) and a characterization of $E(p_2)$ in (\ref{eq:17}), see (\ref{eq:18}) below.

In the proof of the main result stated above and in Theorem~\ref{sec:main-result-4}, 
 Evans' method  requires to study the asymptotic behavior of  $E_\epsilon(p_2)$ and  of $x\mapsto \epsilon \xi_\epsilon(p_2, x/\epsilon)$  as $\epsilon \to 0$. This is precisely what is done in \S~\ref{sec:simult-pass-limit} below, where, in particular, we prove that $\lim_{\epsilon\to 0} E_\epsilon(p_2)= E(p_2)$ given by (\ref{eq:18}).

 Therefore, we will prove that the limit of $v_{\epsilon,\epsilon}$  as $\epsilon\to 0$ can be obtained by
 considering the transmission problems with interfaces $\Gamma_{\eta,\epsilon}$, letting $\epsilon$ tend to $0$ first, then $\eta$ tend to  $0$.
 This is coherent with the intuition that since the two scales $\epsilon^2$ and $\epsilon$ are well separated, 
 the asymptotic behavior can be obtained in two successive steps.

In what follows, a significant difficulty in the construction of the correctors is
the unboundedness of the domains in which they should be defined.
 It is addressed by using the ideas proposed in \cite{MR3299352,MR3441209,MR3565416}.

\section{The effective problem obtained by letting $\epsilon$ tend to $0$}
\label{sec:effect-probl-obta}
In \S~\ref{sec:effect-probl-obta}, $\eta$ is a fixed positive number, whereas $\epsilon$ tends to $0$. 
\subsection{Main result}\label{sec:main-result}
Let the domains $\Omega^{L}_{\eta}$, $\Omega^{M}_{\eta}$ and $\Omega^{R} _{\eta}$ and the straight lines $\Gamma^{L,M}_\eta$,  $\Gamma^{M,R}_\eta$  be defined by 
\begin{eqnarray}
  \label{eq:32}
    \Omega^{L}_{\eta}=\{x\in \R^2, x_1<-\eta\}, \quad   \Omega^{M}_{\eta}=\{x\in \R^2, |x_1|<\eta\},\quad   \Omega^{R}_{\eta}=\{x\in \R^2, x_1>\eta\},\\
\label{eq:33}
    \Gamma^{L,M}_\eta=\{x\in \R^2, x_1=-\eta\}, \quad    \Gamma^{M,R}_\eta=\{x\in \R^2, x_1=\eta\}.
\end{eqnarray}

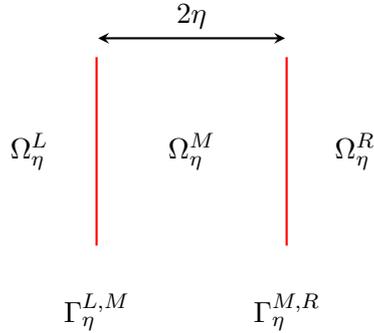
\begin{figure}[H]
  \begin{center}
    \begin{tikzpicture}[scale=0.5, trans/.style={thick,<->,shorten >=2pt,shorten <=2pt,>=stealth} ]
      \draw[red,thick] (0,0) -- (0,5);
      \draw[red,thick] (5,0) -- (5,5);
      \draw[trans] (-0.1,5.5) -- (5.1,5.5) ;
      \draw (2.5,5.5)  node[above]{$2\eta$};
      \draw (-1,2.5)  node[left]{$\Omega^L_{\eta}$};
      \draw (6,2.5)  node[right]{$\Omega^R_{\eta}$};
      \draw (2.5,2.5)  node[]{$\Omega^M_{\eta}$};
      \draw (0,-1)  node[below]{$\Gamma^{L,M}_{\eta}$};
      \draw (5,-1)  node[below]{$\Gamma^{M,R}_{\eta}$};
    \end{tikzpicture}
    \caption{The geometry of the asymptotic problem when $\epsilon\to 0$}
    \label{fig:geom2}
  \end{center}
\end{figure}

\begin{theorem}\label{th:convergence_result}
   As $\epsilon\to 0$, $v_{\eta,\epsilon}$ converges locally uniformly  to $v_\eta$ the unique bounded viscosity solution of 
  \begin{eqnarray}
    \label{def:HJeffective1}
    \lambda v(z)+ H^L( D v(z))  = 0 & &\hbox{if } z\in \Omega^L_{\eta},\\
    \label{def:HJeffective2}
    \lambda v(z)+ H^M( D v(z))  = 0  & &\hbox{if } z\in \Omega^M_{\eta},\\ 
    \label{def:HJeffective3}
    \lambda v(z)+ H^R( D v(z))  = 0 & &\hbox{if } z\in \Omega^R_{\eta},\\  
    \label{def:HJeffective4}
    \lambda v(z)+\max\left(E^{L,M}(\partial_{z_2}v(z)), H^{L,M}( D v^L(z),D v^M(z)) \right) = 0  & &\hbox{if } z\in \Gamma^{L,M}_\eta,\\ 
    \label{def:HJeffective5}
    \lambda v(z)+\max\left(E^{M,R}(\partial_{z_2}v(z)), H^{M,R}( D v^M(z),D v^R(z)) \right) = 0  & &\hbox{if } z\in \Gamma^{M,R}_\eta,
  \end{eqnarray}
where $v^L$ (respectively $v^M$,$v^R$) stands for $v|_{\overline{\Omega^L_{\eta}}}$ 
(respectively $v|_{\overline{\Omega^M_{\eta}}}$, $v|_{\overline{\Omega^R_{\eta}}}$),
and  that we note for short
\begin{equation}\label{def:HJeffective_short}
  \lambda v(z)+{\cal{H_\eta}}(z, Dv(z))=0.
\end{equation}
The  Hamiltonians $H^L$ and $H^R$ are defined in (\ref{eq:7}). The effective Hamiltonian $H^M$ will be defined in \S~\ref{sec:effect-hamilt-omeg}  below (note that $p\mapsto H^M (p)$ is convex).
In (\ref{def:HJeffective4}), \begin{equation}
    \label{eq:28}
 H^{L,M}(p^L,p^M) = \max \{ \;H^{+,1,L}(p^L),H^{-,1,M} (p^M)\},
\end{equation}
for   $p^L\in \R^2 $ and $p^M \in \R^2$ (in what follows, $p^L -p^M$ is colinear to $e_1$) and, for $i=L,M,R$, $p\mapsto H^{+,1,i}(p)$ (respectively $p\mapsto H^{-,1,i}(p)$) is the nondecreasing (respectively  nonincreasing) part of the Hamiltonian $H^i$ with respect to the first coordinate $p_1$ of $p$.
In  (\ref{def:HJeffective5}),  
\begin{equation}
\label{eq:31}
 H^{M,R}( p^M,p^R)  = \max \{ \;H^{+,1,M}(p^M),H^{-,1,R} (p^R)\},
\end{equation}
for  $p^M\in \R^2 $ and $p^R \in \R^2$.
\\
The effective flux limiters $E^{L,M}$ and $E^{M,R}$ will be defined in \S~\ref{sec:ergod-const-state}.
\end{theorem}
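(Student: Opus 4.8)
The plan is to prove Theorem~\ref{th:convergence_result} by Evans' method of perturbed test-functions, following the strategy already announced in \S\ref{sec:main-result-main} and adapted from \cite{MR3299352,MR3441209,MR3565416}. The first step is to establish the building blocks: the effective Hamiltonian $H^M$ in the mixing strip $\Omega^M_\eta$ via the classical Lions--Papanicolaou--Varadhan cell problem \cite{LPV} (since $f^i,\ell^i$ are $x$-independent, $H^M$ is simply the convex hull construction $H^M(p)=\max\{H^{-,1,L}(p)\text{-type terms}\}$ obtained from the two media alternating at scale $\epsilon$ in a strip of width $2\eta$), and the two boundary flux limiters $E^{L,M}$ and $E^{M,R}$ as ergodic constants of cell problems set on half-strips with the transmission condition on $\Gamma_{\eta,\epsilon}$ (rescaled by $1/\epsilon$). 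Because these half-strip cell domains are \emph{unbounded} in the $e_1$-direction, the correctors must be constructed as limits of an increasing sequence of truncated problems; establishing their existence together with the right growth/boundedness properties and the control of the truncation error is, I expect, the main technical obstacle, and this is presumably why some of it is deferred to the appendix.

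Once $H^M$, $E^{L,M}$, $E^{M,R}$ and the associated correctors are in hand, I would argue the convergence in two halves. For the \emph{upper bound} (showing $\limsup^* v_{\eta,\epsilon}$ is a subsolution of \eqref{def:HJeffective_short}): away from the two limiting lines this is the standard perturbed test-function computation using the interior cell correctors; at a point $z\in\Gamma^{L,M}_\eta$ (and symmetrically $\Gamma^{M,R}_\eta$) one takes an admissible test-function $\phi$ for the limit problem, perturbs it by $\epsilon\,\xi(\partial_{z_2}\phi(z),\cdot/\epsilon)$ where $\xi$ is the half-strip corrector, and uses the viscosity-subsolution inequality \eqref{eq:5bis}--\eqref{eq:5bisgamma} for $v_{\eta,\epsilon}$ at a maximum point of $v_{\eta,\epsilon}-\phi^\epsilon$ nearby; the reduced set of test-functions from \cite{imbert:hal-01073954} (a single test-function at the interface) is what makes this tractable, so I would invoke that characterization to reduce to checking the inequality only against the $\max(E,\,H^{L,M})$ form. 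The \emph{lower bound} (showing $\liminf_* v_{\eta,\epsilon}$ is a supersolution) is symmetric, using the corrector in the opposite sense; a point to be careful about is that at the interface the supersolution test may involve either the $E$-term or the $H^{L,M}$-term, and one must produce the perturbed test-function adapted to whichever alternative is active.

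A separate ingredient I would need is compactness: by Proposition~\ref{sec:assumption} the family $v_{\eta,\epsilon}$ is uniformly bounded, and using assumption [H3] ($B(0,\delta_0)\subset F^i$) one gets a uniform Lipschitz-type or at least uniform-continuity estimate (via the standard controllability argument bounding $v$ by the cost of reaching a ball), so that $\limsup^* v_{\eta,\epsilon}=\liminf_* v_{\eta,\epsilon}$ is automatic once both one-sided viscosity inequalities are proved; alternatively the half-relaxed limits method lets one avoid equicontinuity and close the argument purely through the comparison principle. The final step is to invoke well-posedness of the limit problem \eqref{def:HJeffective_short} — existence and a comparison principle for \eqref{def:HJeffective1}--\eqref{def:HJeffective5}, which is of the type established in \cite{oudet2014,imbert:hal-01073954} for a flat interface, here applied on each of the two lines $\Gamma^{L,M}_\eta,\Gamma^{M,R}_\eta$ — to conclude that the relaxed half-limits coincide with the unique bounded viscosity solution $v_\eta$, and that the convergence is locally uniform. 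I expect the bulk of the work, and the place where the two-scale nature really bites, to be the corrector construction on the unbounded half-strips and the verification that the ergodic constants $E^{L,M},E^{M,R}$ are well defined and yield exactly the flux-limited transmission conditions \eqref{def:HJeffective4}--\eqref{def:HJeffective5}.
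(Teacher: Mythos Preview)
Your proposal is correct and follows essentially the same route as the paper: half-relaxed limits, Evans' perturbed test-function method with the half-strip correctors $\chi(p_2,\cdot)$ (built as limits of truncated cell problems), the reduced set of test-functions from \cite{imbert:hal-01073954} at the interfaces, and the comparison principle from \cite{oudet2014,imbert:hal-01073954} to conclude. The one technical point you underweight is that the comparison between $\varphi^\epsilon=\psi+\epsilon\chi(\cdot/\epsilon)$ and the reduced test-function $\varphi$ on $\partial B(\bar z,r)$ (Step~2 in the paper) hinges on precise control of the asymptotic slopes of the rescaled corrector $W(p_2,\cdot)=\lim_\epsilon \epsilon\chi(p_2,\cdot/\epsilon)$, namely Propositions~\ref{cor:slopes_omega} and~\ref{cor:control_slopes_W}; this is exactly what allows the specific choice $\Pi=(\widehat\Pi^M,\overline\Pi^R)$ of slopes in the reduced test-function to close the argument.
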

Let us list the  notions which are needed by Theorem~\ref{th:convergence_result}  and give a few comments:
\begin{enumerate}
\item Problem~(\ref{def:HJeffective_short}) is a transmission problem across the interfaces  $\Gamma^{L,M}_\eta$ and  $\Gamma^{M,R}_\eta$,
 with the respective effective transmission conditions~(\ref{def:HJeffective4}) and (\ref{def:HJeffective5}).
 The notion of viscosity solutions of (\ref{def:HJeffective_short}) 
is similar to the one defined for problem (\ref{HJaepsilon}). 
\item Note that the Hamilton-Jacobi equations in $\Omega^{L}_{\eta}$ and $\Omega^{R}_{\eta}$ are  directly inherited from (\ref{eq:5bis}): this is quite natural, 
since the Hamilton-Jacobi equation at $x\in  \Omega^{L}_{\eta}$ and $x\in  \Omega^{L}_{\eta}$ does not depend on $\epsilon$ if $\epsilon$ is small enough.
\item The effective Hamiltonian  $H^M(p)$ arising in $\Omega^M_\eta$ will be found by solving  classical  one dimensional
 cell-problems in the fast vertical variable $y_2=x_2/\epsilon$. The cell problems are one dimensional,
 since  
for  any interval $I$
such that $ I \subset\subset (-\eta, \eta)$,   
 $  \Gamma_{\eta,\epsilon} \cap (I\times \R)$  is made of straight horizontal lines as soon as $\epsilon$ is small enough.
\item  The Hamiltonian  $H^{L,M}$  appearing in the effective transmission condition at the interface $\Gamma^{L,M}_\eta$   is built by considering
 only the effective dynamics related to $ \Omega^{i,\eta}$ which point from $\Gamma^{L,M}_\eta$
 toward $\Omega^{i,\eta}$, for $i=L,M$. The same remark holds for $H^{M,R}$ mutatis mutandis.

\item The effective flux limiters $E^{L,M}$ and $E^{M,R}$ are the only ingredients in the effective problem that  keep track of the function $g$.
 They are  constructed in \S~\ref{sec:ergod-const-state} and \ref{sec:passage-limit-as} below, see (\ref{eq:def_E}),  as 
the limit of a sequence of ergodic constants related to larger and larger domains bounded in the horizontal direction. This is reminiscent of a construction first performed in \cite{MR3299352} for 
 singularly perturbed problems in optimal control leading to Hamilton-Jacobi equations posed on a network. Later, similar constructions were used in \cite{MR3441209,MR3565416}.
\item For proving  Theorem~\ref{th:convergence_result}, 
 the chosen strategy is reminiscent of  \cite{MR3441209},
 because it relies on the construction of a single corrector,
 whereas the method proposed in  \cite{MR3299352} requires the construction of an infinite family of correctors.  This will be done in \S~\ref{sec:proof-theor-refth:c} and the slopes at infinity of the correctors will be studied in \S~\ref{sec:asympt-valu-slop}. 
\end{enumerate}

\subsection{The effective Hamiltonian  $H^M$}\label{sec:effect-hamilt-omeg}
The first step in understanding the asymptotic behavior of the value function $v_{\eta,\epsilon}$ as $\epsilon\to 0$ is to look at what happens
in $\Omega^M$, i.e. in the region where $|x_1| <\eta$.  For that, it is possible to rely on existing results, see \cite{LPV} for the first work on the topic. 
In $\Omega^M$, if the sequence of value functions  $v_{\eta,\epsilon}$ converges to $v_\eta$ uniformly as $\epsilon\to 0$, then
$v_\eta$ is a viscosity solution of  a  first order partial differential equation involving an effective Hamiltonian noted $H^M$ in (\ref{def:HJeffective2}) and in the rest of the paper. 
The latter will be obtained by solving a one-dimensional periodic boundary value problem in the fast variable  $y\in \R$, usually 
named {\sl a cell problem}. Before stating the result, it is convenient to introduce 
 the open sets $Y_\eta ^L =   \left(\eta a,  \eta b \right)+ \eta \Z$ and $Y_\eta ^R= \R \setminus \overline {Y_\eta ^L}$, and the discrete sets 
$\gamma_\eta^{a}=  \{\eta a\}+ \eta \Z$, $\gamma_\eta^{b}=  \{\eta b\}+ \eta \Z$.
 For $p\in \R^2$, $i=L, R$, we also define the Hamiltonians:
\begin{eqnarray}
  H^{-,2,i}(p) &=& \max_{\alpha \in A^i,  f^i(\alpha ) \cdot e_2 \ge 0 } (-p \cdot f^i(\alpha ) -\ell^i(\alpha )),\\
  H^{+,2,i}(p) &=& \max_{\alpha \in A^i,  f^i(\alpha ) \cdot e_2 \le 0 } (-p \cdot f^i(\alpha ) -\ell^i(\alpha )).
\end{eqnarray}
Note that $H^{+,2,i}(p)$ (respectively $H^{-,2,i}(p)$) is the nondecreasing (respectively  nonincreasing)
 part of the Hamiltonian $p\mapsto H^i(p)$ with respect to the second coordinate $p_2$ of $p$.
\begin{proposition}
  \label{sec:effect-hamilt-omeg-1}
 For any  $p\in \R^2$   there exists a  unique real number $H^M(p)$ such that the  following one dimensional
  cell-problem has a Lipschitz continuous viscosity  solution $\zeta(p,\cdot)$:
 \begin{eqnarray}
   H^R\left (  p+  \frac {d\zeta}{dy}(y) e_2 \right) = H^M(p), \quad   \hbox{  if } y\in Y_\eta ^R ,\\
   H^L\left ( p+  \frac {d\zeta}{dy}(y) e_2 \right) = H^M(p), \quad   \hbox{  if } y \in Y_\eta ^L,\\
 \max\left(   H^{+,2,R} \left (  p+  \frac {d\zeta}{dy} (y ^-) e_2 \right),   H^{-,2,L}\left (  p+  \frac {d\zeta}{dy}(y ^+) e_2 \right) \right)= H^M(p),  
  \hbox{  if } y\in \gamma_\eta^{a} ,\\
 \max\left(   H^{+,2,L}\left ( p+  \frac {d\zeta}{dy}  (y^-) e_2 \right),   H^{-,2,R}\left (  p+  \frac {d\zeta}{dy}  (y ^+) e_2 \right) \right)= H^M(p),
  \hbox{  if }   y\in \gamma_\eta^{b},\\
 \zeta \hbox{ is periodic in }y \hbox{ with period }\eta.
 \end{eqnarray}
\end{proposition}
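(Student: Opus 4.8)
The plan is to realize the pair $(H^M(p),\zeta(p,\cdot))$ as the limit of solutions of a regularized (discounted) cell problem, following the classical approach of Lions--Papanicolaou--Varadhan adapted to the transmission setting of Imbert--Monneau. For fixed $p\in\R^2$ and $\mu>0$, consider the $\eta$-periodic problem on $\R$:
\begin{equation*}
  \mu\, w_\mu(y) + \widehat H\!\left(y,\, p+\tfrac{dw_\mu}{dy}(y)\, e_2\right) = 0,
\end{equation*}
where $\widehat H(y,\cdot)$ equals $H^R$ on $Y_\eta^R$, $H^L$ on $Y_\eta^L$, and on the interface points $\gamma_\eta^a$, $\gamma_\eta^b$ is given by the appropriate $\max$ of the one-sided Hamiltonians $H^{\pm,2,i}$ (exactly the junction Hamiltonian of Imbert--Monneau for a network whose edges are the intervals of $Y_\eta^L$ and $Y_\eta^R$ and whose vertices are $\gamma_\eta^a\cup\gamma_\eta^b$). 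By the existence and comparison results recalled in \S\ref{sec:hamilt-jacobi-equat} (applied on the one-dimensional periodic junction, which is a special case), this problem has a unique bounded continuous viscosity solution $w_\mu$. Assumption [H3], namely $B(0,\delta_0)\subset F^i$, gives the coercivity bound $H^i(p)\ge \delta_0|p|-M_\ell$, which yields a uniform Lipschitz bound on $w_\mu$ independent of $\mu$ (the standard argument: a subsolution is Lipschitz with constant controlled by coercivity, and by comparison $w_\mu$ lies between explicit sub/supersolutions, so $\|\mu w_\mu\|_\infty$ is bounded).

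Next I would pass to the limit $\mu\to 0$. From $\|\mu w_\mu\|_\infty\le C$ and the uniform Lipschitz bound, set $\zeta_\mu(y)=w_\mu(y)-w_\mu(0)$; along a subsequence $\mu w_\mu(0)\to -H^M(p)$ for some real number, and $\zeta_\mu\to\zeta$ locally uniformly by Arzel\`a--Ascoli, with $\zeta$ $\eta$-periodic and Lipschitz. Stability of viscosity solutions (including at the junction points, using the Imbert--Monneau reduced set of test functions, which is precisely why that characterization is invoked in the introduction) shows that $(\zeta,H^M(p))$ solves the announced cell problem. Uniqueness of the constant $H^M(p)$ is the usual argument: if $(\zeta_1,\lambda_1)$ and $(\zeta_2,\lambda_2)$ were two solutions with $\lambda_1<\lambda_2$, then $\zeta_1$ is a subsolution and $\zeta_2$ a supersolution of the same equation with ergodic constant $\lambda_2$; comparison on the compact periodicity cell (periodicity removing boundary issues) forces $\zeta_1-\zeta_2$ to be bounded, and sliding $\zeta_1$ down by a large constant then up until it touches $\zeta_2$ produces a contradiction at the contact point via the subsolution/supersolution inequalities. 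Convexity of $p\mapsto H^M(p)$, asserted in the theorem, follows from the convexity of each $H^i$ together with the representation of $H^M$ as an inf-sup (or directly from the variational/control interpretation of the cell problem).

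The main obstacle is the correct handling of the transmission (junction) conditions at the points $\gamma_\eta^a$ and $\gamma_\eta^b$ throughout all three steps: one must verify that the regularized problem is well-posed as a genuine junction problem (existence by Perron's method with the junction Hamiltonian, uniqueness by the comparison principle of \cite{imbert:hal-01073954} specialized to this one-dimensional periodic network), and then that the stability of viscosity solutions under local uniform convergence holds \emph{including at the vertices}. The latter is exactly where the reduced-test-function characterization of \cite{imbert:hal-01073954} is essential: it suffices to test against a single one-parameter family of functions at each junction point, so passing to the limit in the vertex inequality is no harder than in the interior. A secondary technical point is checking that the uniform Lipschitz estimate survives at the vertices — here one uses that the vertex Hamiltonian dominates the relevant one-sided Hamiltonians, which retain the coercivity inherited from [H3], so no loss of control occurs at $\gamma_\eta^a\cup\gamma_\eta^b$.
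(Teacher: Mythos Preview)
Your proposal is correct and follows the standard Lions--Papanicolaou--Varadhan route (discounted approximation, uniform Lipschitz bounds from coercivity via [H3], Arzel\`a--Ascoli, stability including at junctions via the Imbert--Monneau theory, and the sliding argument for uniqueness of the ergodic constant). The paper itself does not give a proof of this proposition: it presents the statement as essentially known, pointing to \cite{LPV} for the classical construction and to the junction framework of \cite{imbert:hal-01073954,oudet2014} for the transmission conditions, so your write-up is precisely what the omitted argument would be. One minor remark: convexity of $p\mapsto H^M(p)$ is not part of the proposition you are proving but is recorded separately in Lemma~\ref{sec:effect-hamilt-hm}, so you can drop that sentence.
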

The following lemma contains information on $H^M$: we skip its proof because it is very much like the proof of  \cite[Lemma 4.16]{MR3565416}.
\begin{lemma}
\label{sec:effect-hamilt-hm}
The function $p\mapsto H^M(p)$ is convex.
 There exists a constant $C$  
such that for any 
$ p, p'\in \R^2$,
 \begin{eqnarray}
   \label{lem:E_lipschitz_wrt_p_2}
\mid H^M(p)-H^M (p') \mid \le C |p-p'|,
\\
 \label{lem:E_coercive}
 \delta_0|p|-C\le H^M (p)\le C |p|+C.
\end{eqnarray}
\end{lemma}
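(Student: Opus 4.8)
The plan is to derive the three assertions — convexity, Lipschitz continuity, and coercivity — directly from the variational characterization of $H^M(p)$ given by the cell problem in Proposition~\ref{sec:effect-hamilt-omeg-1}, using the standard optimal control interpretation of the ergodic constant. The key observation is that $H^M(p)$ admits a representation as an ergodic-type value: by integrating the cell problem over one period and using the structure of the Hamiltonians $H^L,H^R$ (each a sup over a compact control set of an affine function of $p$), one obtains a formula of the shape
\begin{equation}
\label{eq:Hm_rep_plan}
H^M(p) = \sup \Bigl\{ -p\cdot \overline{f} - \overline{\ell} \Bigr\},
\end{equation}
where the supremum is taken over all admissible ``averaged'' dynamics--cost pairs $(\overline f,\overline\ell)$ obtained by combining, in the appropriate proportions dictated by the periodic geometry $Y_\eta^L,Y_\eta^R$, the dynamics $f^i(a)$ and costs $\ell^i(a)$ for admissible controls $a$ (respecting the transmission constraints at $\gamma_\eta^a,\gamma_\eta^b$). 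Since each term $-p\cdot\overline f-\overline\ell$ is affine in $p$, the supremum is automatically convex in $p$, which gives the first claim.

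For the Lipschitz bound \eqref{lem:E_lipschitz_wrt_p_2}, I would argue directly on the cell problem without needing the full representation \eqref{eq:Hm_rep_plan}: fix $p,p'$ and let $\zeta(p,\cdot),\zeta(p',\cdot)$ be the associated periodic Lipschitz solutions with ergodic constants $H^M(p),H^M(p')$. Because each $H^i$ satisfies $|H^i(q)-H^i(q')|\le M_f|q-q'|$ (immediate from $[H0]$, as $H^i$ is a sup of functions that are $M_f$-Lipschitz in the gradient variable), the function $\zeta(p,\cdot)$ is an approximate sub/supersolution of the cell problem associated with $p'$, with an error controlled by $M_f|p-p'|$; a comparison argument on the periodic cell problem (or equivalently evaluating at a point where $\frac{d\zeta}{dy}$ realizes its extreme behaviour, exploiting periodicity so that $\zeta(p,\cdot)-\zeta(p',\cdot)$ attains an interior max/min) then yields $|H^M(p)-H^M(p')|\le M_f|p-p'|$, giving $C=M_f$. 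One must be a little careful at the junction points $\gamma_\eta^a,\gamma_\eta^b$ where the equation involves one-sided derivatives and the partial Hamiltonians $H^{\pm,2,i}$, but these also inherit $M_f$-Lipschitz dependence on the gradient, so the same estimate goes through.

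For the coercivity \eqref{lem:E_coercive}, the upper bound $H^M(p)\le C|p|+C$ follows from $|H^i(q)|\le M_f|q|+M_\ell$ together with a uniform Lipschitz bound on the derivative $\frac{d\zeta}{dy}(p,\cdot)$ (which in turn follows from $[H3]$: the coercivity $B(0,\delta_0)\subset F^i$ forces $H^i(q)\ge \delta_0|q|-M_\ell$, so an unbounded gradient would force $H^M(p)=+\infty$, impossible). The lower bound $H^M(p)\ge\delta_0|p|-C$ uses the same coercivity of each $H^i$: integrating the relation $H^i\bigl(p+\frac{d\zeta}{dy}e_2\bigr)=H^M(p)$ and using that $\frac{d\zeta}{dy}e_2$ is a gradient of a periodic function (so its average is zero, hence it cannot uniformly cancel the $e_1$-component of $p$, and on average $|p+\frac{d\zeta}{dy}e_2|\gtrsim|p|$ in a suitable integrated sense), one obtains $H^M(p)\ge\delta_0|p_1| - C$; the $p_2$ direction is handled by testing the cell problem against the constant-slope ansatz or by a direct lower bound at a point where $\frac{d\zeta}{dy}=0$ (such a point exists by periodicity). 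The main obstacle is the careful bookkeeping at the singular junction points $\gamma_\eta^a,\gamma_\eta^b$, where one must check that the one-sided Hamiltonians $H^{\pm,2,i}$ do not destroy the coercivity — but since $H^{-,2,i}(p)\ge H^{-,2,i}$ evaluated along the admissible directions still dominates a multiple of $|p|$ by $[H3]$ (the ball $B(0,\delta_0)$ contains dynamics in both $e_2$-half-planes), the argument closes. As the paper notes, all of this is essentially identical to \cite[Lemma 4.16]{MR3565416}, so I would ultimately refer the reader there for the routine details after indicating these main points.
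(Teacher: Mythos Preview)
Your proposal is correct and aligned with the paper: the paper omits the proof entirely, deferring to \cite[Lemma~4.16]{MR3565416}, and your sketch---convexity via the sup-of-affine representation, Lipschitz continuity from the $M_f$-Lipschitz dependence of each $H^i$ on the gradient, and coercivity from $[H3]$ together with the zero mean of the periodic corrector's derivative---outlines precisely the standard arguments one expects in that reference, to which you also ultimately defer.
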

 As in \cite{MR3299352,MR3441209}, we introduce three functions $ E_0^i:  \R \to \R$, $i=L,M,R$, and two functions $ E^{L,M}_0: \R \to \R$    and  $ E^{M,R}_0 : \R \to \R$:
 \begin{eqnarray}
  \label{def:E_0^i}
  E_0^i(p_2)&=&\min\left\lbrace H^i(p_2e_2+qe_1), \quad q\in \R \right\rbrace,
  \\
   \label{def:ELM_0}
  E^{L,M}_0(p_2)&=&\max\left\lbrace  E_0^L(p_2), E_0^M(p_2)\right\rbrace, 
\\
   \label{def:EMR_0}
  E^{M,R}_0(p_2)&=&\max\left\lbrace  E_0^M(p_2), E_0^R(p_2)\right\rbrace.
 \end{eqnarray}
For $i=L,M,R$, $H^{+,1,i}(p)$ (respectively $H^{-,1,i}(p)$) is the nondecreasing (respectively  nonincreasing)
 part of the Hamiltonian $p\mapsto H^i(p)$ with respect to the first coordinate $p_1$ of $p$.
For $ p_2\in \R$,  there exists a unique pair of real numbers $p^{-,i}_{1,0}(p_2)\le  p^{+,i}_{1,0}(p_2) $ such that
   \begin{eqnarray*}
 H^{-,1,i}(p_2 e_2+p_1 e_1) &=& \left\lbrace
 \begin{array}{ll}
 H^i(p_2 e_2+p_1e_1) & \mbox{ if } p_1\le p^{-,i}_{1,0}(p_2),\\
 E^i_0(p_2)  & \mbox{ if } p_1> p^{-,i}_{1,0}(p_2),
 \end{array}
 \right.    \\
 H^{+,1,i}(p_2 e_2+p_1e_1) &=& \left\lbrace
 \begin{array}{ll}
 E^i_0(p_2)  & \mbox{ if } p_1\le p^{+,i}_{1,0}(p_2),\\
 H^i(p_2 e_2+p_1e_1) & \mbox{ if } p_1> p^{+,i}_{1,0}(p_2).
 \end{array}
 \right.
  \end{eqnarray*}

\subsection{Truncated cell problems for the construction of the flux limiters $E^{M,R}$ and $E^{L,M}$}
\label{sec:ergod-const-state}
In what follows, we focus on the construction of  $E^{M,R}$ and on its properties, 
the construction of $E^{L,M}$ being completely symmetric.

\subsubsection{Zooming near the line $\Gamma^{M,R}_\eta$}\label{sec:state-constr-probl}
Asymptotically when $\epsilon\to 0$, the two lines  $\Gamma^{L,M}_\eta$ and $\Gamma^{M,R}_\eta$  appear very far from each other at the scale $\epsilon$.
 This is why we are going to introduce another geometry obtained by first zooming near 
$\Gamma^{M,R}_\eta$ at a scale $1/\epsilon$, then letting $\epsilon $ tend to $0$. 
\\
Let $\widetilde G$ be the multivalued  step function, periodic with period $1$,  such that
\begin{enumerate}
\item $\widetilde G(a)=\widetilde G(b)=[-\infty,0]$
\item $\widetilde G(t)=\{0\}$ if $t\in (a,b )$
\item $\widetilde G(t)=\{-\infty\}$ if $t \in [0,a )\cup(b,1]$
\end{enumerate}
Consider the  curve $\widetilde \Gamma_{\eta}$ defined as the graph of the multivalued function
 $\tilde g_{\eta}: x_2\mapsto \eta \widetilde G(\frac {x_2}{ \eta})+ \eta  g(\frac {x_2}{  \eta})$. 
We also define the domain $\widetilde \Omega_{\eta}^R $  (resp. $\widetilde \Omega_{\eta}^L $) as the epigraph (resp. hypograph) 
of  $\tilde g_{\eta}$:
\begin{eqnarray*}
  \widetilde \Omega_{\eta}^R= & \{ x\in \R^2 :   x_1> \tilde g_{\eta} (x_2)\},\\
  \widetilde \Omega_{\eta}^L= & \{ x\in \R^2 :   x_1< \tilde g_{\eta} (x_2)\}.
\end{eqnarray*}
The unit normal vector $\tilde n_{\eta}(x)$ at $ x\in \widetilde \Gamma_{\eta}$ is defined  as follows: setting $y_2= \frac { x_2}{\eta}$,
\begin{displaymath} 
\tilde n_{\eta}(x)= \left\{
  \begin{array}[c]{cl}
    \ds  
\left (1 + \left( g'(y_2)\right)^2 \right) ^{-1/2}  \left(  e_1 -   g' (y_2)   e_2\right)
  \quad &\hbox{ if    }\quad y_2 \notin  \Sp\\ 
\ds - e_2     \quad&\hbox{ if    }\quad y_2 = a \mod{1}\\

\ds  e_2     \quad&\hbox{ if    } \quad y_2 = b \mod{1}.
  \end{array}
\right.
\end{displaymath}
Note that $\tilde n_{\eta}(x)$ is  oriented from $\widetilde \Omega^L_{\eta}$ to $\widetilde \Omega^R_{\eta}$.

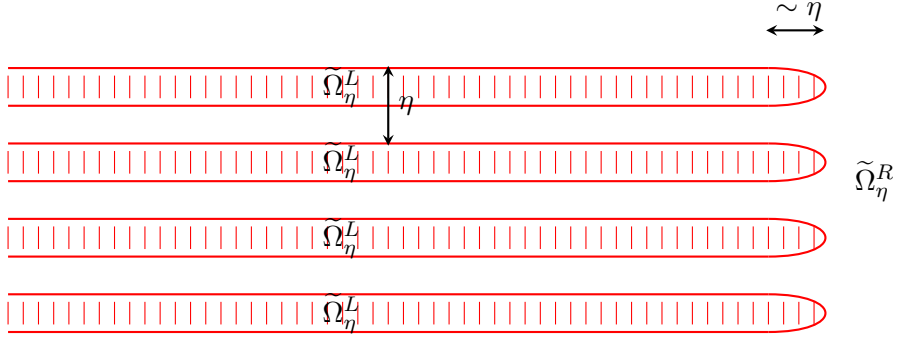
\begin{figure}[H]
  \begin{center}
    \begin{tikzpicture}[scale=1, trans/.style={thick,<->,shorten >=2pt,shorten <=2pt,>=stealth} ]
      \draw[red,thick] (0,10) -- (10,10);
      \draw[red,thick] (10,10) .. controls (11,10) and (11,9.5) .. (10,9.5);
      \draw[red,thick] (10,9.5) -- (0,9.5);
      \foreach \x in {0,...,53}
      \draw[red] (\x/5,9.6)--(\x/5,9.9);
      \draw (4,9.75)  node[right]{$\widetilde \Omega^L_{\eta}$};

      \draw[red,thick] (0,9) -- (10,9);
      \draw[red,thick] (10,9) .. controls (11,9) and (11,8.5) .. (10,8.5);
      \draw[red,thick] (10,8.5) -- (0,8.5);
      \foreach \x in {0,...,53}
      \draw[red] (\x/5,8.6)--(\x/5,8.9);
      \draw (4,8.75)  node[right]{$\widetilde \Omega^L_{\eta}$};

      \draw[red,thick] (0,8) -- (10,8);
      \draw[red,thick] (10,8) .. controls (11,8) and (11,7.5) .. (10,7.5);
      \draw[red,thick] (10,7.5) -- (0,7.5);
      \foreach \x in {0,...,53}
      \draw[red] (\x/5,7.6)--(\x/5,7.9);
      \draw (4,7.75)  node[right]{$\widetilde \Omega^L_{\eta}$};

      \draw[red,thick] (0,7) -- (10,7);
      \draw[red,thick] (10,7) .. controls (11,7) and (11,6.5) .. (10,6.5);
      \draw[red,thick] (10,6.5) -- (0,6.5);
      \foreach \x in {0,...,53}
      \draw[red] (\x/5,6.6)--(\x/5,6.9);
      \draw (4,6.75)  node[right]{$\widetilde \Omega^L_{\eta}$};

      \draw[trans] (9.9,10.5) -- (10.8,10.5) ;
      \draw (10.4,10.5)  node[above]{$\sim \eta$};

      \draw[trans] (5,10.1) -- (5,8.9) ;
      \draw (5,9.5)  node[right]{$\eta$};

      \draw (11,8.5)  node[right]{$\widetilde \Omega^R_{\eta}$};
    \end{tikzpicture}
    \caption{The  interface $\widetilde \Gamma_{\eta}$ separates the disconnected domain $\widetilde \Omega^L_{\eta}$ and the connected domain $\widetilde \Omega^R_{\eta}$.}
    \label{fig:geom3}
  \end{center}
\end{figure}

\subsubsection{State-constrained problem in truncated domains}\label{sec:state-constr-probl-1}
 We introduce the following Hamiltonians: 
\begin{eqnarray}
\label{eq:1}
H_{ \widetilde \Gamma_{\eta}}^{-,i} (p,y)= \max_{a\in A^i \hbox{ s.t. }   f^i(a)\cdot \tilde n_{\eta}(y)\ge 0} (-p\cdot f^i(a) -\ell^i(a)), \quad \forall y\in \widetilde \Gamma_{\eta}, \forall p\in \R^2,
\\
\label{eq:34}
H_{ \widetilde \Gamma_{\eta}}^{+,i} (p,y)= \max_{a\in A^i \hbox{ s.t. }   f^i(a)\cdot \tilde n_{\eta}(y)\le 0} (-p\cdot f^i(a) -\ell^i(a)), \quad \forall y\in \widetilde \Gamma_{\eta}, \forall p\in \R^2,
\end{eqnarray}
with  $\tilde n_{\eta}(y)$ defined in \S~\ref{sec:state-constr-probl}, and, for $p^L, p^R \in \R^2 $
\begin{equation}
  \label{eq:3}
H_{ \widetilde \Gamma_{\eta}} (p^L,p^R,y)= \max \{ \;H_{ \widetilde \Gamma_{\eta}}^{+,L}(p^L,y),H_{ \widetilde \Gamma_{\eta}}^{-,R} (p^R,y)\}.
\end{equation}
In what follows, $p^L -p^R$ will always be colinear to $\tilde n_{\eta}(y)$.
 For $\rho>0$, let us set $    Y^\rho=\{y:  |y_1| < \rho \}$. For $\rho$ large enough such that $\tilde \Gamma_\eta$ is
strictly contained in  $  \{y: y_1<\rho\}$,   consider the {\sl truncated cell problem}
\begin{equation}
\label{trunc-cellp}
\left\{
    \begin{array}[c]{lll}
    H^L(Du(y)+p_2e_2)&\le \lambda_\rho( p_2)&\hbox{ if } y\in   \widetilde \Omega_{\eta}^L \cap  Y^\rho   ,  \\
    H^L(Du(y)+p_2e_2)&\ge \lambda_\rho( p_2)&\hbox{ if } y\in   \widetilde \Omega_{\eta}^L \cap  \overline{Y^\rho}   ,  \\
    H^R(Du(y)+p_2e_2)&\le \lambda_\rho( p_2)&\hbox{ if } y\in   \widetilde \Omega_{\eta}^R \cap  Y^\rho   ,  \\
    H^R(Du(y)+p_2e_2)&\ge \lambda_\rho( p_2)&\hbox{ if } y\in   \widetilde \Omega_{\eta}^R \cap  \overline{Y^\rho}   ,  \\
    H_{ \widetilde \Gamma_{\eta}} (Du^L(y)+p_2e_2 ,   Du^R(y)+p_2e_2,y)&\le \lambda_\rho( p_2)&\hbox{ if } y\in   \widetilde \Gamma_{\eta} \cap  Y^\rho   ,  \\
    H_{ \widetilde \Gamma_{\eta}} (Du^L(y)+p_2e_2 ,   Du^R(y)+p_2e_2,y)&\ge \lambda_\rho( p_2)&\hbox{ if } y\in   \widetilde \Gamma_{\eta} \cap  \overline{Y^\rho} , \\
    u \hbox{ is 1-periodic w.r.t. } y_2/\eta,
    \end{array}
\right.
\end{equation}
where the inequations are understood in the sense of viscosity. 
\begin{lemma}
\label{lem:existence_solution_truncated_cell_pb}
There is a unique  $\lambda_\rho( p_2)\in \R$ such that \eqref {trunc-cellp} admits a viscosity solution. 
For this choice of  $\lambda_\rho( p_2)$, there exists a  solution  $\chi_\rho(p_2,\cdot)$
 which is Lipschitz continuous with  Lipschitz constant
 $L$ depending on $p_2$ only (independent of $\rho$).
\end{lemma}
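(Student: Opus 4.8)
The plan is to establish existence of the ergodic constant $\lambda_\rho(p_2)$ and of a Lipschitz solution by the classical device of perturbing the discount factor and passing to the limit, combined with an a priori gradient bound exploiting the coercivity assumption \textbf{[H3]}. Concretely, for $\delta>0$ consider the discounted truncated problem: find $w_\delta\in C(\overline{Y^\rho})$, $1$-periodic in $y_2/\eta$, solving $\delta w_\delta + H^i(Dw_\delta + p_2e_2)=0$ in $\widetilde\Omega^i_\eta\cap Y^\rho$ for $i=L,R$, with the transmission relation involving $H_{\widetilde\Gamma_\eta}$ on $\widetilde\Gamma_\eta\cap Y^\rho$, and a state-constraint boundary condition on $\{|y_1|=\rho\}$ (i.e. the equation holds as a supersolution up to the boundary, encoded by the inequalities with $\overline{Y^\rho}$ in \eqref{trunc-cellp}). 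This is precisely a transmission problem of the type \eqref{HJaepsilon} posed on a bounded-in-$y_1$, periodic-in-$y_2$ cell, for which existence and uniqueness of a continuous viscosity solution follow from the comparison principle of \cite{oudet2014,imbert:hal-01073954} adapted to the state-constrained setting (the state-constraint boundary on $\{|y_1|=\rho\}$ is standard, cf. Soner, and is compatible with the comparison argument since the dynamics pointing outward are simply excluded).

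The heart of the matter is the a priori Lipschitz estimate on $w_\delta$ with a constant $L=L(p_2)$ independent of both $\delta$ and $\rho$. First, \textbf{[H1]} gives $\|\delta w_\delta\|_\infty\le M_\ell$, so $\delta w_\delta$ is bounded uniformly. Next, coercivity \eqref{lem:E_coercive}-type bounds for $H^L,H^R$ (which hold because $B(0,\delta_0)\subset F^i$ by \textbf{[H3]}: indeed $H^i(q)\ge \delta_0|q|-M_\ell$) combined with $\delta w_\delta+H^i(Dw_\delta+p_2e_2)=0$ force $|Dw_\delta+p_2e_2|\le (M_\ell+\|\delta w_\delta\|_\infty)/\delta_0 +\text{(const)}$, hence a gradient bound depending only on $p_2$. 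The subtlety is that this pointwise argument only applies where $w_\delta$ is differentiable; the clean way to run it is the doubling-of-variables / Ishii–Lions technique for the transmission problem, which in this non-mixing interface setting has been carried out in \cite{MR3565416,oudet2014}: one shows that $w_\delta$ is globally Lipschitz by testing the sub/supersolution inequalities against $(y,y')\mapsto w_\delta(y)-w_\delta(y')-L|y-y'|$ and using coercivity on each side together with the structure of $H_{\widetilde\Gamma_\eta}$ at the interface. I would invoke the analogous lemma from \cite{MR3565416} rather than redo this; the only new point is uniformity in $\rho$, which is automatic since the constant produced depends only on $M_\ell$, $\delta_0$, $M_f$ and $p_2$.

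With the uniform Lipschitz bound in hand, set $m_\delta=\min_{\overline{Y^\rho}} w_\delta$ and $\tilde w_\delta = w_\delta - m_\delta$. Then $\tilde w_\delta$ is Lipschitz with constant $L(p_2)$, nonnegative, bounded (since $\mathrm{osc}(w_\delta)\le L(p_2)\cdot\mathrm{diam}(\overline{Y^\rho})$ and periodicity controls the $y_2$-direction, so in fact $\mathrm{osc}(\tilde w_\delta)\le 2\rho L(p_2)+ (\text{period})L(p_2)$), and $|\delta m_\delta|\le M_\ell$. By Ascoli–Arzelà, along a subsequence $\delta\to 0$, $\delta m_\delta\to -\lambda_\rho(p_2)$ for some real number $\lambda_\rho(p_2)$ and $\tilde w_\delta\to \chi_\rho(p_2,\cdot)$ uniformly; the stability of viscosity solutions of the transmission problem (again as in \cite{oudet2014}) gives that $\chi_\rho(p_2,\cdot)$ solves \eqref{trunc-cellp} with this $\lambda_\rho(p_2)$, and it inherits the Lipschitz constant $L(p_2)$. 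Uniqueness of $\lambda_\rho(p_2)$: if two constants $\lambda_\rho<\lambda_\rho'$ both admitted solutions $\chi,\chi'$, then $\chi$ is a subsolution and $\chi'$ a supersolution of the same problem at level, say, $\lambda_\rho'$ up to adding $\mathrm{const}$, and the comparison principle (or a direct argument: integrating the inequality $H^i(D\chi+p_2e_2)\le\lambda_\rho<\lambda_\rho'\le H^i(D\chi'+p_2e_2)$ along controlled trajectories that remain in $\overline{Y^\rho}$, exploiting \textbf{[H3]} to guarantee such trajectories exist and can be taken periodic) yields a contradiction. I expect this last uniqueness step and the precise handling of the state-constraint boundary in the comparison argument to be the main obstacle, though both are by-now standard in the cited literature.
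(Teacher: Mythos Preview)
Your proposal is correct and follows the standard route (discounted approximation $\delta w_\delta + H^i(Dw_\delta+p_2e_2)=0$ with state constraint, uniform Lipschitz bound from coercivity \textbf{[H3]}, passage to the limit $\delta\to 0$, uniqueness of the ergodic constant by comparison), which is precisely the method behind \cite[Lemma~4.6]{MR3565416} that the paper invokes in lieu of a proof. There is nothing to add: the paper itself does not give an argument here, and your outline matches what the cited reference does.
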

\begin{proof}
  We skip the proof of this lemma, since it is very much like that of \cite[ Lemma 4.6]{MR3565416}. 
\end{proof}
\subsection{ The effective flux limiter $E^{M,R}(p_2)$ and the global cell problem} 
\label{sec:passage-limit-as}
As in \cite{MR3299352,MR3565416},  using the optimal control interpretation of (\ref{trunc-cellp}), it is easy to prove that for 
a positive $K$ which may depend on  $p_2$ but not on $\rho$, and for 
all $0<\rho_1\le \rho_2$, 
\[\lambda_{\rho_1}( p_2)\leq \lambda_{\rho_2}( p_2)\leq K.\]
For $p_2\in \R$, the effective tangential Hamiltonian $E^{M, R}(p_2)$ is defined by 
\begin{equation}
  \label{eq:def_E}
  E^{M, R}(p_2)=\lim_{\rho\rightarrow \infty} \lambda_{\rho}( p_2).
\end{equation}
For  a fixed  $p_2\in\R$, the {\sl global cell-problem} reads
\begin{equation}
\label{cellpE}
\left\{
    \begin{array}[c]{lll}
    H^L(Du(y)+p_2e_2)&= E^{M, R}( p_2) &\hbox{ if } y\in   \widetilde \Omega_{\eta}^L   ,  \\
    H^R(Du(y)+p_2e_2)&= E^{M, R}( p_2) &\hbox{ if } y\in   \widetilde \Omega_{\eta}^R    ,  \\
    H_{ \widetilde \Gamma_{\eta}} (Du^L(y)+p_2e_2 ,   Du^R(y)+p_2e_2,y)&= E^{M, R}( p_2) & \hbox{ if } y\in   \widetilde \Gamma_{\eta}  ,  \\
    u \hbox{ is 1-periodic w.r.t. } y_2/\eta.
    \end{array}
\right.
\end{equation}
The following theorem is proved exactly as Theorem 4.8 in \cite{MR3565416}.
\begin{theorem}
\label{thm:stability_from_truncated_cell_pb_to_global_cell_pb}
Let $\chi_\rho( p_2,\cdot)$ be a sequence of uniformly Lipschitz continuous solutions of the truncated cell-problem \eqref{trunc-cellp} which converges to $ \chi( p_2,\cdot)$
locally uniformly in $\R^2$. Then  $ \chi(p_2,\cdot)$ is a Lipschitz continuous viscosity solution of the global cell-problem \eqref{cellpE}. 
 By subtracting  $\chi(p_2, 0)$ to $\chi_\rho(p_2,\cdot)$ and $\chi(p_2,\cdot)$, we may also assume that $\chi(p_2,0)=0$.
\end{theorem}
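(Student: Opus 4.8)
The plan is to run the classical stability argument for viscosity solutions, adapted to the transmission structure of \eqref{trunc-cellp}--\eqref{cellpE}, exactly along the lines of \cite[Theorem~4.8]{MR3565416}. First note that $E^{M,R}(p_2)$ in \eqref{eq:def_E} is well defined: by the monotonicity $\lambda_{\rho_1}(p_2)\le\lambda_{\rho_2}(p_2)$ for $0<\rho_1\le\rho_2$ and the uniform bound $\lambda_\rho(p_2)\le K$ recalled just above \eqref{eq:def_E}, the increasing family $\rho\mapsto\lambda_\rho(p_2)$ converges. Since each $\chi_\rho(p_2,\cdot)$ is Lipschitz with a constant $L=L(p_2)$ independent of $\rho$ (Lemma~\ref{lem:existence_solution_truncated_cell_pb}) and $\eta$-periodic in $y_2$, the locally uniform limit $\chi(p_2,\cdot)$ is Lipschitz with the same constant and $\eta$-periodic in $y_2$. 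It remains to check that $\chi(p_2,\cdot)$ satisfies the sub- and supersolution inequalities of \eqref{cellpE}.

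For the subsolution property, fix $y_0\in\R^2$ and an admissible test-function $\phi$ (continuous on $\R^2$, $\cC^1$ up to $\widetilde\Gamma_\eta$ from either side) such that $\chi(p_2,\cdot)-\phi$ has a local maximum at $y_0$; replacing $\phi$ by $\phi+|y-y_0|^2$ we may assume the maximum is strict on a closed ball $\overline{B}$ around $y_0$. Since the truncation boundary $\{|y_1|=\rho\}$ recedes to infinity, for $\rho$ large $\overline{B}\subset Y^\rho$, and by locally uniform convergence $\chi_\rho(p_2,\cdot)-\phi$ has a local maximum at some $y_\rho\in B$ with $y_\rho\to y_0$; as $y_\rho\in Y^\rho$, the subsolution lines of \eqref{trunc-cellp} are available at $y_\rho$. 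If $y_0$ lies in one of the open domains $\widetilde\Omega^L_\eta$ or $\widetilde\Omega^R_\eta$, then so does $y_\rho$ for $\rho$ large, $\phi$ is genuinely differentiable there, and $H^i(D\phi(y_\rho)+p_2e_2)\le\lambda_\rho(p_2)$ (with $i$ the corresponding side); letting $\rho\to\infty$, continuity of $H^i$ and $\lambda_\rho(p_2)\to E^{M,R}(p_2)$ give the required inequality at $y_0$. If $y_0\in\widetilde\Gamma_\eta$, then up to a subsequence either $y_\rho\in\widetilde\Gamma_\eta$, in which case $H_{\widetilde\Gamma_\eta}(D\phi^L(y_\rho)+p_2e_2,D\phi^R(y_\rho)+p_2e_2,y_\rho)\le\lambda_\rho(p_2)$ passes to the limit by continuity of $H_{\widetilde\Gamma_\eta}$ in all its arguments (including $y$), or $y_\rho$ stays in one of the open half-domains, in which case one passes to the limit in the corresponding interior inequality and concludes with the help of $H^{+,L}_{\widetilde\Gamma_\eta}\le H^L$, $H^{-,R}_{\widetilde\Gamma_\eta}\le H^R$ and the characterization of subsolutions of the non-mixing transmission problem by a reduced set of test-functions from \cite{oudet2014,imbert:hal-01073954}.

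The supersolution inequalities of \eqref{cellpE} are obtained symmetrically, testing $\chi(p_2,\cdot)$ from below: here the relevant lines of \eqref{trunc-cellp} hold on the \emph{closed} set $\overline{Y^\rho}$, so there is no loss at the truncation boundary, and the interface case is treated as above. This shows that $\chi(p_2,\cdot)$ is a Lipschitz continuous viscosity solution of \eqref{cellpE}. Finally, \eqref{cellpE} is invariant under addition of constants, so replacing $\chi_\rho(p_2,\cdot)$ by $\chi_\rho(p_2,\cdot)-\chi(p_2,0)$ and $\chi(p_2,\cdot)$ by $\chi(p_2,\cdot)-\chi(p_2,0)$ we may assume $\chi(p_2,0)=0$.

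The only point needing more than the routine perturbed-test-function machinery is the interface case: when the perturbed extremum $y_\rho$ slips into one of the open half-domains, one must still deduce the junction inequality for $H_{\widetilde\Gamma_\eta}$ from the limiting interior inequality. This is exactly where the non-mixing structure of the transmission condition and the reduced-test-function characterization of \cite{imbert:hal-01073954} are used, and it is what makes the present argument a verbatim adaptation of \cite[Theorem~4.8]{MR3565416}; everything else is the standard stability of viscosity solutions, also used in \cite{MR3299352,MR3441209}.
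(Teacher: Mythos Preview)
Your proposal is correct and matches the paper's approach: the paper itself gives no proof, simply citing \cite[Theorem~4.8]{MR3565416}, and your sketch is precisely a faithful outline of that argument---standard viscosity stability away from the interface, with the junction case handled via the reduced-test-function characterization of \cite{imbert:hal-01073954}. You have also correctly flagged that the only genuinely non-routine step is the passage to the limit at $\widetilde\Gamma_\eta$ when the perturbed extremum drifts into an open half-domain, and that this is exactly where the Imbert--Monneau machinery is invoked.
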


 \subsubsection{Comparison between  $E^{M, R}_0(p_2)$ and $E^{M, R}(p_2)$ respectively defined in (\ref{def:EMR_0}) and (\ref{eq:def_E})}\label{sec:comp-betw-e_0-1}
For $\epsilon>0$,  let us set $W_\epsilon(p_2,y)=\epsilon\chi( p_2,\frac {y-\eta e_1} {\epsilon})$. The following result is reminiscent of \cite[Theorem 4.6,iii]{MR3441209}:
 \begin{lemma}
\label{lem:rescaling_omega} 
 For any $p_2\in \R$,   
there exists a sequence    $\epsilon_n$ of positive numbers tending to $0$ as $n\to +\infty$  such that  $W_{\epsilon_n}  (p_2,\cdot)$ converges locally uniformly to a 
Lipschitz  function $y\mapsto W( p_2, y)$ (the Lipschitz does not depend on $\eta$). 
This function is constant with respect to $y_2$ and satisfies $W( p_2,\eta e_1)=0$. It is  a viscosity solution of 
\begin{equation}
\label{W}
\begin{array}[c]{rcll}
H^R(Du(y)+p_2e_2)&=&E^{M, R}(p_2),\quad&\hbox{ if}\quad  y_1>\eta,\\
H^M(Du(y)+p_2e_2)&=&E^{M, R}(p_2),\quad&\hbox{ if}\quad  y_1<\eta.
\end{array}
\end{equation}
\end{lemma}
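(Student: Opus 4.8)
The plan is to establish Lemma~\ref{lem:rescaling_omega} by combining three ingredients: uniform Lipschitz bounds on the rescaled functions $W_\epsilon(p_2,\cdot)$, the half-relaxed limits / stability of viscosity solutions under rescaling, and a careful identification of the limiting equation in the half-plane $\{y_1<\eta\}$ using the effective Hamiltonian $H^M$ from Proposition~\ref{sec:effect-hamilt-omeg-1}. First I would record that $\chi(p_2,\cdot)$ is a Lipschitz viscosity solution of the global cell problem \eqref{cellpE} with Lipschitz constant $L=L(p_2)$ independent of $\eta$ (this is inherited from Lemma~\ref{lem:existence_solution_truncated_cell_pb} through Theorem~\ref{thm:stability_from_truncated_cell_pb_to_global_cell_pb}, using the coercivity assumption [H3] and the boundedness of the costs [H1] to get a bound depending only on $p_2$). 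Consequently $W_\epsilon(p_2,\cdot)$ is $L$-Lipschitz and satisfies $W_\epsilon(p_2,\eta e_1)=0$, so the family is locally uniformly bounded and equi-Lipschitz; by Arzel\`a--Ascoli a subsequence $W_{\epsilon_n}(p_2,\cdot)$ converges locally uniformly to an $L$-Lipschitz function $W(p_2,\cdot)$ with $W(p_2,\eta e_1)=0$.

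Next I would pass to the limit in the equation. In the region $\{y_1>\eta\}$ the picture is straightforward: there $W_\epsilon$ solves $H^R(DW_\epsilon+p_2e_2)=E^{M,R}(p_2)$ (the interface $\widetilde\Gamma_\eta$, after the dilation by $1/\epsilon$ and translation, is pushed to $\{y_1<\eta\}$ for $\epsilon$ small, since $\widetilde\Gamma_\eta$ is contained in a left half-plane), and since $H^R$ does not depend on the fast variable the standard stability theorem for viscosity solutions gives $H^R(DW(p_2,\cdot)+p_2e_2)=E^{M,R}(p_2)$ in $\{y_1>\eta\}$. The region $\{y_1<\eta\}$ is where the homogenization really happens: there $W_\epsilon(p_2,y)=\epsilon\chi(p_2,(y-\eta e_1)/\epsilon)$ with $\chi$ solving the transmission problem \eqref{cellpE} across the oscillating interface $\widetilde\Gamma_\eta$ at unit scale, which at scale $\epsilon$ becomes an $\epsilon$-periodic (in the $y_1$-direction, with the vertical structure of period $\eta$) transmission problem whose homogenized Hamiltonian is exactly $H^M$. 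To make this rigorous I would use the perturbed test-function method of Evans: given a smooth $\varphi$ touching $W(p_2,\cdot)$ from above (resp. below) at a point $\bar y$ with $\bar y_1<\eta$, I would add to $\varphi$ a corrector $\epsilon\,\zeta(D\varphi(\bar y)+p_2e_2,\cdot/\epsilon)$ built from Proposition~\ref{sec:effect-hamilt-omeg-1} with cell-variable the horizontal fast variable $(y_1-\eta)/\epsilon$ — note the roles of $e_1$ and $e_2$ are swapped relative to Proposition~\ref{sec:effect-hamilt-omeg-1} because here the interface is (asymptotically) vertical rather than horizontal — and use that $\chi$, hence $W_\epsilon$, is a sub/supersolution of the $\epsilon$-scaled transmission problem to derive $H^M(D\varphi(\bar y)+p_2e_2)\le E^{M,R}(p_2)$ (resp. $\ge$). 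Finally, the fact that $W(p_2,\cdot)$ is constant in $y_2$ follows because $E^{M,R}(p_2)$ is a fixed constant and the equation in the two half-planes depends on $Du$ only through the full gradient; more directly, each $W_\epsilon(p_2,\cdot)$ is $1$-periodic in $y_2/(\eta\epsilon)$ (inherited from the $1$-periodicity of $\chi$ in $y_2/\eta$), so the limit has arbitrarily small period in $y_2$ and, being continuous, is independent of $y_2$.

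The main obstacle I expect is the identification of the limiting equation in the strip $\{y_1<\eta\}$ as involving precisely $H^M$, rather than some other effective Hamiltonian. There are two subtleties. First, the cell problem of Proposition~\ref{sec:effect-hamilt-omeg-1} is a \emph{one-dimensional} problem in the fast variable $y$ aligned with $e_2$ with transmission conditions at the points $\gamma_\eta^a,\gamma_\eta^b$, whereas after rescaling the function $\chi$ oscillates in the direction transverse to the (asymptotically vertical) interface $\widetilde\Gamma_\eta$; one must check that in the limit $\epsilon\to0$ the genuinely two-dimensional oscillation of $\widetilde\Gamma_\eta$ (which has both the $\widetilde G$-jumps of amplitude $\sim\eta$ and the smooth $g$-corrections) collapses to exactly the one-dimensional structure encoded in $H^M$. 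This is essentially the statement that the homogenization of the transmission problem across $\widetilde\Gamma_\eta$ at small scale produces $H^M$, and it is the analogue for this geometry of the construction of $H^M$ in \S~\ref{sec:effect-hamilt-omeg}; I would either cite the corresponding argument from \cite{MR3565416} or redo the perturbed-test-function computation, being careful about the behavior near the degenerate points $\gamma_\eta^a,\gamma_\eta^b$ where $g'\to\pm\infty$ (the assumptions on $g''/g'\to0$ in \S~\ref{sec:geometry} are exactly what makes the corrector well-behaved there). Second, one must ensure the constant on the right-hand side is the \emph{same} $E^{M,R}(p_2)$ in both half-planes and equals the limit defined in \eqref{eq:def_E}; this is automatic since $W_\epsilon$ solves \eqref{cellpE} with that single constant and the constant does not change under rescaling, but it is worth stating explicitly. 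The remaining steps — Arzel\`a--Ascoli, stability in $\{y_1>\eta\}$, periodicity forcing $y_2$-independence, and $W(p_2,\eta e_1)=0$ — are routine.
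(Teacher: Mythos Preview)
Your overall strategy matches the paper's: Arzel\`a--Ascoli for compactness, direct stability for $y_1>\eta$, Evans' perturbed test-function method for $y_1<\eta$, and the $y_2$-periodicity argument for $y_2$-independence of the limit. However, there is a genuine geometric error in the key step.

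You claim that in the region $\{y_1<\eta\}$ the rescaled problem is ``$\epsilon$-periodic in the $y_1$-direction'' and that the corrector $\zeta$ from Proposition~\ref{sec:effect-hamilt-omeg-1} should be applied in the \emph{horizontal} fast variable $(y_1-\eta)/\epsilon$, with the roles of $e_1$ and $e_2$ swapped. This is incorrect. In the geometry $\widetilde\Gamma_\eta$ of \S\ref{sec:state-constr-probl}, the region far to the left (i.e.\ $z_1\ll 0$ for $\chi$, equivalently $y_1<\eta$ after rescaling) is a \emph{horizontally layered} medium: semi-infinite strips of $\widetilde\Omega^L_\eta$ (for $z_2/\eta\in (a,b)+\Z$) alternating with strips of $\widetilde\Omega^R_\eta$, separated by the horizontal lines $z_2\in\gamma^a_\eta\cup\gamma^b_\eta$. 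There is no periodicity in $y_1$; the only periodicity is in $y_2$, with period $\epsilon\eta$ after rescaling. You yourself note this correctly in your final paragraph when deriving $y_2$-independence, but your perturbed test-function construction contradicts it.

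The paper therefore takes $\phi_\epsilon(y)=\phi(y)+\epsilon\,\zeta\bigl(D\phi(\bar y)+p_2e_2,\,y_2/\epsilon\bigr)-\delta$, with the cell variable in the $e_2$ direction---exactly as in Proposition~\ref{sec:effect-hamilt-omeg-1}, \emph{with no swap}. The one-dimensional cell problem of that proposition is precisely the description of this horizontally layered medium, and this is why $H^M$ appears. With the corrector placed in the wrong variable, the comparison between $\phi_\epsilon$ and $W_\epsilon$ would fail: $\phi_\epsilon$ would not be a supersolution of the transmission problem across the horizontal interface lines, and the contradiction argument would not close. Your discussion of ``obstacles'' involving the degenerate points where $g'\to\pm\infty$ is also off-target: those points belong to the curved part of $\widetilde\Gamma_\eta$ near $y_1=\eta$, not to the layered region $y_1<\eta$ where the perturbed test-function argument takes place.
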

\begin{proof}
  It is clear that $y\mapsto W_\epsilon( p_2,y)$ is a Lipschitz continuous function with a constant $\Lambda$ independent of $\epsilon$ and that $W_\epsilon( p_2,\eta e_1)=0$.
Thus, from Ascoli-Arzela's Theorem, we may assume that  $y\mapsto W_\epsilon( p_2,y)$  converges locally uniformly to some function $y \mapsto W(p_2,y)$,  maybe after the extraction of a subsequence.
 The function $y \mapsto W(p_2,y)$ is Lipschitz continuous  with constant $\Lambda$ and  $W(p_2,\eta e_1)=0$.
 Moreover, since $W_\epsilon( p_2,y)$ is periodic with respect to $y_2$  with period $\epsilon$, $W(p_2,y)$ does not depend on $y_2$.\\
 To prove that $W(p_2,\cdot)$ is a viscosity solution of \eqref{W},
we focus on the more difficult case when $y_1<\eta$; we also restrict ourselves to  proving that  $W(p_2,\cdot)$ is a viscosity subsolution of \eqref{W}, because the proof that $W(p_2,\cdot)$ 
is a viscosity supersolution follows the same lines.\\
 Consider  $\bar y\in \R^2 $ such that $\bar y_1<\eta$, $\phi \in \cC^1(\R^2)$ and $r_0<0$  such that  $ B(\bar y,r_0)$ is contained in $\{y_1<\eta\}$  and that 
\begin{equation*}
\label{proof:rescaling1}
W(p_2,y)-\phi(y)<W(p_2,\bar y)-\phi(\bar y)=0 \mbox{ for  } y\in B(\bar y,r_0)\setminus\{\bar y\}.
\end{equation*}
We first observe   that $y\mapsto W_\epsilon( p_2,y)$ is a viscosity solution of
\begin{equation}
  \label{eq:9}
  \begin{array}[c]{rcll}
H^i(Du(y)+p_2e_2)&=&  E^{M,R}( p_2), \quad  &    \hbox{ if }y\in \Omega^i_{\eta,\epsilon} \cap B(\bar y,r_0) , \,i=L,R,\\    
H_{ \Gamma_{\eta,\epsilon}} (y,Du^L(y)+p_2e_2, Du^R(y)+p_2e_2 )&=&  E^{M,R}( p_2), \quad  &    \hbox{ if }y\in\Gamma_{\eta,\epsilon} \cap B(\bar y,r_0).
  \end{array}
\end{equation}
We wish to prove that
$H^M(D\phi(\bar y)+p_2e_2)\le E^{M,R}( p_2)$.
Let us argue by contradiction and assume that there exists $\theta>0$ such that
\begin{equation}
\label{proof:rescaling3}
H^M(D\phi(\bar y)+p_2e_2)= E^{M,R}( p_2)+\theta.
\end{equation}
Take $\phi_\epsilon(y)=\phi(y)+\epsilon   \zeta( D\phi(\bar y)+p_2e_2, \frac{y_2}{\epsilon})-\delta$, where
$\zeta$ is a one-dimensional periodic corrector constructed in Proposition \ref{sec:effect-hamilt-omeg-1} and $\delta>0$ is a fixed positive number.
We claim that  for $r>0$ small enough, $\phi_\epsilon$ is a viscosity supersolution of
\begin{equation}
\label{eq:10}
  \begin{array}[c]{rcll}
    H^i(Du(y)+p_2e_2)&\ge &  E^{M,R}( p_2)+\frac \theta 2 , \quad  &    \hbox{ if }y\in \Omega^i_{\eta,\epsilon} \cap B(\bar y,r) ,\\    
    H_{ \Gamma_{\eta,\epsilon}} (y,Du^L(y)+p_2e_2, Du^R(y)+p_2e_2 )&\ge&  E^{M,R}( p_2) +\frac \theta 2, \quad  &    \hbox{ if }y\in\Gamma_{\eta,\epsilon} \cap B(\bar y,r).
  \end{array}
\end{equation}
This comes from (\ref{proof:rescaling3}), the definition of $ \zeta( D\phi(\bar y)+p_2e_2, \frac{y_2}{\epsilon})$, the $\cC^1$ regularity of  $\phi$
 and the Lipschitz continuity of $H^ i$ and $ H_{ \Gamma_{\eta,\epsilon}}$ with respect to the $p$ variables.\\
Hence,  $W_\epsilon(p_2,\cdot)$ is a subsolution of (\ref{eq:9}) and  $\phi_\epsilon$ is a supersolution of (\ref{eq:10})
 in $B(\bar y,r)$.
  Moreover for $r>0$ small enough,
$ \max_{y\in \partial B(\bar y,r)}\left(W(p_2,y)-\phi(y) \right)<0$. Hence, for $\delta>0$ and $\epsilon>0$ small enough
$\max_{y\in \partial B(\bar y,r)}\left(W_\epsilon(p_2,y)-\phi_\epsilon(y) \right) \le 0$.\\
Thanks to a standard comparison principle (which holds thanks to the fact that $\frac{\theta}{2}>0$)
\begin{equation}
\label{proof:rescaling6}
\max_{y\in B(\bar y,r)}\left(W_\epsilon(p_2,y)-\phi_\epsilon(y) \right) \le 0.
\end{equation}
 Letting $\epsilon \to 0$ in \eqref{proof:rescaling6}, we deduce that $W(p_2,\bar y)\le\phi(\bar y)-\delta$,
which is in contradiction with the assumptions. 
\end{proof}
 Using Lemma~\ref{lem:rescaling_omega}, it is possible to  compare
 $E^{M, R}_0(p_2)$ and $E^{M, R}(p_2)$ respectively defined in (\ref{def:EMR_0}) and (\ref{eq:def_E})
 \begin{proposition}
\label{cor:E_bigger_than_E_0}
For any $p_2 \in \R$,
\begin{equation}
\label{eq:cor:E_bigger_than_E_0}
E^{M, R}(p_2) \geq  E^{M, R}_0(p_2).
\end{equation}
\end{proposition}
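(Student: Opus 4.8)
The goal is to show $E^{M,R}(p_2) \geq E^{M,R}_0(p_2) = \max\{E^L_0(p_2), E^R_0(p_2)\}$, so it suffices to prove the two separate inequalities $E^{M,R}(p_2) \geq E^R_0(p_2)$ and $E^{M,R}(p_2) \geq E^M_0(p_2)$. The plan is to exploit the function $W(p_2,\cdot)$ furnished by Lemma~\ref{lem:rescaling_omega}, which is a Lipschitz viscosity solution of \eqref{W}: $H^R(DW+p_2e_2)=E^{M,R}(p_2)$ for $y_1>\eta$ and $H^M(DW+p_2e_2)=E^{M,R}(p_2)$ for $y_1<\eta$, with $W$ independent of $y_2$. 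Writing $W(p_2,y)=w(y_1)$ for a scalar Lipschitz function $w$, the equations become $H^R(w'(y_1)e_1+p_2e_2)=E^{M,R}(p_2)$ on $\{y_1>\eta\}$ and $H^M(w'(y_1)e_1+p_2e_2)=E^{M,R}(p_2)$ on $\{y_1<\eta\}$, now genuinely one-dimensional.

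First I would treat the $\Omega^R$ side. On the half-line $\{y_1>\eta\}$, $w$ is a Lipschitz viscosity solution of the one-dimensional equation $H^R(w'(y_1)e_1+p_2e_2)=E^{M,R}(p_2)$. Since $q\mapsto H^R(qe_1+p_2e_2)$ is convex and coercive (inherited from [H2], [H3]), it attains its minimum value $E^R_0(p_2)=\min_q H^R(qe_1+p_2e_2)$, and its range on the level set is an interval; a standard argument for convex coercive one-dimensional Hamiltonians shows that the equation $H(w')=c$ admits a Lipschitz viscosity solution on an unbounded interval only if $c\geq \min H$. Concretely, if one had $E^{M,R}(p_2)<E^R_0(p_2)$, the equation would have no viscosity solution at all (even locally the subsolution/supersolution inequalities become incompatible since $H^R(\cdot)\geq E^R_0(p_2)>E^{M,R}(p_2)$ everywhere, contradicting the supersolution inequality at any point). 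Hence $E^{M,R}(p_2)\geq E^R_0(p_2)$. The same reasoning applied on $\{y_1<\eta\}$ with $H^M$ in place of $H^R$, using the convexity and coercivity of $H^M$ from Lemma~\ref{sec:effect-hamilt-hm}, gives $E^{M,R}(p_2)\geq E^M_0(p_2)$. Taking the maximum yields \eqref{eq:cor:E_bigger_than_E_0}.

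The main obstacle, and the point requiring the most care, is making rigorous the claim that a convex coercive Hamiltonian equation $H(u')=c$ on an unbounded one-dimensional domain forces $c\geq \min H$ — in the viscosity sense one must check this does not fail through a subtle boundary/periodicity effect. Here the cleanest route is the direct pointwise argument above: at \emph{any} interior point $\bar y$ of the relevant half-line, testing with the affine function $\phi(y)=0$ (so $D\phi=0$) shows the supersolution inequality forces $H^R(p_2e_2)\geq E^{M,R}(p_2)$; but more is needed since we want $\min_q H^R(qe_1+p_2e_2)$, not just the value at $q=0$. So instead I would test with $\phi(y)=q^\star(y_1-\eta)$ where $q^\star$ is a minimizer of $q\mapsto H^R(qe_1+p_2e_2)$: at a point where $W-\phi$ has a local minimum (which exists by a standard perturbation/sliding argument, or one can invoke that $W$ touches some affine function from below on any interval by Lipschitz continuity), the supersolution inequality gives $E^{M,R}(p_2)\leq H^R(q^\star e_1+p_2e_2)=E^R_0(p_2)$ — wait, that is the wrong direction; so one rather uses the subsolution inequality at a local maximum of $W-\phi$ together with the fact that $H^R(\cdot+p_2e_2)$ is \emph{minimized} at $q^\star$, giving $E^{M,R}(p_2)=H^R(Dw+p_2e_2)\geq E^R_0(p_2)$ at that point directly from $H^R\geq E^R_0$. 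In fact the simplest correct statement is: since $W(p_2,\cdot)$ solves \eqref{W} and $H^R(\cdot+p_2e_2)\geq E^R_0(p_2)$ pointwise with the equation holding in particular as a supersolution, evaluating at any point gives $E^{M,R}(p_2)\geq E^R_0(p_2)$ once one notes the supersolution inequality with test function $\phi(y)=q^\star(y_1-\eta)+\kappa|y-\bar y|^2$ for small $\kappa>0$ pins the gradient near $q^\star$. I would write this out carefully for the $R$-side and then state that the $M$-side is identical, citing Lemma~\ref{sec:effect-hamilt-hm} for the needed convexity and coercivity of $H^M$.
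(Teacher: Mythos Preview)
Your overall strategy---invoke $W(p_2,\cdot)$ from Lemma~\ref{lem:rescaling_omega} and use that $H^i(\cdot\,e_1+p_2e_2)\ge E_0^i(p_2)$ on each side---is exactly the paper's approach. (Minor typo: you wrote $E^L_0$ where you meant $E^M_0$.) The difference is in execution, and yours has a genuine wrong turn.

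You repeatedly reach for the supersolution inequality, but that gives the reverse of what you want. At a local minimum of $W-\phi$ the supersolution condition reads $H^R(D\phi+p_2e_2)\ge E^{M,R}(p_2)$; choosing $D\phi\approx q^\star e_1$ with $q^\star$ a minimizer of $q\mapsto H^R(qe_1+p_2e_2)$ then yields $E_0^R(p_2)\gtrsim E^{M,R}(p_2)$, the opposite inequality. Your ``simplest correct statement'' at the end still has this backwards. What you need is the \emph{subsolution} inequality at a local maximum, which gives $H^R(D\phi+p_2e_2)\le E^{M,R}(p_2)$, and since $H^R(\cdot)\ge E_0^R(p_2)$ always, this forces $E_0^R(p_2)\le E^{M,R}(p_2)$. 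But then you must produce a point where $W-\phi$ has a local maximum, and your test function $\phi(y)=q^\star(y_1-\eta)+\kappa|y-\bar y|^2$ does not obviously land $D\phi$ near $q^\star e_1$ at that maximum.

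The paper sidesteps all of this with a one-line observation you never make: $W(p_2,\cdot)$ is Lipschitz and independent of $y_2$, hence (Rademacher) differentiable for almost every $y_1$. At any point of differentiability the viscosity sub- and superdifferentials both equal $\{DW\}$, so the equation holds classically there:
\[
E^{M,R}(p_2)=H^R\bigl(\partial_{y_1}W(p_2,y)\,e_1+p_2e_2\bigr)\ge \min_{q\in\R}H^R(qe_1+p_2e_2)=E_0^R(p_2),
\]
and similarly on the $M$ side. No test-function gymnastics are needed. I would replace your entire viscosity-inequality discussion with this almost-everywhere argument.
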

\begin{proof}
Thanks to Lemma \ref{lem:rescaling_omega}, the function $y\mapsto W(p_2,y)$ is a viscosity solution
$H^M(Du(y)+p_2e_2)=E^{M, R}(p_2)$  in $\{y: y_1<\eta\}$.  Keeping in mind that  $W(p_2,y)$ is independent of $y_2$,
 we see that  for almost all $y_1<\eta$, $E^{M, R}(p_2)= H^M(\partial_{y_1}W(p_2,y_1)e_1+p_2e_2) \ge E_0^M(p_2)$, from (\ref{def:E_0^i}).
Similarly, we show that  $E^{M, R}(p_2)= H^R(\partial_{y_1}W(p_2,y_1)e_1+p_2e_2) \ge E_0^R(p_2)$ at almost $y_1>\eta$, and we conclude using (\ref{def:EMR_0}).
\end{proof}
\subsubsection{Asymptotic values of the slopes of $\chi$ as $y_1\to \infty$}\label{sec:asympt-valu-slop}
From Proposition \ref{cor:E_bigger_than_E_0} and the coercivity of the Hamiltonians $H^i$, $i=M,R$,
 the following numbers are well defined for all $p_2 \in \R$:
\begin{eqnarray}
\label{eq:5}
\overline{\Pi}^M(p_2)\!= \!\min\left\lbrace q\in \R : H^M( p_2e_2+qe_1)=H^{-,1,M}( p_2e_2+qe_1)=E^{M,R}(p_2) \right\rbrace\\
\label{eq:6}
\widehat{\Pi}^M(p_2)\!= \!\max\left\lbrace q\in \R : H^M( p_2e_2+qe_1)=H^{-,1,M}( p_2e_2+qe_1)=E^{M,R}(p_2) \right\rbrace\\
\label{eq:23}
\overline{\Pi}^R(p_2)\!= \!\min\left\lbrace q\in \R : H^R( p_2e_2+qe_1)=H^{+,1,R}( p_2e_2+qe_1)=E^{M,R}(p_2) \right\rbrace\\
\label{eq:24}
\widehat{\Pi}^R(p_2)\!= \!\max\left\lbrace q\in \R : H^R( p_2e_2+qe_1)=H^{+,1,R}( p_2e_2+qe_1)=E^{M,R}(p_2) \right\rbrace
\end{eqnarray}
\begin{remark}
\label{rmk:equality_bar_Pi_and_hat_Pi}
From the  convexity  of the Hamiltonians $H^i$ and $H^{\pm,1,i}$, we deduce that if for $i=M,R$, $E^i_0(p_2)<E ^{M,R}(p_2)$,  then 
$\overline{\Pi}^i(p_2)=\widehat{\Pi}^i(p_2)$. In this case, we will use the notation
\begin{equation}
\label{eq:special_notation_for_Pi}
\Pi^i(p_2)=\overline{\Pi}^i(p_2)=\widehat{\Pi}^i(p_2).
\end{equation}
\end{remark}
 Propositions \ref{cor:slopes_omega} and \ref{cor:control_slopes_W} below, which will be proved in Appendix~\ref{sec:proofs-prop-refc},  
provide information on the growth of $y\mapsto \chi(p_2,y)$ as $|y_1|\to \infty$, where $\chi$ is obtained in Theorem~\ref{thm:stability_from_truncated_cell_pb_to_global_cell_pb} and is a solution of the cell problem (\ref{cellpE}):
\begin{proposition}
\label{cor:slopes_omega}
With  $\Pi^i(p_2)\in \R$  defined in \eqref{eq:special_notation_for_Pi} for $i=M,R$,
\begin{enumerate}
\item If $E^{M,R}( p_2)>E_0^R(p_2)$, then,  there exist  $\rho^*=\rho^*(p_2)  >0$ and $M^*= M^*(p_2)   \in \R$
such that, for all $y\in [\rho^*,+\infty)\times \R$, $h_1\ge 0$ and $h_2\in \R$,
\begin{equation}
\label{slope32}
\chi( p_2, y+h_1e_1+h_2 e_2)-\chi( p_2, y)\geq  \Pi^R(p_2) h_1-M^*.
\end{equation}
\item  If $E^{M,R}( p_2)>E_0^M(p_2)$, then, there exist  $\rho^*=\rho^*(p_2)  >0$ and $M^*= M^*(p_2)   \in \R$
such that, for all  $y\in (-\infty,-\rho^*]\times \R$, $h_1\ge 0$ and $h_2\in \R$,
\begin{equation}
\label{slope3_bis2}
\chi( p_2, y-h_1e_1+h_2 e_2)-\chi( p_2, y)\geq - \Pi^M(p_2) h_1-M^*.
\end{equation}
\end{enumerate}
\end{proposition}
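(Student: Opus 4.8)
The plan is to prove the two estimates first for the truncated correctors $\chi_\rho(p_2,\cdot)$ of Lemma~\ref{lem:existence_solution_truncated_cell_pb}, with constants independent of $\rho$, and then to pass to the limit $\rho\to\infty$ by Theorem~\ref{thm:stability_from_truncated_cell_pb_to_global_cell_pb}; this follows the route used for the analogous results in \cite{MR3299352,MR3441209,MR3565416}. Since $g$ is bounded, one can fix $\rho^*=\rho^*(p_2)$ so large that the half-plane $\{y:\ y_1>\rho^*\}$ is contained in $\widetilde\Omega_\eta^R$ and does not meet $\widetilde\Gamma_\eta$, while $\{y:\ y_1<-\rho^*\}$ meets $\widetilde\Gamma_\eta$ only along the vertical walls of the fingers; on those walls $\tilde n_\eta=\mp e_2$, so that $H_{\widetilde\Gamma_\eta}$ reduces there to the "horizontal interface" condition $\max(H^{+,2,\cdot},H^{-,2,\cdot})$. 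In particular the restriction of the global cell problem \eqref{cellpE} to the strip $\{-\rho<y_1<-\rho^*\}$ is exactly of the form solved, in the transverse variable $y_2$, by the one-dimensional cell problem of Proposition~\ref{sec:effect-hamilt-omeg-1}, and is thus governed by the effective Hamiltonian $H^M$. On each of the two strips $\{\rho^*<y_1<\rho\}$ and $\{-\rho<y_1<-\rho^*\}$ (with the $\eta$-periodicity in $y_2$), a comparison argument identifies $\chi_\rho$ with the value function of a state-constrained \emph{exit} optimal control problem: state constraint on the outer boundary $\{y_1=\pm\rho\}$ inherited from $\overline{Y^\rho}$, and Dirichlet datum on the inner boundary $\{y_1=\mp\rho^*\}$ equal to the trace of $\chi_\rho$ there, the running cost being $a\mapsto \ell^i(a)+p_2\,(f^i(a))_2+\lambda_\rho(p_2)$ in region $i$.

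The assumptions $E^{M,R}(p_2)>E_0^R(p_2)$ (part~1) and $E^{M,R}(p_2)>E_0^M(p_2)$ (part~2) are used twice. On the one hand, by Remark~\ref{rmk:equality_bar_Pi_and_hat_Pi} they make $\Pi^R(p_2)$, respectively $\Pi^M(p_2)$, well defined, and moreover non-degenerate: $\Pi^R(p_2)e_1+p_2e_2$ lies on the strictly increasing branch of $H^R$, so every element of $\partial_p H^R$ at that point has a positive first component, whereas $\Pi^M(p_2)e_1+p_2e_2$ lies on the strictly decreasing branch of $H^M$. On the other hand, since $\lambda_\rho(p_2)\to E^{M,R}(p_2)$ and the inequalities are strict, for $\rho$ large one has $\lambda_\rho(p_2)>E_0^R(p_2)$, respectively $\lambda_\rho(p_2)>E_0^M(p_2)$; as $E_0^R(p_2)$ (resp. $E_0^M(p_2)$, by Proposition~\ref{sec:effect-hamilt-omeg-1}) is precisely the state-constrained ergodic constant of the right strip for $H^R$ (resp. of the left strip for the transmission problem), any admissible trajectory that remains in the closed strip for all times has running cost with long-time average at least $\lambda_\rho(p_2)-E_0^R(p_2)>0$, hence infinite total cost. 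Therefore $\chi_\rho$ on the strip is given by the pure exit formula, the infimum running only over trajectories that eventually reach the inner line $\{y_1=\mp\rho^*\}$. For $\rho$ large one also gets roots $\Pi^R_\rho(p_2)\to\Pi^R(p_2)$ of $H^R(\cdot\,e_1+p_2e_2)=\lambda_\rho(p_2)$ on the increasing branch, and $\Pi^M_\rho(p_2)\to\Pi^M(p_2)$ of $H^M(\cdot\,e_1+p_2e_2)=\lambda_\rho(p_2)$ on the decreasing branch.

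For part~1, fix $z$ with $z_1\ge\rho^*$ and $\rho>z_1$. Lower bound: along any admissible trajectory in the closed strip $\{\rho^*\le y_1\le\rho\}$ issued from $z$ and reaching $\{y_1=\rho^*\}$, the running cost integrates to at least $\Pi^R_\rho(p_2)\,(z_1-\rho^*)$, because $\ell^R(a)+p_2(f^R(a))_2+\lambda_\rho(p_2)\ge-\Pi^R_\rho(p_2)\,(f^R(a))_1$ for all $a\in A^R$ (dual to $\Pi^R_\rho(p_2)e_1+p_2e_2\in\partial\{H^R\le\lambda_\rho(p_2)\}$), the net horizontal displacement being $\rho^*-z_1$; combined with the bound on the trace of $\chi_\rho$ on $\{y_1=\rho^*\}$ — uniform in $\rho$ thanks to the $\rho$-independent Lipschitz constant of Lemma~\ref{lem:existence_solution_truncated_cell_pb}, the $\eta$-periodicity in $y_2$, and the normalization $\chi_\rho(p_2,0)\to0$ — this gives $\chi_\rho(p_2,z)\ge\Pi^R_\rho(p_2)\,z_1-C$. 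Upper bound: pick $a^*\in A^R$ with $-f^R(a^*)\in\partial_p H^R(\Pi^R_\rho(p_2)e_1+p_2e_2)$; then $(f^R(a^*))_1<0$, the running cost at $a^*$ equals $-\Pi^R_\rho(p_2)\,(f^R(a^*))_1$, and the straight trajectory $t\mapsto z+tf^R(a^*)$ stays in the closed strip, reaches $\{y_1=\rho^*\}$ in finite time, and has total running cost exactly $\Pi^R_\rho(p_2)\,(z_1-\rho^*)$; hence $\chi_\rho(p_2,z)\le\Pi^R_\rho(p_2)\,z_1+C$. Thus $|\chi_\rho(p_2,z)-\Pi^R_\rho(p_2)z_1|\le C$ uniformly in $z$ (in the right strip) and in $\rho$; letting $\rho\to\infty$ and evaluating the resulting bound $|\chi(p_2,z)-\Pi^R(p_2)z_1|\le C$ at $z=y$ and at $z=y+h_1e_1+h_2e_2$ yields \eqref{slope32} with $M^*=2C$ (the value of $h_2$ being irrelevant, as the bound on $\chi(p_2,z)-\Pi^R(p_2)z_1$ does not involve $z_2$). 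Part~2 is proved by the same scheme in the left strip, with $H^R$ replaced by $H^M$ and the affine certificate $y\mapsto\Pi^R_\rho(p_2)y_1$ replaced by the product-form certificate $y\mapsto\Pi^M_\rho(p_2)y_1+\zeta\big(\Pi^M_\rho(p_2)e_1+p_2e_2,y_2\big)$, with $\zeta$ the $\eta$-periodic corrector of Proposition~\ref{sec:effect-hamilt-omeg-1}: this is a subsolution of the transmission problem on the left strip with constant $\lambda_\rho(p_2)$, its $y_2$-oscillation is bounded by the Lipschitz constant of $\zeta$, and the optimal trajectory attached to it drifts strictly to the right because $\Pi^M(p_2)$ lies on the decreasing branch of $H^M$. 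One obtains $|\chi(p_2,z)-\Pi^M(p_2)z_1|\le C$ for $z_1\le-\rho^*$, whence \eqref{slope3_bis2}.

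The technical heart — hence the main difficulty — is the state-constrained optimal control analysis underlying all of the above: checking that the restriction of $\chi_\rho$ to a strip coincides with the value function of the exit problem with the correct boundary conditions (comparison for the sub-problem, Dirichlet on the inner line and state constraint toward $\pm\infty$); proving that trajectories confined to a strip for all times have infinite cost, which is exactly where the strict inequalities in the hypotheses are used, through the identification of $E_0^R(p_2)$ and $E_0^M(p_2)$ with the corresponding state-constrained ergodic constants; and, in part~2, carrying the transmission conditions of $\widetilde\Gamma_\eta$ consistently through both the lower bound (the subsolution inequality across the vertical walls) and the explicit near-optimal trajectory, and matching them with the effective Hamiltonian $H^M$ via its one-dimensional corrector. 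Keeping all constants uniform in $\rho$ — via Lemma~\ref{lem:existence_solution_truncated_cell_pb}, the $\eta$-periodicity, and the normalization at the origin — before passing to the limit is the remaining bookkeeping.
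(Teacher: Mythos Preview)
Your proposal is correct and shares the paper's overall architecture --- prove the slope estimate on the truncated correctors $\chi_\rho$ with constants independent of $\rho$, then let $\rho\to\infty$ --- and in particular you hit on the same key object as the paper for part~2: the product-form certificate $y\mapsto q\,y_1+\zeta(q\,e_1+p_2e_2,y_2)$ built from the one-dimensional corrector of Proposition~\ref{sec:effect-hamilt-omeg-1}. The execution, however, is genuinely different. The paper (Lemma~A.1) argues purely in viscosity terms: it introduces a small parameter $\delta>0$, takes $q=q_{\rho,\delta}$ solving $H^{-,1,M}(p_2e_2+q_{\rho,\delta}e_1)=\lambda_\rho(p_2)-\delta$, and uses the resulting \emph{strict} subsolution to run a comparison against $\chi_\rho-\chi_\rho(\bar y)$ on the strip, with the state constraint at $y_1=\pm\rho$; the bulk of the work is proving this comparison when the strip's boundary meets $\widetilde\Gamma_\eta$, which they do by combining the Imbert--Monneau vertex test-function with Soner's doubling argument for state constraints, and then sending $\delta\to0$. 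You instead invoke the optimal-control interpretation directly: identify $\chi_\rho$ on the strip with an exit-time value function, use the strict inequality $\lambda_\rho(p_2)>E_0^i(p_2)$ to rule out non-exiting trajectories, and bound the running cost along exiting trajectories via the duality inequality $\ell^i(a)+p_2(f^i(a))_2+\lambda_\rho(p_2)\ge -\Pi^i_\rho(p_2)(f^i(a))_1$. Your route avoids the $\delta$-perturbation and the vertex test-function machinery, and as a bonus yields the two-sided bound $|\chi(p_2,z)-\Pi^i(p_2)z_1|\le C$ rather than just the one-sided estimate stated; the price is that the ``technical heart'' you flag --- identifying $\chi_\rho$ with the exit value function, with the correct treatment of the transmission walls and the state constraint --- amounts to a comparison principle of the same strength as the one the paper proves, so the difficulty is relocated rather than removed.
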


\begin{proposition}
\label{cor:control_slopes_W}
For  $p_2 \in \R$,  $y\mapsto W(p_2,y)$ defined in Lemma~\ref{lem:rescaling_omega} satisfies 
\begin{eqnarray}
\label{cor:control_slopes_W1}
\overline{\Pi}^R(p_2)\le \partial_{y_1}W(p_2,y) \le \widehat{\Pi}^R(p_2) & &\hbox{ for a.a. } y\in (\eta,+\infty)\times \R,\\
\label{cor:control_slopes_W2}
 \overline {\Pi}^M(p_2) \le \partial_{y_1}W(p_2,y) \le  \widehat{\Pi}^M(p_2) & &\hbox{ for a.a. }y\in (-\infty,\eta)\times\R,
\end{eqnarray}
and for all $y$:
\begin{equation}
\label{eq:control_slopes_W_summary}
- \widehat{\Pi}^M(p_2) (y_1-\eta)^-
+ \overline{\Pi}^R(p_2) (y_1-\eta)^+ \le W(p_2,y) \le 
- \overline{\Pi}^M(p_2) (y_1-\eta)^-
+ \widehat{\Pi}^R(p_2) (y_1-\eta)^+
.
\end{equation}
\end{proposition}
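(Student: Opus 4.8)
The plan is to use that, by Lemma~\ref{lem:rescaling_omega}, $y\mapsto W(p_2,y)$ depends only on $y_1$; I write $W(p_2,y)=w(y_1)$, so $w$ is Lipschitz with $w(\eta)=W(p_2,\eta e_1)=0$ and $w$ is a viscosity solution of $H^R(w'(y_1)e_1+p_2e_2)=E^{M,R}(p_2)$ on $(\eta,+\infty)$ and of $H^M(w'(y_1)e_1+p_2e_2)=E^{M,R}(p_2)$ on $(-\infty,\eta)$. Since $W$ is in particular a Lipschitz viscosity subsolution of~\eqref{W}, it satisfies the corresponding eikonal inequalities almost everywhere: at a.e. $y_1>\eta$, $H^R(w'(y_1)e_1+p_2e_2)\le E^{M,R}(p_2)$, and at a.e. $y_1<\eta$, $H^M(w'(y_1)e_1+p_2e_2)\le E^{M,R}(p_2)$. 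By convexity of $q\mapsto H^R(p_2e_2+qe_1)$ (a supremum of affine functions) and of $q\mapsto H^M(p_2e_2+qe_1)$ (Lemma~\ref{sec:effect-hamilt-hm}), together with their coercivity, each of these conditions confines $w'(y_1)$ to a compact interval whose endpoints are the two roots of the relevant level set; the crux is to select the correct root.

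I would first dispatch the degenerate cases. If $E^{M,R}(p_2)=E_0^R(p_2)$, then $H^R(w'(y_1)e_1+p_2e_2)\le E^{M,R}(p_2)=\min_{q}H^R(p_2e_2+qe_1)$ forces $w'(y_1)$ into the minimizing set of $q\mapsto H^R(p_2e_2+qe_1)$, which by the definitions of $H^{+,1,R}$ and of $\overline{\Pi}^R,\widehat{\Pi}^R$ is exactly $[\overline{\Pi}^R(p_2),\widehat{\Pi}^R(p_2)]$; this already gives~\eqref{cor:control_slopes_W1}, and the symmetric argument with $H^M$ gives~\eqref{cor:control_slopes_W2} when $E^{M,R}(p_2)=E_0^M(p_2)$. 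It then remains to treat $E^{M,R}(p_2)>E_0^R(p_2)$ (resp. $>E_0^M(p_2)$), where Remark~\ref{rmk:equality_bar_Pi_and_hat_Pi} gives $\overline{\Pi}^R=\widehat{\Pi}^R=\Pi^R$ (resp. $\overline{\Pi}^M=\widehat{\Pi}^M=\Pi^M$), the level set $\{q:H^R(p_2e_2+qe_1)=E^{M,R}(p_2)\}$ is a pair $q_1^R<q_2^R=\Pi^R(p_2)$ with sublevel set $[q_1^R,q_2^R]$, and $\{q:H^M(p_2e_2+qe_1)=E^{M,R}(p_2)\}=\{q_1^M=\Pi^M(p_2)<q_2^M\}$ with sublevel set $[q_1^M,q_2^M]$.

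The missing one-sided bound on $w'$ is obtained by rescaling Proposition~\ref{cor:slopes_omega}. Let $\epsilon_n\to0$ be the sequence from Lemma~\ref{lem:rescaling_omega}. Fix $z_1>\eta$ and $h_1\ge0$; for $n$ large the point $\tfrac{z_1-\eta}{\epsilon_n}e_1$ (at which $W_{\epsilon_n}(p_2,z_1e_1)=\epsilon_n\chi(p_2,\tfrac{z_1-\eta}{\epsilon_n}e_1)$) has first coordinate $\ge\rho^*(p_2)$, so~\eqref{slope32} applied with the increment $\tfrac{h_1}{\epsilon_n}e_1$ and multiplied by $\epsilon_n$ yields
\begin{equation*}
W_{\epsilon_n}(p_2,(z_1+h_1)e_1)-W_{\epsilon_n}(p_2,z_1e_1)\ \ge\ \Pi^R(p_2)\,h_1-\epsilon_n M^*(p_2);
\end{equation*}
letting $n\to\infty$ and using the local uniform convergence $W_{\epsilon_n}\to W$ gives $w(z_1+h_1)-w(z_1)\ge\Pi^R(p_2)h_1$ for all $z_1>\eta$, $h_1\ge0$, hence $w'(y_1)\ge\Pi^R(p_2)=q_2^R$ a.e. on $(\eta,+\infty)$; combined with $w'(y_1)\in[q_1^R,q_2^R]$ this forces $w'(y_1)=\Pi^R(p_2)=\overline{\Pi}^R(p_2)=\widehat{\Pi}^R(p_2)$ a.e., i.e.~\eqref{cor:control_slopes_W1}. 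Rescaling~\eqref{slope3_bis2} the same way (now $\tfrac{z_1-\eta}{\epsilon_n}\le-\rho^*(p_2)$ for $z_1<\eta$) gives $w(z_1)-w(z_1-h_1)\le\Pi^M(p_2)h_1$, hence $w'(y_1)\le\Pi^M(p_2)=q_1^M$ a.e. on $(-\infty,\eta)$, which with $w'(y_1)\in[q_1^M,q_2^M]$ yields~\eqref{cor:control_slopes_W2}. Finally, since $w(\eta)=0$, integrating $w'$ from $\eta$ gives $w(y_1)=\int_\eta^{y_1}w'(s)\,ds\in[\overline{\Pi}^R(p_2)(y_1-\eta),\widehat{\Pi}^R(p_2)(y_1-\eta)]$ for $y_1>\eta$ and $w(y_1)=-\int_{y_1}^\eta w'(s)\,ds\in[-\widehat{\Pi}^M(p_2)(\eta-y_1),-\overline{\Pi}^M(p_2)(\eta-y_1)]$ for $y_1<\eta$; rewriting $y_1-\eta=(y_1-\eta)^+$ and $\eta-y_1=(y_1-\eta)^-$ on the two half-lines is exactly~\eqref{eq:control_slopes_W_summary}.

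I expect the main obstacle to be the branch selection: a Lipschitz viscosity solution of a convex eikonal equation may change its slope between the two roots of the level set across concave corners, so the equation for $W$ alone does not determine $w'$; it is precisely the one-sided growth bounds inherited from Proposition~\ref{cor:slopes_omega}---the genuinely delicate input, proved in Appendix~\ref{sec:proofs-prop-refc}---that rule out the decreasing-branch root on the $H^R$ side and the increasing-branch root on the $H^M$ side. The secondary point requiring care is to keep the degenerate cases $E^{M,R}(p_2)\in\{E_0^M(p_2),E_0^R(p_2)\}$, in which $\overline{\Pi}$ and $\widehat{\Pi}$ genuinely differ, separate from the generic ones.
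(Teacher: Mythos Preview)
Your proof is correct and follows essentially the same approach as the paper's: reduce to the one-dimensional Lipschitz function $w(y_1)$, use the eikonal equation (or subsolution inequality) to confine $w'$ to the relevant level/sublevel set of $q\mapsto H^i(p_2e_2+qe_1)$, handle the degenerate cases $E^{M,R}(p_2)=E_0^i(p_2)$ directly, and in the strict cases rescale the one-sided growth bounds of Proposition~\ref{cor:slopes_omega} through $W_{\epsilon_n}\to W$ to select the correct branch; then integrate from $y_1=\eta$ to obtain~\eqref{eq:control_slopes_W_summary}. The only cosmetic difference is that the paper uses the a.e.\ equality in~\eqref{W} (so $w'$ is one of the two roots in the strict case) whereas you use the subsolution inequality (so $w'$ lies in the interval between them), but the rescaled lower bound $w'\ge\Pi^R(p_2)$ (resp.\ upper bound $w'\le\Pi^M(p_2)$) pins it down either way.
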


\subsection{Proof of Theorem \ref{th:convergence_result}}
\label{sec:proof-theor-refth:c}

\subsubsection{ A reduced set of test-functions}\label{sec:reduced-set-test}
From \cite{MR3621434} and \cite{imbert:hal-01073954}, 
 we may use an equivalent definition for the  viscosity  solution of (\ref{def:HJeffective_short}).
We focus on the transmission condition at the interface $\Gamma_\eta^{M,R}$, because the same kind of arguments apply to the transmission at  $\Gamma_\eta^{L,M}$. Theorem \ref{th:restriction_set_of_test_functions} below, which is reminiscent of  \cite[Theorem~2.7]{MR3621434}, will tell us that the transmission condition on  $\Gamma_\eta^{M,R}$ can be tested with a reduced set of test-functions. 
\begin{definition}
\label{def:test_functions_set_restricted}
Recall that $\overline{\Pi}^i$ and $\widehat{\Pi}^i$, $i=M,R$, have been introduced in (\ref{eq:5})-~(\ref{eq:24}).
Let $\Pi:   \Gamma_\eta^{M,R}\times \R \to \R^2$,  $(z, p_2)  \mapsto \left( \Pi^M(z,p_2),\Pi^R(z,p_2) \right)$ be such that, for all $(z,p_2)$
\begin{equation}
\label{eq:39}
  \begin{split}
\overline{\Pi}^M (p_2)   &\le   \Pi^M(z,p_2)   \le  \widehat{\Pi}^M(p_2). \\
\overline{\Pi}^R (p_2)   &\le   \Pi^R(z,p_2)   \le  \widehat{\Pi}^R(p_2) .
  \end{split}
\end{equation}
For $\bar z\in  \Gamma_\eta^{M,R}$, the reduced set of test-functions  $\cR^\Pi(\bar z)$ associated to the map $\Pi$ is
the set of the functions  $\varphi\in \cC^0 (\R^2)$ such that there exists a $\cC^1$ function $\psi: \Gamma_\eta^{M,R}   \to \R$ with 
\begin{equation}
\label{eq:40}
\varphi(z+ t e_1)= \psi(z)+   \left( \Pi^R\left(\bar z, \partial_{z_2}\psi(\bar z) \right) 1_{t>0}+\Pi^M \left(\bar z,  \partial_{z_2}\psi(\bar z)\right) 1_{t<0} \right) t.
\end{equation}
\end{definition}
The following theorem is reminiscent of \cite[Theorem~2.7]{MR3621434}.
\begin{theorem} \label{th:restriction_set_of_test_functions}
 Let $u:\R^2\to \R$ be a subsolution (resp. supersolution) of (\ref{def:HJeffective2}) and (\ref{def:HJeffective3}).
Consider a map $\Pi:\Gamma_\eta^{M,R}\times  \R\to \R^2$, $(z, p_2)  \mapsto \left( \Pi^M(z,p_2),\Pi^R(z,p_2) \right)$ such that (\ref{eq:39}) holds for all $(z,p_2)\in \Gamma_\eta^{M,R}\times \R$.
\\
We assume furthermore that $u$ is Lipschitz continuous in $\Gamma_\eta^{M,R}+ B(0,r)$ for some $r>0$.
The function $u$ is a subsolution (resp. supersolution) of (\ref{def:HJeffective5})
 if and only if for any $z\in \Gamma_\eta^{M,R}$  and for all $\varphi \in \cR^\Pi(z)$ such that $u-\varphi$ has a local maximum (resp. local minimum) at $z$,
\begin{equation}
\label{eq:th_restriction_set_of_test_functions}
\lambda u(z)+\max\left(E^{M,R}(\partial_{z_2}\varphi(z)), H^{M,R}( D \varphi^M(z),D \varphi^R(z)) \right) \le 0, \quad (\hbox{resp. }\ge 0).
\end{equation}
\end{theorem}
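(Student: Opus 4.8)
\medskip
\noindent\textbf{Proof strategy.}
The plan is to use the reduced-test-function technique of \cite[Theorem~2.7]{MR3621434}, in the form employed in \cite{MR3565416}. It suffices to treat the interface $\Gamma^{M,R}_\eta$ (that of $\Gamma^{L,M}_\eta$ is identical) and the subsolution case (the supersolution one being obtained by reversing inequalities, replacing local maxima by local minima, and exchanging the roles of lowering and raising the one-sided slopes). Fix $z\in\Gamma^{M,R}_\eta$. First I would record an algebraic fact and dispatch the easy implication: for $\varphi\in\cR^\Pi(z)$ with $p_2=\partial_{z_2}\varphi(z)$ one has $D\varphi^M(z)=p_2e_2+\Pi^M(z,p_2)e_1$ and $D\varphi^R(z)=p_2e_2+\Pi^R(z,p_2)e_1$; by \eqref{eq:39}, the definitions \eqref{eq:5}--\eqref{eq:24}, the splitting of $H^{\pm,1,i}$ recalled after \eqref{def:EMR_0}, the convexity of the $H^i$, and Proposition~\ref{cor:E_bigger_than_E_0}, one checks that $\Pi^M(z,p_2)\le p^{+,M}_{1,0}(p_2)$ and $\Pi^R(z,p_2)\ge p^{-,R}_{1,0}(p_2)$, so that $H^{+,1,M}(D\varphi^M(z))=E_0^M(p_2)$, $H^{-,1,R}(D\varphi^R(z))=E_0^R(p_2)$, and hence, by \eqref{eq:31} and \eqref{def:EMR_0}, $H^{M,R}(D\varphi^M(z),D\varphi^R(z))=E^{M,R}_0(p_2)\le E^{M,R}(p_2)$. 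Thus on $\cR^\Pi(z)$ both \eqref{def:HJeffective5} and \eqref{eq:th_restriction_set_of_test_functions} reduce to $\lambda u(z)+E^{M,R}(\partial_{z_2}\varphi(z))\le0$; since $\cR^\Pi(z)$ lies inside the class of admissible test functions for \eqref{def:HJeffective_short}, the ``only if'' implication follows immediately.

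For the converse, let $\phi$ be an admissible test function for \eqref{def:HJeffective_short} with $u-\phi$ having a local maximum at $z$; set $p_2=\partial_{z_2}\phi(z)$, let $p^M_1,p^R_1$ be the $e_1$-components of $D\phi^M(z),D\phi^R(z)$, and $p^M=p_2e_2+p^M_1e_1$, $p^R=p_2e_2+p^R_1e_1$. By standard perturbations (adding $|x-z|^2$ to make the maximum strict; using that $\phi$ is $\cC^1$ up to the interface on each side) I may assume that, writing points near $z$ as $z+te_1+se_2$, $\phi$ coincides with $\phi(z)+p_2s+p^M_1t\,1_{t\le0}+p^R_1t\,1_{t\ge0}$ up to a term $o(|t|+|s|)$ which is $\cC^1$ on each side. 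For a real $q$ let $\phi_q^M$ (resp.\ $\phi_q^R$) denote the function obtained from $\phi$ by replacing its $e_1$-slope on $\Omega^M_\eta=\{t<0\}$ (resp.\ on $\Omega^R_\eta=\{t>0\}$) by $q$. Because $u$ is Lipschitz near $\Gamma^{M,R}_\eta$ (this is where that hypothesis is used), the supremum $q^\star_M$ of the $q$'s for which $u-\phi_q^M$ still has a local maximum at $z$ is finite and $\ge p^M_1$, and likewise the infimum $q^\star_R$ of the $q$'s for which $u-\phi_q^R$ still has a local maximum at $z$ is finite and $\le p^R_1$. Localising, for $q$ decreasing to $q^\star_M$ (resp.\ increasing to $q^\star_R$), the maximum of $u-\phi_q^M$ (resp.\ $u-\phi_q^R$) at an interior point of $\Omega^M_\eta$ (resp.\ $\Omega^R_\eta$) and invoking the subsolution properties \eqref{def:HJeffective2}, \eqref{def:HJeffective3} yields $\lambda u(z)+H^M(p_2e_2+q^\star_Me_1)\le0$ and $\lambda u(z)+H^R(p_2e_2+q^\star_Re_1)\le0$. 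Since $p_1\mapsto H^{+,1,M}(p_2e_2+p_1e_1)$ is non-decreasing with minimum $E_0^M(p_2)$, $H^M\ge H^{+,1,M}$ and $q^\star_M\ge p^M_1$ (and symmetrically for the $R$-branch with $H^{-,1,R}$ non-increasing), this gives $\lambda u(z)+H^{+,1,M}(p^M)\le0$ and $\lambda u(z)+H^{-,1,R}(p^R)\le0$, hence $\lambda u(z)+H^{M,R}(p^M,p^R)\le0$ by \eqref{eq:31}, which is half of the required inequality.

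It remains to prove $\lambda u(z)+E^{M,R}(p_2)\le0$, and this is where the reduced-set hypothesis enters. If $q^\star_M\ge\Pi^M(z,p_2)$ and $q^\star_R\le\Pi^R(z,p_2)$, then $u$ minus the function obtained from $\phi$ by setting its $\Omega^M_\eta$-slope to $\Pi^M(z,p_2)$ and its $\Omega^R_\eta$-slope to $\Pi^R(z,p_2)$ (the tangential gradient staying $p_2$) still has a local maximum at $z$ — on $\{t<0\}$ lowering a slope raises the function, on $\{t>0\}$ raising it does — and, after replacing it by a genuine element $\varphi\in\cR^\Pi(z)$ (possible by standard arguments, using the strictness), the hypothesis and the first paragraph give $\lambda u(z)+E^{M,R}(p_2)\le0$. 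Otherwise, say $q^\star_M<\Pi^M(z,p_2)$: then $p_2e_2+q^\star_Me_1$ lies on the non-increasing branch of $H^M$ at or above the level $E^{M,R}(p_2)$ (by \eqref{eq:5}--\eqref{eq:6} and convexity), so $H^M(p_2e_2+q^\star_Me_1)\ge E^{M,R}(p_2)$ and the inequality $\lambda u(z)+H^M(p_2e_2+q^\star_Me_1)\le0$ already forces $\lambda u(z)+E^{M,R}(p_2)\le0$; the cases with $q^\star_R>\Pi^R(z,p_2)$ are symmetric, and the degenerate situations $E_0^M(p_2)=E^{M,R}(p_2)$ or $E_0^R(p_2)=E^{M,R}(p_2)$ are absorbed into $E^{M,R}_0=\max(E_0^M,E_0^R)$ there (cf.\ Remark~\ref{rmk:equality_bar_Pi_and_hat_Pi}). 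Together with the previous paragraph this proves the converse, and the supersolution statement follows by the symmetric argument.

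I expect the main obstacle to be this final step — proving that one always lands in one of the two alternatives above (manufacture a genuine $\varphi\in\cR^\Pi(z)$ touching $u$ at $z$ with the right tangential slope, or else extract the flux-limiter inequality directly from $\lambda u(z)+H^M(p_2e_2+q^\star_Me_1)\le0$, resp.\ the $R$-analogue) — which demands careful bookkeeping of the positions of $p^M_1,p^R_1,q^\star_M,q^\star_R$ relative to $p^{\pm,i}_{1,0}(p_2)$ and to $\overline{\Pi}^i(p_2),\widehat{\Pi}^i(p_2)$, together with a separate treatment of whether $E^{M,R}(p_2)$ strictly exceeds $E_0^i(p_2)$ or equals it. A second, more routine difficulty — already handled in \cite{MR3565416,MR3441209} — is the rigorous localisation of the perturbed maxima at interior points and the passage from a ``nearly touching'' function to a genuine reduced test function; both rest on the Lipschitz regularity of $u$ near $\Gamma^{M,R}_\eta$ built into the statement.
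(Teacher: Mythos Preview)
Your proposal is correct and follows essentially the same approach as the paper, which does not give a detailed proof but defers to \cite[Theorem~2.7]{MR3621434} and \cite[Appendix~C]{MR3565416}; the critical-slope technique you describe (defining $q^\star_M$, $q^\star_R$, extracting the interior inequalities $\lambda u(z)+H^i(p_2e_2+q^\star_ie_1)\le0$ from \eqref{def:HJeffective2}--\eqref{def:HJeffective3}, and then splitting into the two alternatives to recover the flux-limiter bound) is exactly the argument those references contain. Your bookkeeping in the final step is sound: when $E^{M,R}(p_2)>E_0^M(p_2)$ one has $\Pi^M(z,p_2)=\overline\Pi^M(p_2)=\widehat\Pi^M(p_2)$ so $q^\star_M<\Pi^M(z,p_2)$ forces $q^\star_M$ onto the strictly decreasing branch of $H^M$ above level $E^{M,R}(p_2)$, while when $E^{M,R}(p_2)=E_0^M(p_2)$ the inequality $H^M\ge E_0^M=E^{M,R}$ is automatic.
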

\begin{proof}
The proof  follows the lines of that of \cite[Theorem~2.7]{MR3621434} and is also given in \cite[Appendix C]{MR3565416}
\end{proof}
\begin{remark}
  \label{sec:reduced-set-test-1}
In the statement of Theorem \ref{th:restriction_set_of_test_functions}, we have chosen to restrict ourselves to functions that are Lipschitz continuous in $\Gamma^ {M,R}_\eta+ B(0,r)$ (this property makes the proof simpler); indeed, since the functions $v_{\eta, \epsilon}$ are  Lipschitz continuous with a Lipschitz constant $\Lambda$ independent of $\epsilon$ (and also of $\eta$), the relaxed semi-limits of $v_{\eta, \epsilon}$ as $\epsilon\to 0$ are also  Lipschitz continuous with the same Lipschitz constant $\Lambda$, see (\ref{eq:84}) below and  \cite[Remark 3.1]{MR3565416}. \\
In fact, a more general version of  Theorem \ref{th:restriction_set_of_test_functions} can be stated for any  lower semi-continuous supersolution, and for the  upper semi-continuous subsolutions $u$ such that  for all $z\in \Gamma^ {M,R}_\eta$, $ u(z)= \limsup_{z'\to z, z'\in \Omega^i_\eta} u(z')$, $\forall i=M,R$,
as in \cite{MR3621434,imbert:hal-01073954}.
\end{remark}
\subsubsection{Proof of Theorem~\ref{th:convergence_result}}
\label{sec:proof-theor-refth:c-1}
Let us  consider the relaxed semi-limits
 \begin{equation}
\label{eq:84}
 \overline{v_\eta}(z)={\limsup_\epsilon}^{*} {v}_{\eta,\epsilon}(z)=\limsup_{z'\to z, \epsilon\to 0}{v}_{\eta,\epsilon}(z')
 \quad \mbox{ and } \quad \underline{v_\eta}(z)=\underset{\epsilon}{{\liminf}_{*}}{v}_{\eta,\epsilon}(z)
=\liminf_{z'\to z, \epsilon\to 0}{v}_{\eta,\epsilon}(z').
 \end{equation}
Note that  $ \overline{v_\eta}$ and $ \underline{v_\eta}$ are well defined, 
since  $\left( v_{\eta,\epsilon}\right)_\epsilon$ is uniformly bounded.  We will prove that $\overline{v_\eta}$  and $\underline{v_\eta}$  are respectively a subsolution and a supersolution of \eqref{def:HJeffective_short}.
It is classical to check that the functions $\overline{v_\eta}(z)$ and $\underline{v_\eta}(z)$
 are respectively a bounded subsolution and a bounded supersolution in $\Omega^i_\eta$, $i=L,M,R$, of
\begin{equation}
\label{eq:85}
\l u(z)+H^i(Du(z))= 0.
\end{equation}
From comparison theorems proved in \cite{barles2013bellman,imbert:hal-01073954,oudet2014}, this will imply that  $\overline{v_\eta}=\underline{v_\eta}=v_\eta=\lim_{\epsilon\to 0} v_{\eta,\epsilon}$.
 We just have to check the transmission conditions (\ref{def:HJeffective4}) and (\ref{def:HJeffective5}), and it is enough to focus on the latter, since the former is dealt with in a very same manner. 
\\
We focus on    $\overline{v_\eta}$ since the proof for  $\underline{v_\eta}$ is similar.
\\
 
We are going to use Theorem \ref{th:restriction_set_of_test_functions} with the special choice for
 the map $\Pi:    \R \to \R^2$: 
$\Pi(p_2)= \left( \widehat \Pi^M   (p_2) ,\overline \Pi^R   (p_2)       \right)$. 
Note that  Theorem \ref{th:restriction_set_of_test_functions} can indeed be applied,
 because, $\overline{v_\eta}$ is Lipschitz continuous, see Remark~\ref{sec:reduced-set-test-1}.
Take $\bar z\in \Gamma_\eta^{M,R}$ and a  test-function $\varphi\in \cR^\Pi(\bar z)$, i.e. of the form 
\begin{equation}
\label{eq:86}
\varphi(z+ t e_1)= \psi(z)+   \left( \overline \Pi^R\left( \partial_{z_2}\psi(\bar z) \right) 1_{t>0}+
\widehat \Pi^M \left(  \partial_{z_2}\psi(\bar z)\right) 1_{t<0} \right) t, \quad \forall z\in  \Gamma_\eta^{M,R} , t\in \R,
\end{equation}
for  a $\cC^1$ function $\psi: \Gamma_\eta^{M,R} \to \R$,  such that $\overline{v_\eta}-\varphi$ has a strict local maximum at $\bar z$ and that $\overline{v_\eta}(\bar z)=\varphi(\bar z)$. \\
 Let us argue by contradiction with (\ref{eq:th_restriction_set_of_test_functions}) and assume that 
\begin{equation}
\label{eq:proof_convergence_sub_contradiction}
\lambda \varphi(\bar z)+ \max\left(E^{M, R}(\partial_{z_2}\varphi(\bar z)), H^{M,R}(D \varphi^M(\bar z),D \varphi^R(\bar z)) \right)=\theta >0.
\end{equation}
From  (\ref{eq:86}),  we see that $
 H^{M,R}(D \varphi^M(\bar z),D \varphi^R(\bar z))\le E^{M, R} (\partial_{z_2}\varphi(\bar z))$ and
 (\ref{eq:proof_convergence_sub_contradiction}) is equivalent to 
\begin{equation}
\label{eq:proof_convergence_sub_contradiction_bis}
\lambda \psi(\bar z)+E^{M, R}(   \partial_{z_2}\psi(\bar z) )=\theta>0.
\end{equation}
Let  $\chi( \partial_{z_2}\psi(\bar z),\cdot)$ be a solution of (\ref{cellpE})
 such that $\chi(\partial_{z_2}\psi(\bar z),0)=0$ (see Theorem~\ref{thm:stability_from_truncated_cell_pb_to_global_cell_pb}),
 and $W(\partial_{z_2}\psi(\bar z),z_1)=\lim_{\epsilon\to 0}\epsilon\chi(\partial_{z_2}\psi(\bar z),\frac {z-\eta e_1} {\epsilon})$.
\paragraph{Step 1}
Hereafter, we will consider a small positive radius $r$ such that $r< \eta/4$. Then for $\epsilon$ small enough, $\Omega_{\eta, \epsilon}^i \cap B(\bar z, r)= \left( \eta e_1 + \epsilon \widetilde \Omega_{\eta}^i \right)\cap B(\bar z, r)$ for $i=L,R$.  We claim that for $\epsilon$ and $r$ small enough, the function $\varphi^\epsilon$:
 \[\varphi^\epsilon(z)=\psi(\eta e_1+ z_2 e_2)+\epsilon\chi( \partial_{z_2}\psi(\bar z),\frac {z-\eta e_1} {\epsilon})\] 
is a viscosity supersolution of 
\begin{equation}
\label{cellpE_modifed}
\left\{
    \begin{array}[c]{rcll}
   \lambda \varphi^\epsilon(z)+ H^i(D\varphi^\epsilon(z))& \ge& \frac{\theta}{2}\quad &\hbox{ if } z\in\Omega^i_{\eta,\epsilon}\cap B(\bar z,r),\; i=L,R, \\
   \lambda \varphi^\epsilon(z)+H_{\Gamma_{\eta, \epsilon}} (z,D\left( \varphi^\epsilon\right)^L(z),D\left( \varphi^\epsilon\right)^R(z)) &\ge& \frac{\theta}{2}&\hbox{ if } z\in\Gamma_{\eta,\epsilon}\cap B(\bar z,r),
    \end{array}
\right.
\end{equation}
where $H^i$ and $H_{\Gamma_{\eta, \epsilon}} $ are defined in (\ref{eq:7})-(\ref{eq:8}).
\\
Indeed, if $\xi$ is a test-function in $\cR_{\eta,\epsilon}$ such that $\varphi^\epsilon-\xi$ has a local minimum at
  $z^\star\in B(\bar z,r)$, then, from the definition of $\varphi^\epsilon$,
 $y\mapsto \chi( \partial_{z_2}\psi(\bar z),y-\frac \eta \epsilon e_1)-\frac{1}{\epsilon}\left(\xi(\epsilon y)-\psi(\eta e_1+\epsilon y_2 e_2) \right)$
 has a local minimum at $\frac{z^\star}{\epsilon}$.
\\
If $\frac{z^\star - \eta e_1}{\epsilon}\in \widetilde \Omega^i_\eta$, for $i=L$ or  $R$, then 
$H^i(D\xi(z^\star) -\partial_{z_2} \psi(\eta e_1+ z^\star_2 e_2) e_2  + \partial_{z_2}\psi(\bar z) e_2)
\ge E^{M,R}(\partial_{z_2}\psi(\bar z))$.
From the regularity properties of   $H^i$,
 \begin{displaymath}
H^i(D\xi(z^\star) -\partial_{z_2} \psi(\eta e_1+ z^\star_2 e_2) e_2  + \partial_{z_2}\psi(\bar z) e_2)
= H^i(D\xi(z^\star)) + o_{r\to 0}(1),
\end{displaymath}
thus
\begin{equation*}
\label{eq:proof_convergence_case1_nb1}
\lambda \varphi^\epsilon(z^\star)+ H^i(D\xi(z^\star)) 
\ge E^{M,R}(\partial_{z_2}\psi(\bar z)) +\lambda
\left(\psi(\eta e_1+ z^\star_2 e_2)+\epsilon\chi( \partial_{z_2}\psi(\bar z),\frac {z^\star-\eta e_1} {\epsilon})\right)
+o_{r\to 0}(1).  
\end{equation*}
From \eqref{eq:proof_convergence_sub_contradiction_bis}, this implies that 
\begin{equation*}
\lambda \varphi^\epsilon(z^\star)+ H^i(D\xi(z^\star))\ge \theta +\lambda \epsilon\chi( \partial_{z_2}\psi(\bar z),\frac {z^\star-\eta e_1} {\epsilon})+o_{r\to 0}(1).
\end{equation*}
Recall that the function $y\mapsto \epsilon \chi( \partial_{z_2}\psi(\bar z),\frac {y-\eta e_1} {\epsilon})$
 converges locally uniformly to  $y\mapsto W(\partial_{z_2}\psi(\bar z) ,y)$,
which is a Lipschitz continuous function, independent of $y_2$ and such that   $W(\partial_{z_2}\psi(\bar z),0)=0$.
 Therefore, for $ \eta$ and $r$ small enough, $\lambda \varphi^\epsilon(z^\star)+ H^i(D\xi(z^\star))  
 \ge \frac{\theta}{2}$.
\\
If $\frac{ z^\star -\eta e_1}{\epsilon}\in \widetilde \Gamma_\eta$, then, we have 
\[H^{+,L}_{\widetilde \Gamma_\eta} (D\xi^L(z^\star) -\partial_{z_2} \psi(\eta e_1+ z^\star_2 e_2) e_2  + \partial_{z_2}\psi(\bar z) e_2, \frac{ z^\star -\eta e_1}{\epsilon}   )
\ge E^{M,R}(\partial_{z_2}\psi(\bar z))\]
or 
\[H^{-,R}_{\widetilde \Gamma_\eta} (D\xi^R(z^\star) -\partial_{z_2} \psi(\eta e_1+ z^\star_2 e_2) e_2  + \partial_{z_2}\psi(\bar z) e_2, \frac{ z^\star -\eta e_1}{\epsilon})\ge E^{M,R}(\partial_{z_2}\psi(\bar z)).\]
Since the Hamiltonians $H^{\pm,i}_{\widetilde \Gamma_\eta}$ enjoy the same regularity properties as  $H^{ i}$,
 it is possible to use the same arguments as  in the case when $\frac{z^\star-\eta e_1}{\epsilon}\in \Omega^i$.
 For  $r$ and $\epsilon$ small enough, 
\[\lambda \varphi^\epsilon(z^\star)+H_{\Gamma_{\eta, \epsilon}} (z^\star,D\left( \varphi^\epsilon\right)^L(z^\star),D\left( \varphi^\epsilon\right)^R(z^\star)) \ge \frac{\theta}{2}.
   \] 
The claim that $\varphi^\epsilon$  is a supersolution of \eqref{cellpE_modifed} is proved.

\paragraph{Step 2} Let us prove that there exist some positive constants $K_r>0$ and  $\epsilon_0>0$ such that
\begin{equation}
\label{eq:proof_convergence_case1_nb3}
 v_{\eta,\epsilon}(z)+K_r\le \varphi^\epsilon(z), \quad \forall z\in \partial B(\bar z, r), \;\forall \epsilon\in (0, \epsilon_0).
\end{equation}
Indeed, since $\overline{v_\eta}-\varphi$ has a strict local maximum at $\bar z$ 
and since $\overline{v_\eta}(\bar z)=\varphi(\bar z)$, 
there exists a positive constant $\tilde K_r>0$ such that 
$\overline{v_\eta}(z)+\tilde K_r\le \varphi(z)$ for any $z\in \partial B(\bar z, r)$.
Since $\ds \overline{v_\eta}={\limsup_\epsilon}^{*} v_{\eta,\epsilon}$,  there exists $\tilde \epsilon_0>0$ such that 
\begin{equation}
\label{eq:87}
v_{\eta,\epsilon}(z)+\frac{\tilde K_r}{2}\le \varphi(z)\quad \hbox{
for any $0<\epsilon<\tilde \epsilon_0$ and $z\in \partial B(\bar z, r)$}.
\end{equation}
On the other hand, from \eqref{eq:control_slopes_W_summary} in Proposition \ref{cor:control_slopes_W}, 
\begin{equation}
 \label{eq:88}
 \begin{split}
   &\psi(z_2 e_2+\eta e_1)+W( \partial_{z_2}\psi(\bar z),z)\\ \ge&
 \psi(z_2 e_2+\eta e_1)+   \left( \overline \Pi^R   ( \partial_{z_2}\psi(\bar z) ) 1_{z_1>\eta}
+\widehat \Pi^M( \partial_{z_2}\psi(\bar z)) 1_{z_1<\eta} \right) (z_ 1-\eta) = \varphi(z).    
 \end{split}
\end{equation}
Moreover,  $z\mapsto \varphi^\epsilon(z)$ converges locally uniformly to
 $z\mapsto \psi(\eta e_1+z_2e_2)+W( \partial_{z_2}\psi(\bar z),z)$ as $\epsilon$ tends to $0$.
By collecting the latter observation, (\ref{eq:88}) and (\ref{eq:87}), we get \eqref{eq:proof_convergence_case1_nb3} 
for some constants $K_r>0$ and $\epsilon_0>0$.

\paragraph{Step 3}
From the previous steps, we find by comparison that for $r$ and $\epsilon$ small enough, 
\begin{displaymath}
 v_{\eta,\epsilon}(z)+K_r\le \varphi^\epsilon(z) \quad \quad \forall z \in B(\bar z, r).
\end{displaymath}
Setting $z=\bar z$ and taking the $\limsup$ as  $\epsilon\to 0$,  we obtain
\begin{displaymath}
\overline{v_\eta}(\bar z)+K_r\le \psi(\bar z)=\varphi(\bar z)=\overline{v_\eta}(\bar z),
\end{displaymath}
which cannot happen. The proof is completed. \qed
\begin{remark}
For the proof of the supersolution inequality, the test-function $\varphi$ should be chosen  of the form 
\begin{displaymath}
\varphi(z+ t e_1)= \psi(z)+   \left( \widehat \Pi^R\left( \partial_{z_2}\psi(\bar z) \right) 1_{t>0}+
\overline \Pi^M \left(  \partial_{z_2}\psi(\bar z)\right) 1_{t<0} \right) t, \quad \forall z\in  \Gamma_\eta^{M,R} , t\in \R,
\end{displaymath}
where $\psi\in \cC^1(\R)$.
\end{remark}

\section{The second passage to the limit: $\eta$ tends to $0$}
\label{sec:second-passage-limit}

We now aim  at passing to the limit in (\ref{def:HJeffective_short}) as $\eta$ tends to $0$.
Recall that $\Omega^L$, $\Omega^R$ and $\Gamma$ are defined in  (\ref{eq:104}).

\subsection{Main result}\label{sec:main-result-1}
\begin{theorem}\label{sec:main-result-2}
   As $\eta\to 0$, $v_{\eta}$ converges locally uniformly  to $v$, the unique bounded viscosity solution of 
(\ref{eq:14})-~(\ref{eq:17}), ( for short (\ref{eq:16})).
\\
In the transmission condition (\ref{eq:17}), namely
\begin{displaymath}
    \lambda v(z)+\max\left(E(\partial_{z_2}v(z)),  \;H^{+,1,L}(D v^L(z)),\;H^{-,1,R} (D v^R(z) \right) = 0  \hbox{if } z\in \Gamma,
\end{displaymath}
 the effective flux limiter $E$ is given for $p_2\in \R$ by
\begin{equation}\label{eq:18}
  E(p_2)= \max(E^{L,M}(p_2), E^{M,R}(p_2) ),
\end{equation}
where $E^{L,M}$ and $ E^{M,R}$ are defined in \S \ref{sec:passage-limit-as}, see ~(\ref{eq:cor:E_bigger_than_E_0}).
\end{theorem}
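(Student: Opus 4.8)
The plan is to follow the same route as the proof of Theorem~\ref{th:convergence_result} in \S\ref{sec:proof-theor-refth:c-1}, with $\eta$ now playing the role of the small parameter and the strip $\Omega^M_\eta$ collapsing onto $\Gamma$. I first introduce the relaxed semi-limits $\overline v(z)=\limsup^{*}_{\eta\to0}v_\eta(z)$ and $\underline v(z)=\liminf_{*,\,\eta\to0}v_\eta(z)$, which are well defined because $(v_\eta)_\eta$ is uniformly bounded, and which are moreover Lipschitz continuous with the constant $\Lambda$ (uniform in $\eta$, coming from the coercivity assumption [H3]) shared by all the $v_\eta$. For $z\in\Omega^L$ (resp.\ $z\in\Omega^R$) and $\eta$ small enough, $v_\eta$ solves $\lambda v_\eta+H^L(Dv_\eta)=0$ (resp.\ $\lambda v_\eta+H^R(Dv_\eta)=0$) on a fixed neighbourhood of $z$, so by the stability of viscosity sub/supersolutions $\overline v$ is a subsolution and $\underline v$ a supersolution of~(\ref{eq:14}) in $\Omega^L$ and of~(\ref{eq:15}) in $\Omega^R$. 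Everything then reduces to showing that $\overline v$ is a subsolution and $\underline v$ a supersolution of the transmission condition~(\ref{eq:17}) on $\Gamma$ with $E$ given by~(\ref{eq:18}); the comparison principle for~(\ref{eq:16}), which holds since this is a flux-limited transmission problem of the type treated in \cite{oudet2014,imbert:hal-01073954,barles2013bellman}, then gives $\overline v=\underline v=:v$ and the local uniform convergence $v_\eta\to v$.

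To handle the condition on $\Gamma$ I would use the analogue of Theorem~\ref{th:restriction_set_of_test_functions} for the flat interface $\Gamma$ (as in \cite{MR3621434} and \cite[Appendix~C]{MR3565416}): since $E=\max(E^{L,M},E^{M,R})$ is Lipschitz and, by Proposition~\ref{cor:E_bigger_than_E_0} and its counterpart at $\Gamma^{L,M}_\eta$, satisfies $E(p_2)\ge E_0^L(p_2)$ and $E(p_2)\ge E_0^R(p_2)$, the slopes $\overline\Pi^L,\widehat\Pi^L,\overline\Pi^R,\widehat\Pi^R$ defined as in~(\ref{eq:5})--(\ref{eq:24}), with $E$ in place of $E^{M,R}$ and with $H^{-,1,L}$ on the left and $H^{+,1,R}$ on the right, are well defined. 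For the subsolution inequality at $\bar z\in\Gamma$ I would test with $\varphi(z+te_1)=\psi(z)+\bigl(\overline\Pi^R(\partial_{z_2}\psi(\bar z))\,1_{t>0}+\widehat\Pi^L(\partial_{z_2}\psi(\bar z))\,1_{t<0}\bigr)t$, $\psi\in\cC^1(\Gamma)$, with $\overline v-\varphi$ having a strict local maximum at $\bar z$ and $\overline v(\bar z)=\varphi(\bar z)$; for such $\varphi$ one has $H^{L,R}(D\varphi^L(\bar z),D\varphi^R(\bar z))\le E(\partial_{z_2}\varphi(\bar z))$, so assuming~(\ref{eq:17}) fails the target inequality reduces to $\lambda\psi(\bar z)+E(p_2)=\theta>0$, where $p_2=\partial_{z_2}\psi(\bar z)$.

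The heart of the argument is the perturbed test-function / comparison step of Steps~1--3 of \S\ref{sec:proof-theor-refth:c-1}. One builds a one-variable Lipschitz corrector $Z(p_2,\cdot):\R\to\R$ that is affine with slope $\widehat\Pi^L(p_2)$ on $(-\infty,-1]$, affine with slope $\overline\Pi^R(p_2)$ on $[1,+\infty)$, and has a constant slope $q_M$ on $(-1,1)$ with $H^M(q_Me_1+p_2e_2)\ge E(p_2)$, the value $q_M$ being taken on the decreasing (resp.\ increasing) branch of $H^M$ according to whether $E^{M,R}(p_2)$ or $E^{L,M}(p_2)$ realizes the maximum in~(\ref{eq:18}); using the coercivity and convexity of $H^L,H^M,H^R$, the structure~(\ref{eq:28}),~(\ref{eq:31}) of the junction Hamiltonians, and the definitions~(\ref{def:E_0^i})--(\ref{def:EMR_0}), one checks that this choice makes both junction conditions at $y_1=\pm1$ hold with right-hand side $E(p_2)$ --- this is precisely where $\max(E^{L,M},E^{M,R})$ emerges as the only compatible common value. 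Setting $\varphi^\eta(z)=\psi(z_2e_2)+\eta\,Z(p_2,z/\eta)$, one shows as in Step~1 that for $\eta$ and $r$ small $\varphi^\eta$ is a viscosity supersolution of $\lambda u+\mathcal{H}_\eta(z,Du)=\theta/2$ on $B(\bar z,r)$, using that in each $\Omega^i_\eta\cap B(\bar z,r)$ and at each of $\Gamma^{L,M}_\eta\cap B(\bar z,r)$, $\Gamma^{M,R}_\eta\cap B(\bar z,r)$ the relevant Hamiltonian evaluated at $D\varphi^\eta$ is $\ge E(p_2)+o_{r\to0}(1)$ while $\lambda\varphi^\eta\to\lambda\psi(\bar z)=\theta-E(p_2)$. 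Since $\eta Z(p_2,z/\eta)$ converges locally uniformly to $\bigl(\overline\Pi^R(p_2)1_{z_1>0}+\widehat\Pi^L(p_2)1_{z_1<0}\bigr)z_1$, one has $\varphi^\eta\to\varphi$ locally uniformly, so (using the strict maximum and $\overline v=\limsup^{*}v_\eta$) $v_\eta+K_r\le\varphi^\eta$ on $\partial B(\bar z,r)$ for some $K_r>0$ and $\eta$ small; taking $K_r\le\theta/(2\lambda)$ and comparing on $B(\bar z,r)$ gives $v_\eta+K_r\le\varphi^\eta$ there, and evaluating at $\bar z$ and letting $\eta\to0$ contradicts $\overline v(\bar z)=\varphi(\bar z)$. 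The supersolution inequality for $\underline v$ is obtained in the same way, testing with slopes $\overline\Pi^L$ on the left and $\widehat\Pi^R$ on the right and using a corrector $Z$ with $H^i(Z'e_1+p_2e_2)\le E(p_2)$ in each region, which is possible since $E(p_2)\ge\max(E^{L,M}(p_2),E^{M,R}(p_2),E_0^L(p_2),E_0^M(p_2),E_0^R(p_2))$ makes all the junction inequalities automatic.

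The step I expect to be the main obstacle is the construction and verification of the corrector $Z$ across the collapsing strip: one has to satisfy the two flux-limited junction conditions at $y_1=\pm1$ simultaneously with a single ergodic value $E(p_2)$, and pin the slopes at $\pm\infty$ to exactly $\widehat\Pi^L(p_2),\overline\Pi^R(p_2)$ (resp.\ $\overline\Pi^L(p_2),\widehat\Pi^R(p_2)$) so that $\eta Z(p_2,\cdot/\eta)$ reproduces the affine profile of the reduced test-function. Equivalently, one must show that the limit cell-problem on the strip $\{|y_1|<1\}$ with Hamiltonians $H^L,H^M,H^R$ and flux limiters $E^{L,M}$ at $y_1=-1$ and $E^{M,R}$ at $y_1=1$ has ergodic constant exactly $\max(E^{L,M}(p_2),E^{M,R}(p_2))$; this rests, in a slightly delicate way, on the monotonicity and convexity of the one-dimensional sections $p_1\mapsto H^i(p_1e_1+p_2e_2)$ and on~(\ref{def:E_0^i})--(\ref{def:EMR_0}). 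Everything else --- the uniform Lipschitz and coercivity bounds on $E$, the $o_{r\to0}(1)$ control, and the $\limsup$/$\liminf$ passages --- is routine and parallel to \S\ref{sec:proof-theor-refth:c-1} and to \cite{MR3441209,MR3565416}.
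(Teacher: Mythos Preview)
Your proposal is correct and follows the same overall architecture as the paper (relaxed semi-limits, reduced test-functions, Evans' perturbed test-function argument, comparison principle), but you take a genuinely different route at the key step, namely the construction of the corrector and the identification of $E(p_2)$.

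The paper does \emph{not} build the corrector by hand. Instead it introduces a one-dimensional truncated cell problem on $[-\rho,\rho]$ with state constraints at $\pm\rho$ (equation~(\ref{eq:20})), obtains a unique ergodic constant $\mu_\rho(p_2)$, passes to the limit $\rho\to\infty$ to get $E(p_2)$ and a Lipschitz solution $\psi(p_2,\cdot)$ of the global cell problem~(\ref{eq:22}), and then proves \emph{a posteriori} (Proposition~\ref{sec:proof-refeq:18}) by a case analysis on the possible shapes of $\psi(p_2,\cdot)$ on $(-1,1)$ that $E(p_2)=\max(E^{L,M}(p_2),E^{M,R}(p_2))$. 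The slopes at $\pm\infty$ are controlled by Lemma~\ref{sec:global-cell-problem-2}, itself proved like Proposition~\ref{cor:slopes_omega}. The final perturbed test-function step is only mentioned and referred to \cite{MR3565416,MR3441209}.

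Your approach bypasses all of this by writing down an explicit piecewise-affine $Z$ with slopes $\widehat\Pi^L(p_2)$, $q_M$, $\overline\Pi^R(p_2)$, choosing the branch of $q_M$ according to which of $E^{L,M},E^{M,R}$ realises the maximum. This is more elementary and shorter: the verification of the two junction conditions and the affine behaviour at infinity are then immediate, so you avoid both the truncated-problem machinery and the case analysis of Proposition~\ref{sec:proof-refeq:18}. Your identification of the ``main obstacle'' is exactly the content the paper handles through that proposition.

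What the paper's route buys in exchange is the characterisation of $E(p_2)$ as the \emph{unique} ergodic constant for the global cell problem~(\ref{eq:22}) (Remark~\ref{sec:proof-refeq:18-1}), and a corrector $\psi$ obtained by the same limiting procedure as in \S\ref{sec:effect-probl-obta} and \S\ref{sec:simult-pass-limit}. This uniformity is exploited in \S\ref{sec:simult-pass-limit}: Theorem~\ref{sec:trunc-cell-probl-3} identifies the limit of $(\xi_{\epsilon_n},E_{\epsilon_n})$ as a solution of~(\ref{eq:22}) and then invokes the uniqueness to conclude $E_\epsilon(p_2)\to\max(E^{L,M}(p_2),E^{M,R}(p_2))$. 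Your direct construction proves Theorem~\ref{sec:main-result-2} cleanly, but does not by itself give this uniqueness; if you later need it, you would have to supplement your argument with a short sub/supersolution comparison for the cell problem.
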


\begin{remark}
  \label{sec:main-result-6}
It is striking that, in  (\ref{eq:18}), the effective flux limiter $E(p_2)$ can be deduced explicitly from the limiters $E^{L,M}(p_2)$ and $E^{M,R}(p_2)$
 obtained in \S~\ref{sec:effect-probl-obta}.  
\end{remark}

\bigskip

Let us  consider the relaxed semi-limits
 \begin{equation}
 \label{def:v_tilde_overline_underline}
 \overline{{v}}(z)={\limsup_\eta}^{*} {v}_\eta(z)=\limsup_{z'\to z, \eta\to 0}{v}_\eta(z') \quad \mbox{ and } \quad \underline{{v}}(z)=\underset{\eta}{{\liminf}_{*}}{v}_\eta(z)=\liminf_{z'\to z, \eta\to 0}{v}_\eta(z').
 \end{equation}
Note that  $ \overline{{v}}$ and $ \underline{{v}}$ are well defined, since  $\left( v_\eta\right)_\eta$ is uniformly bounded by $M_\ell /\lambda$, see (\ref{eq:4}).
It is classical to check that the functions $\overline{v}(z)$ and $\underline{{v}}(z)$ are respectively a bounded subsolution and a bounded supersolution in $\Omega^i$ of
\begin{equation}
  \label{eq:5bisnew}
\l u(z)+H^i(Du(z))= 0.
\end{equation}
To find the effective transmission on $\Gamma$, we shall proceed as in \cite{MR3299352,MR3441209,MR3565416} and consider cell problems in larger and larger bounded domains.
\subsection{Proof of Theorem~\ref{sec:main-result-2}}
\subsubsection{State-constrained problem in truncated domains}\label{sec:state-constr-probl-2}
Let us fix $p_2\in \R$. For $\rho>1$, we consider the one dimensional {\sl truncated cell problem}:
\begin{equation}
\label{eq:20}
\left\{
    \begin{array}[c]{l}
 \ds   H^L\left(\frac{du}{dy} (y)+p_2e_2\right)\le \mu_\rho( p_2),\quad\quad\quad\hfill\hbox{ if } y\in (-\rho,-1),  \\
   \ds H^L\left(\frac{du}{dy} (y)+p_2e_2\right)\ge \mu_\rho( p_2),\quad\quad\quad\hfill\hbox{ if } y\in [-\rho,-1),  \\
   \ds H^M\left(\frac{du}{dy} (y)+p_2e_2\right)=\mu_\rho( p_2),\quad\quad\quad\hfill\hbox{ if } y\in (-1,1),  \\
    \ds H^R\left(\frac{du}{dy} (y)+p_2e_2\right)\le \mu_\rho( p_2),\quad\quad\quad\hfill\hbox{ if } y\in (1,\rho),  \\
    \ds H^R\left(\frac{du}{dy} (y)+p_2e_2\right)\ge \mu_\rho( p_2),\quad\quad\quad\hfill\hbox{ if } y\in (1,\rho],  \\
    \ds \max\left(E^{L,M}(p_2), H^{L,M}\left( \frac{d  u^L}{dy}(-1^-)+p_2e_2 ,  \frac{d u^M}{dy}(-1^+)+p_2e_2 \right) \right) =\mu_\rho( p_2),\\ 
    \ds \max\left(E^{M,R}(p_2), H^{M,R}\left( \frac{d u^M}{dy} (1^-)+p_2e_2 ,  \frac{du^R}{dy} (1^+)+p_2e_2 \right) \right) =\mu_\rho( p_2).
    \end{array}
\right.
\end{equation}
Exactly as in \cite{MR3565416}, we can prove the following lemma:
\begin{lemma}
\label{sec:state-constr-probl-3}
There is a unique  $\mu_\rho( p_2)\in \R$ such that (\ref{eq:20}) admits a bounded solution. 
For this choice of  $\mu_\rho( p_2)$, there exists a  solution  $y\mapsto \psi_\rho(p_2,y)$ which is Lipschitz continuous with  a Lipschitz constant
 $L$ depending on $p_2$ only (independent of $\rho$).
\end{lemma}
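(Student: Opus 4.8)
The proof runs exactly parallel to that of \cite[Lemma~4.6]{MR3565416}, so I only describe the main steps. The plan is to obtain $\mu_\rho(p_2)$ as the ergodic limit of a family of discounted problems. For $\delta>0$, let $u^\delta=u^\delta_\rho(p_2,\cdot)$ be the unique bounded continuous viscosity solution of the state-constrained junction problem obtained from \eqref{eq:20} by replacing every occurrence of $\mu_\rho(p_2)$ by $-\delta u^\delta(y)$: the subsolution inequalities are imposed on the open intervals $(-\rho,-1),(-1,1),(1,\rho)$, the supersolution inequalities on $[-\rho,-1),(-1,1),(1,\rho]$, and the two conditions at $y=-1$ and $y=1$ keep the flux-limited form $\max(E^{\cdot,\cdot},H^{\cdot,\cdot})$ already present in \eqref{eq:20}. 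Existence and uniqueness of $u^\delta$ follow from Perron's method together with the comparison principle for flux-limited Hamilton--Jacobi problems on networks with convex Hamiltonians (recall that $H^M$ is convex by Lemma~\ref{sec:effect-hamilt-hm}), see \cite{imbert:hal-01073954,oudet2014}, the state-constraint condition at $y=\pm\rho$ being treated as in \cite{MR3299352,MR3565416}.

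Next I would prove a priori bounds uniform in $\rho$ and $\delta$. Let $C_2=C_2(p_2)$ dominate the finite quantities $|H^i(p_2e_2)|$ ($i=L,M,R$), $|H^{L,M}(p_2e_2,p_2e_2)|$, $|H^{M,R}(p_2e_2,p_2e_2)|$, $|E^{L,M}(p_2)|$ and $|E^{M,R}(p_2)|$. The constant functions $\pm C_2/\delta$ are respectively a supersolution and a subsolution of the discounted problem (a subsolution has no requirement at $y=\pm\rho$), so comparison gives $\sup_{[-\rho,\rho]}|\delta u^\delta|\le C_2$. Inserting the subsolution inequalities $H^i\big((u^\delta)'(y)+p_2e_2\big)\le -\delta u^\delta(y)\le C_2$ into the coercivity estimate coming from \textbf{[H3]} (for $H^M$ this is \eqref{lem:E_coercive}), namely $\delta_0|p|-C\le H^i(p)$, yields $|(u^\delta)'(y)+p_2e_2|\le (C_2+C)/\delta_0$ a.e., so $u^\delta$ is Lipschitz with a constant $L=L(p_2)=(C_2+C)/\delta_0+|p_2|$ independent of $\rho$ and $\delta$ (the two junction points being negligible).

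I would then let $\delta\to0$ with $\rho$ fixed. Setting $\tilde u^\delta:=u^\delta-u^\delta(0)$, one has $\tilde u^\delta(0)=0$ and $\tilde u^\delta$ is $L$-Lipschitz, hence equibounded on $[-\rho,\rho]$, while $\delta u^\delta(0)\in[-C_2,C_2]$. By Arzel\`a--Ascoli, along a subsequence $\delta_n\to0$ one gets $\tilde u^{\delta_n}\to\psi_\rho(p_2,\cdot)$ uniformly on $[-\rho,\rho]$ and $\delta_n u^{\delta_n}(0)\to -\mu_\rho(p_2)$. Since $\delta_n u^{\delta_n}(y)=\delta_n\tilde u^{\delta_n}(y)+\delta_n u^{\delta_n}(0)\to -\mu_\rho(p_2)$ uniformly on $[-\rho,\rho]$, stability of viscosity sub- and supersolutions shows that $\psi_\rho(p_2,\cdot)$ solves \eqref{eq:20} with this value $\mu_\rho(p_2)$, and it is $L$-Lipschitz by construction; this settles existence. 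For uniqueness of $\mu_\rho(p_2)$, I would observe that if $(\mu,\psi)$ is any pair for which \eqref{eq:20} holds, then, using the edge equalities/inequalities and the junction conditions satisfied by $\psi$, the functions $y\mapsto\psi(y)-\mu/\delta\pm\sup_{[-\rho,\rho]}|\psi|$ are respectively a supersolution and a subsolution of the $\delta$-discounted problem (the extra discount term being absorbed by $\sup|\psi|$); comparison with $u^\delta$ then forces $|\delta u^\delta(0)+\mu|\le\delta\big(\sup_{[-\rho,\rho]}|\psi|+|\psi(0)|\big)\to0$, so $\lim_{\delta\to0}\delta u^\delta(0)=-\mu$, and since the left-hand side does not depend on the chosen pair, $\mu_\rho(p_2)=-\lim_{\delta\to0}\delta u^\delta(0)$ is unique (in particular the whole family, not just a subsequence, converges).

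The only genuinely delicate ingredient in this scheme is the well-posedness --- and especially the comparison principle --- for the discounted state-constrained junction problem: one has to check that the junction conditions of \eqref{eq:20}, which are already written in the flux-limited form $\max(E^{\cdot,\cdot},H^{\pm,1,\cdot})$, together with the state constraint at $y=\pm\rho$, fall within the framework of \cite{imbert:hal-01073954,oudet2014,MR3565416}; once this is granted, everything else is routine one-dimensional analysis, which is why the authors of \cite{MR3565416} (and here) only sketch it.
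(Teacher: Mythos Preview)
Your proposal is correct and follows precisely the approach the paper itself invokes: the paper gives no detailed proof but refers to \cite{MR3565416}, and your sketch is exactly the vanishing-discount argument of \cite[Lemma~4.6]{MR3565416} (discounted state-constrained junction problem, uniform $L^\infty$ and Lipschitz bounds via constant barriers and coercivity, Arzel\`a--Ascoli plus stability for existence, comparison against $\psi-\mu/\delta\pm\sup|\psi|$ for uniqueness of the ergodic constant). Nothing further is needed.
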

It is also possible to check that there  exists a  scalar constant $K$ such that for all real numbers $\rho_1$ and $\rho_2$ such that $\rho_1\leq \rho_2$,
\[\mu_{\rho_1}( p_2)\leq \mu_{\rho_2}( p_2)\leq K.\]
From this property, it is possible to pass to the limit  as $\rho\to +\infty$: the effective tangential Hamiltonian $E( p_2)$ is defined by 
\begin{equation}
\label{eq:21}
E( p_2)=\lim_{\rho\rightarrow \infty} \mu_{\rho}( p_2).
\end{equation}
\subsubsection{The global cell problem}\label{sec:global-cell-problem}
Fixing  $p_2\in\R$, the {\sl global cell-problem} reads
\begin{equation}
\label{eq:22}
\left\{
    \begin{array}[c]{l}
      \ds   H^L\left(\frac{du}{dy} (y) e_1+p_2e_2\right)=E( p_2),\quad\quad\quad\hfill\hbox{ if } y<-1,  \\
      \ds H^M\left(\frac{du}{dy} (y)e_1+p_2e_2\right)=E( p_2),\quad\quad\quad\hfill\hbox{ if } y\in (-1,1),  \\
      \ds H^R\left(\frac{du}{dy} (y)e_1+p_2e_2\right)=E( p_2),\quad\quad\quad\hfill\hbox{ if } y> 1,  \\
      \ds \max\left(E^{L,M}(p_2), H^{L,M}\left( \frac{d  u}{dy}(-1^-)e_1+p_2e_2 ,  \frac{d u}{dy}(-1^+)e_1+p_2e_2 \right) \right) =E( p_2),\\
      \ds \max\left(E^{M,R}(p_2), H^{M,R}\left( \frac{d u}{dy} (1^-) e_1+p_2e_2 ,  \frac{du^R}{dy} (1^+)e_1 +p_2e_2 \right) \right) =E( p_2). 
    \end{array}
\right.
\end{equation}
Exactly as in \cite{MR3565416}, we obtain the existence of a solution of the global cell problem by passing to the limit in (\ref{eq:20}) as $\rho\to +\infty$:
\begin{proposition}[Existence of a global corrector]
\label{sec:state-constr-probl-5}
For  $p_2\in \R$, there exists $\psi( p_2,\cdot)$ a Lipschitz continuous viscosity solution of (\ref{eq:22})
such that $\psi( p_2,0)=0$.  For $\eta>0$,  setting $W_\eta( p_2,y)=\eta\psi( p_2,\frac y {\eta})$, 
there exists a sequence    $\eta_n$  such that  $W_{\eta_n}  ( p_2,\cdot)$ converges locally uniformly to a 
Lipschitz  function $y\mapsto W( p_2, y)$,  with the same Lipschitz constant as $\psi$.  The function $W$ is  a viscosity solution of 
\begin{equation}
\label{eq:19}
H^i\left( \frac {du}{dy_1} (y_1) e_1 +p_2e_2\right)=E( p_2)\quad\hbox{ if } y_1e_1\in \Omega^i,
\end{equation}
and satisfies $W( p_2,0)=0$. Moreover, 
\begin{equation*}
  E( p_2)\ge \max\left\lbrace  E_0^L(p_2), E_0^R(p_2)\right\rbrace.
\end{equation*}
\end{proposition}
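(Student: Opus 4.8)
The plan is to mimic the construction already carried out for the truncated cell problem in \S~\ref{sec:ergod-const-state}--\ref{sec:passage-limit-as} (and its analogue in \cite[Theorem 4.8 and Theorem 4.6]{MR3565416}), so the argument splits into two independent passages to the limit: first $\rho\to\infty$ at $\eta$ fixed equal to $1$ (i.e. working directly with \eqref{eq:20}), then a rescaling $y\mapsto \eta\psi(p_2,y/\eta)$ followed by $\eta\to 0$. For the first passage, I would take the uniformly Lipschitz solutions $\psi_\rho(p_2,\cdot)$ of \eqref{eq:20} given by Lemma~\ref{sec:state-constr-probl-3}, normalize them by subtracting $\psi_\rho(p_2,0)$, and apply Ascoli--Arzel\`a on compact subsets of $\R$ together with a diagonal extraction to obtain a subsequence converging locally uniformly to a Lipschitz $\psi(p_2,\cdot)$ with $\psi(p_2,0)=0$. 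Stability of viscosity sub/supersolutions under local uniform convergence, applied separately in the open intervals $(-\infty,-1)$, $(-1,1)$, $(1,\infty)$ and at the two junction points $y=\pm1$, together with $\mu_\rho(p_2)\to E(p_2)$ from \eqref{eq:21}, shows that $\psi(p_2,\cdot)$ solves the global cell problem \eqref{eq:22}; the junction conditions pass to the limit exactly as in \cite{MR3565416} because the one-sided derivative traces are controlled by the uniform Lipschitz bound.

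For the second passage, set $W_\eta(p_2,y)=\eta\psi(p_2,y/\eta)$. Each $W_\eta(p_2,\cdot)$ is Lipschitz with the same constant $L=L(p_2)$ as $\psi$ (the rescaling preserves the Lipschitz constant) and satisfies $W_\eta(p_2,0)=0$, so Ascoli--Arzel\`a gives a subsequence $\eta_n\to0$ with $W_{\eta_n}(p_2,\cdot)\to W(p_2,\cdot)$ locally uniformly, $W(p_2,\cdot)$ Lipschitz with constant $L$ and $W(p_2,0)=0$. To identify the limiting equation, fix $y_1^0$ with $y_1^0\ne0$, say $y_1^0<0$, so that $y_1^0 e_1\in\Omega^L$; for $n$ large the point $y_1^0/\eta_n$ lies in $(-\infty,-1)$, hence $W_{\eta_n}(p_2,\cdot)$ solves $H^L(\frac{d}{dy_1}W_{\eta_n}(p_2,y_1)e_1+p_2e_2)=E(p_2)$ (using $\mu_\rho$ has already been replaced by $E(p_2)$ in the global problem, and that under the rescaling the equation in the ``bulk'' intervals is invariant), and local uniform convergence plus stability yields $H^L(\frac{d}{dy_1}W(p_2,y_1)e_1+p_2e_2)=E(p_2)$ in a neighbourhood of $y_1^0$; the case $y_1^0>0$ is identical with $H^R$. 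Since the point $y_1=0$ is a single point, it imposes no viscosity constraint, so \eqref{eq:19} holds. The final inequality $E(p_2)\ge\max\{E_0^L(p_2),E_0^R(p_2)\}$ then follows exactly as in Proposition~\ref{cor:E_bigger_than_E_0}: $W(p_2,\cdot)$ is a Lipschitz solution of an eikonal-type equation $H^i(\cdot)=E(p_2)$ on a half-line, so for a.e. $y_1$ one has $E(p_2)=H^i(\partial_{y_1}W(p_2,y_1)e_1+p_2e_2)\ge \min_{q\in\R}H^i(p_2e_2+qe_1)=E_0^i(p_2)$, and one takes the max over $i=L,R$.

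The routine ingredients here are the uniform Lipschitz estimate (which comes from the coercivity assumption [H3] and is already furnished by Lemma~\ref{sec:state-constr-probl-3}), the monotonicity $\mu_{\rho_1}\le\mu_{\rho_2}\le K$ and hence the existence of the limit \eqref{eq:21}, and the standard stability of viscosity solutions under local uniform convergence. The one point that requires genuine care — and which I expect to be the main obstacle — is the passage to the limit of the junction conditions at $y=\pm1$ in the first step: one must check that the ``$\max$ with $E^{L,M}$ / $E^{M,R}$'' transmission conditions of \eqref{eq:20} are stable under local uniform convergence of uniformly Lipschitz functions, i.e. that the one-sided slope traces $\frac{du^L}{dy}(-1^-)$, $\frac{du^M}{dy}(-1^+)$, etc., behave well in the limit. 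This is handled by invoking the reduced-test-function characterization (Theorem~\ref{th:restriction_set_of_test_functions} and its junction analogue) rather than arguing directly with derivative traces, exactly the device used in \cite{MR3565416}; once that reduction is in place, the stability is as elementary as in the bulk intervals. I would therefore write the proof as ``the construction is identical to that of Theorem~4.8 in \cite{MR3565416}, replacing the two-dimensional interface geometry by the one-dimensional three-interval configuration of \eqref{eq:20}, and the bulk Hamiltonians $H^L,H^R$ by $H^L,H^M,H^R$ with the junction Hamiltonians $\max(E^{L,M},H^{L,M})$ and $\max(E^{M,R},H^{M,R})$,'' and only spell out the two Ascoli--Arzel\`a extractions and the final coercivity estimate in detail.
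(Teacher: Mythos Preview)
Your proposal is correct and follows essentially the same route as the paper, which does not give a detailed proof but simply states that the construction is obtained ``exactly as in \cite{MR3565416}'' by passing to the limit $\rho\to\infty$ in \eqref{eq:20}; your two-step outline (Ascoli--Arzel\`a on the $\psi_\rho$ to get $\psi$, then rescaling and a second Ascoli--Arzel\`a extraction to get $W$, with the final inequality deduced as in Proposition~\ref{cor:E_bigger_than_E_0}) is precisely the argument behind that reference and behind the analogous construction in \S\ref{sec:passage-limit-as}. Your identification of the junction-stability step at $y=\pm1$ as the only nontrivial point, and the use of the reduced test-function characterization to handle it, is also in line with how the paper treats the corresponding issue elsewhere.
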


\subsubsection{Proof of (\ref{eq:18})}
In view of Proposition \ref{sec:state-constr-probl-5}, the following numbers are well defined for all   $p_2 \in \R$:
\begin{eqnarray}
\label{eq:26}
\overline{\pi}^L(p_2)\!= \!\min\left\lbrace q\in \R : H^L( p_2e_2+qe_1)=H^{-,1,L}( p_2e_2+qe_1)=E(p_2) \right\rbrace,\\
\label{eq:27}
\widehat{\pi}^L(p_2)\!= \!\max\left\lbrace q\in \R : H^L(p_2e_2+qe_1)=H^{-,1,L}( p_2e_2+qe_1)=E(p_2) \right\rbrace,\\
\label{eq:37}
\overline{\pi}^R(p_2)\!= \!\min\left\lbrace q\in \R : H^R( p_2e_2+qe_1)=H^{+,1,R}( p_2e_2+qe_1)=E(p_2) \right\rbrace,\\
\label{eq:38}
\widehat{\pi}^R(p_2)\!= \!\max\left\lbrace q\in \R : H^R( p_2e_2+qe_1)=H^{+,1,R}( p_2e_2+qe_1)=E(p_2) \right\rbrace.
\end{eqnarray}
From the  convexity  of the Hamiltonians $H^i$, we deduce that for $i=L,R$, if $E^i_0(p_2)<E (p_2)$,  then 
$\overline{\pi}^i(p_2)=\widehat{\pi}^i(p_2)$. In this case, we will use the notation
\begin{equation}
\label{eq:41}
\pi^i(p_2)=\overline{\pi}^i(p_2)=\widehat{\pi}^i(p_2).
\end{equation}

\begin{lemma}
  \label{sec:global-cell-problem-2}
For any   $p_2 \in \R$:
\begin{itemize}
\item   if $E(p_2)>E_0^R(p_2)$, 
then  $\psi( p_2, \cdot)$ is affine in the interval $(1,+\infty)$ and $\partial_y \psi ( p_2, y)= \pi^R(p_2)$
\item  if $E( p_2)>E_0^L(p_2)$,
 then $\psi( p_2, \cdot)$ is affine  in the interval $(-\infty,-1)$ and $\partial_y \psi ( p_2, y)= \pi^L(p_2)$.
\end{itemize}
\end{lemma}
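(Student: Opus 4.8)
The plan is to go back to the truncated cell problems (\ref{eq:20}) from which the global corrector $\psi(p_2,\cdot)$ was built in Proposition~\ref{sec:state-constr-probl-5}, and to exploit the state-constraint condition they carry at the truncation endpoints $y=\pm\rho$. I treat the case $E(p_2)>E_0^R(p_2)$ on the interval $(1,+\infty)$; the case $E(p_2)>E_0^L(p_2)$ on $(-\infty,-1)$ is symmetric.

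First I would analyze the scalar function $h(q):=H^R(p_2e_2+qe_1)$. By assumption [H3] it is convex and coercive, with minimum value $E_0^R(p_2)$ attained exactly on $[p^{-,R}_{1,0}(p_2),p^{+,R}_{1,0}(p_2)]$. Since $\mu_\rho(p_2)$ is nondecreasing and converges to $E(p_2)>E_0^R(p_2)$, for $\rho$ large we have $E_0^R(p_2)<\mu_\rho(p_2)\le E(p_2)$, so the level set $\{h=\mu_\rho(p_2)\}$ consists of exactly two points $q^-_\rho<q^+_\rho$ with $q^+_\rho>p^{+,R}_{1,0}(p_2)$; since a convex function is strictly increasing to the right of its minimizing interval, the larger root depends continuously on the level, so $q^+_\rho\to\pi^R(p_2)$ as $\rho\to\infty$. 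Here I use that $\{q:H^{+,1,R}(p_2e_2+qe_1)=H^R(p_2e_2+qe_1)\}=[p^{-,R}_{1,0}(p_2),+\infty)$, so that $\pi^R(p_2)$, defined in (\ref{eq:37})--(\ref{eq:41}), is precisely the largest root of $h=E(p_2)$.

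Next I would show that $\psi_\rho(p_2,\cdot)$ (the Lipschitz solution from Lemma~\ref{sec:state-constr-probl-3}) is affine on $(1,\rho)$ with slope $q^+_\rho$. On $(1,\rho)$ it is a Lipschitz viscosity solution of $h(\partial_y\psi_\rho)=\mu_\rho(p_2)$, hence $\partial_y\psi_\rho\in\{q^-_\rho,q^+_\rho\}$ a.e.; moreover, convexity of $h$ together with the supersolution inequality forbids convex corners (left derivative strictly below right derivative — the same obstruction that prevents $y\mapsto|y|$ from being a viscosity supersolution of $|u'|=1$), so $\partial_y\psi_\rho$ can only jump from $q^+_\rho$ down to $q^-_\rho$ as $y$ increases: there is $c_\rho\in[1,\rho]$ with $\partial_y\psi_\rho=q^+_\rho$ on $(1,c_\rho)$ and $\partial_y\psi_\rho=q^-_\rho$ on $(c_\rho,\rho)$, so in particular $\psi_\rho$ is affine near $\rho$. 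Now the last supersolution inequality in (\ref{eq:20}) is imposed up to and including $y=\rho$, i.e.\ it is a state-constraint condition at $\rho$; testing it with the admissible one-sided test functions yields $h(s)\ge\mu_\rho(p_2)$ for every $s$ above the left derivative of $\psi_\rho(p_2,\cdot)$ at $\rho$, and since such a half-line must avoid the open interval $(q^-_\rho,q^+_\rho)$ on which $h<\mu_\rho(p_2)$, that left derivative must be $\ge q^+_\rho$, hence equal to $q^+_\rho$. Therefore $c_\rho=\rho$ and $\psi_\rho(p_2,\cdot)$ is affine on $(1,\rho)$ with slope $q^+_\rho$.

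Finally I would let $\rho\to\infty$. By Proposition~\ref{sec:state-constr-probl-5}, $\psi_\rho(p_2,\cdot)\to\psi(p_2,\cdot)$ locally uniformly; for any fixed $R_0>1$ and any $\rho>R_0$ the function $\psi_\rho(p_2,\cdot)$ is affine on $(1,R_0)$ with slope $q^+_\rho\to\pi^R(p_2)$, so $\psi(p_2,\cdot)$ is affine on $(1,R_0)$ with slope $\pi^R(p_2)$; since $R_0$ is arbitrary this gives the claim on $(1,+\infty)$. On $(-\infty,-1)$ one argues identically with $H^L$ in place of $H^R$: the state constraint now sits at $y=-\rho$ and forces the right derivative of $\psi_\rho$ there to equal the smallest root $q^-_\rho$ of $H^L(p_2e_2+\cdot\,e_1)=\mu_\rho(p_2)$; the no-convex-corner property then makes $\psi_\rho$ affine on $(-\rho,-1)$ with slope $q^-_\rho$, and $q^-_\rho\to\pi^L(p_2)$ because $\{H^{-,1,L}=H^L\}=(-\infty,p^{-,L}_{1,0}(p_2)]$, so $\pi^L(p_2)$ in (\ref{eq:26})--(\ref{eq:27}), (\ref{eq:41}) is exactly the smallest root of $H^L(p_2e_2+\cdot\,e_1)=E(p_2)$. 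The step I expect to be the main obstacle is the third one — extracting, from the state-constraint inequality at the truncation endpoint together with the convexity of the Hamiltonian, the fact that the slope is pinned down on the entire half-line. This is a one-dimensional state-constrained Hamilton-Jacobi argument and is exactly the place where the $\ge$ inequalities on the closed intervals in (\ref{eq:20}) are really used.
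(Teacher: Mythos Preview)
Your argument is correct and is in fact more direct than the route taken in the paper. The paper proves Lemma~\ref{sec:global-cell-problem-2} by first reproducing, in the present one-dimensional setting, the argument of Proposition~\ref{cor:slopes_omega} (i.e.\ of Lemma~\ref{SLOPESLemma1}): one builds on the truncated domain an affine subsolution with slope $q_{\rho,\delta}\to\pi^R(p_2)$ and compares it to $\psi_\rho$, using the state constraint at $y=\rho$ through Soner-type arguments; this yields the growth inequality $\psi(p_2,y+h_1)-\psi(p_2,y)\ge\pi^R(p_2)h_1-M^*$ for $y\ge\rho^*$, and a standard one-dimensional convex-Hamiltonian argument then forces $\partial_y\psi=\pi^R(p_2)$ a.e.\ on $(1,+\infty)$.

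You bypass the comparison-with-subsolution step entirely by exploiting the one-dimensional structure more aggressively: the a.e.\ two-valued derivative, the ``no convex corner'' obstruction from the supersolution inequality, and then the state constraint at the endpoint tested with affine functions. This pins the slope of $\psi_\rho$ to $q^+_\rho$ on the whole of $(1,\rho)$, after which the limit is immediate. Your approach is more elementary and gives a slightly sharper intermediate statement (affineness of each $\psi_\rho$ rather than just a growth bound on $\psi$); the paper's approach, on the other hand, is the one that generalizes to the genuinely two-dimensional situations treated elsewhere in the paper (Proposition~\ref{cor:slopes_omega} and \S\ref{sec:correctors}), where the piecewise-affine structure is not available. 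One cosmetic point: the convergence $\psi_\rho\to\psi$ in Proposition~\ref{sec:state-constr-probl-5} is along a subsequence, but since $\psi$ is by definition such a limit this does not affect your argument.
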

\begin{proof}
  If $E(p_2)>E_0^R(p_2)$, we prove, exactly as  Proposition \ref{cor:slopes_omega}  that  there exist  $\rho^*=\rho^*(p_2)  >0$ and $M^*= M^*(p_2)   \in \R$
such that, for all $y\in [\rho^*,+\infty)$, $h_1\ge 0$,
\begin{equation}
\label{eq:42}
\psi(p_2, y+h_1e_1)-\psi(p_2, y)\geq  \pi^R(p_2) h_1-M^*.
\end{equation}
From (\ref{eq:42}), classical arguments on viscosity solutions of one-dimensional equations with convex Hamiltonians yield the desired result for $y>1$.
The same kind of arguments are used for $y<-1$. 
\end{proof}
\begin{proposition}\label{sec:proof-refeq:18}
  The constant $E(p_2)$ defined in (\ref{eq:21}) satisfies (\ref{eq:18}).
\end{proposition}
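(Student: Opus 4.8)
The plan is to prove the two inequalities $E(p_2)\le \max(E^{L,M}(p_2),E^{M,R}(p_2))$ and $E(p_2)\ge \max(E^{L,M}(p_2),E^{M,R}(p_2))$, and throughout I write $c:=\max(E^{L,M}(p_2),E^{M,R}(p_2))$. A preliminary observation is needed: by Proposition \ref{cor:E_bigger_than_E_0} and its $L$--$M$ analogue, together with (\ref{def:ELM_0})--(\ref{def:EMR_0}), one has $c\ge \max(E_0^L(p_2),E_0^M(p_2),E_0^R(p_2))$. Hence, using the convexity and coercivity of $q\mapsto H^i(qe_1+p_2e_2)$ (Lemma \ref{sec:effect-hamilt-hm} for $i=M$, and the definition of $H^i$ for $i=L,R$), the sub-level set $I_i(c):=\{q\in\R:\ H^i(qe_1+p_2e_2)\le c\}$ is a non-empty compact interval $[\underline q_i(c),\overline q_i(c)]$ for each $i$. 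I will also use the elementary description recalled in \S\ref{sec:effect-hamilt-omeg}: for each $i$, $H^{+,1,i}(qe_1+p_2e_2)\le c\iff q\le\overline q_i(c)$ and $H^{-,1,i}(qe_1+p_2e_2)\le c\iff q\ge\underline q_i(c)$.

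For the upper bound I would construct an explicit global subsolution. Take a continuous piecewise-affine function $\Psi:\R\to\R$ that is affine with slope $\overline q_L(c)$ on $(-\infty,-1)$, convex on $(-1,1)$ with slope $\underline q_M(c)$ near $-1$ and slope $\overline q_M(c)$ near $1$, and affine with slope $\underline q_R(c)$ on $(1,+\infty)$. Each affine piece is a subsolution of the corresponding interior equation since its slope lies in $I_i(c)$, and the interior convex kink inside $(-1,1)$ causes no trouble because $H^M(qe_1+p_2e_2)\le c$ on all of $I_M(c)$. At the junction $y=1$, the admissible test slopes from above lie in $[\underline q_R(c),\overline q_M(c)]$ (if this set is empty there is nothing to check), and for any such $q$ one has $H^{M,R}(qe_1+p_2e_2,qe_1+p_2e_2)=\max(H^{+,1,M}(qe_1+p_2e_2),H^{-,1,R}(qe_1+p_2e_2))\le c$ while $E^{M,R}(p_2)\le c$, so $\Psi$ is a subsolution of the transmission condition at $y=1$; the check at $y=-1$ is identical. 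Thus $\Psi$ is a global subsolution of (\ref{eq:22}) with $E(p_2)$ replaced by $c$. Now fix $\rho>1$ and compare $\Psi$ with the solution $\psi_\rho(p_2,\cdot)$ of (\ref{eq:20}), which is a supersolution of the same system with constant $\mu_\rho(p_2)$ on the closed interval $[-\rho,\rho]$ (the state constraint at $\pm\rho$ yielding the viscosity supersolution property up to the boundary). If $\mu_\rho(p_2)>c$, then $\sup_{[-\rho,\rho]}(\Psi-\psi_\rho)$ would be attained at some $y^\star$; it cannot lie in the interior of a region (standard comparison for $H^i(u'+p_2e_2)=\mathrm{const}$ since $c<\mu_\rho$), nor at $y=\pm1$ (junction comparison, again using $c<\mu_\rho$, as in the comparison principle behind Lemma \ref{sec:state-constr-probl-3}), nor at $y=\pm\rho$ (there $\Psi$ is affine with slope $\overline q_L(c)$, resp.\ $\underline q_R(c)$, so $H^L$, resp.\ $H^R$, evaluated on its gradient is $\le c<\mu_\rho$, contradicting the state-constrained supersolution inequality). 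Hence $\mu_\rho(p_2)\le c$ for every $\rho$, and (\ref{eq:21}) gives $E(p_2)\le c$.

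For the lower bound it suffices to prove $E(p_2)\ge E^{M,R}(p_2)$, the inequality $E(p_2)\ge E^{L,M}(p_2)$ following by the symmetric argument. Recall that $E^{M,R}(p_2)=\sup_\sigma\lambda_\sigma(p_2)$, where $\lambda_\sigma(p_2)$ is the ergodic constant of the two-scale truncated cell problem (\ref{trunc-cellp}) in $Y^\sigma$, with uniformly Lipschitz corrector $\chi_\sigma(p_2,\cdot)$. The idea, exactly as in \cite{MR3565416}, is to transfer this corrector into a competitor for (\ref{eq:20}): on the "bulk" part of the two-scale domain, where the oscillatory interface $\widetilde\Gamma_{\eta}$ reduces to straight horizontal segments, glue $\chi_\sigma(p_2,\cdot)$ with the one-dimensional vertical correctors $\zeta$ of Proposition \ref{sec:effect-hamilt-omeg-1} and carry out the vertical homogenisation to obtain a Lipschitz function on an interval $(-1,1+\sigma)$ which is a viscosity subsolution of the "$M$--$R$ portion" of (\ref{eq:20}) with constant $\lambda_\sigma(p_2)-o_\sigma(1)$ and whose slope on the $R$-side is admissible; feeding this, together with the trivial subsolution on the $L$-side, into the comparison principle for (\ref{eq:20}) with $\rho=\sigma+2$ yields $\mu_\rho(p_2)\ge\lambda_\sigma(p_2)-o_\sigma(1)$. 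Letting first $\rho\to+\infty$ and then $\sigma\to+\infty$ gives $E(p_2)\ge E^{M,R}(p_2)$, and combining the two inequalities proves (\ref{eq:18}).

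The step I expect to be the genuine obstacle is this last transfer in the lower bound: making precise the vertical homogenisation of the two-scale corrector $\chi_\sigma$ near the transition zone where the oscillatory interface $\widetilde\Gamma_{\eta}$ actually sits, controlling the error $o_\sigma(1)$ uniformly in $\sigma$, and checking that the glued function is a genuine viscosity subsolution across the interior junctions. By contrast, the upper bound is soft once the sub-level intervals $I_i(c)$ have been identified, and the preliminary reductions (bounding both $E(p_2)$ and $c$ from below by the $E_0^i(p_2)$) are immediate from the results already available in \S\ref{sec:effect-probl-obta} and \S\ref{sec:global-cell-problem}.
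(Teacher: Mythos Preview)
Your upper bound $E(p_2)\le c$ via an explicit piecewise-affine subsolution is correct and is a genuinely different route from the paper's. The paper instead argues the upper bound by analysing the structure of the global corrector $\psi(p_2,\cdot)$ itself: using Lemma~\ref{sec:global-cell-problem-2}, one knows that $\psi$ is affine with the ``outgoing'' slope $\pi^R(p_2)$ (resp.\ $\pi^L(p_2)$) on $(1,\infty)$ (resp.\ $(-\infty,-1)$) whenever $E(p_2)>E_0^R(p_2)$ (resp.\ $>E_0^L(p_2)$), and then a short case analysis on the (at most two) possible slopes of $\psi$ inside $(-1,1)$ forces one of the two junction conditions in~(\ref{eq:22}) to reduce to $E^{M,R}(p_2)=E(p_2)$ or $E^{L,M}(p_2)=E(p_2)$. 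Your construction trades this structural analysis for a comparison argument; both work, and yours has the advantage of not invoking Lemma~\ref{sec:global-cell-problem-2}.

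However, you have misidentified which inequality is the hard one. The lower bound $E(p_2)\ge c$ is \emph{immediate} from the global cell problem~(\ref{eq:22}) and requires none of the two-scale gluing you sketch. Indeed, $\psi(p_2,\cdot)$ is a viscosity \emph{sub}solution at $y=1$; since it is Lipschitz, there exist admissible test functions $\phi$ touching it from above there, and the subsolution inequality reads
\[
\max\Bigl(E^{M,R}(p_2),\,H^{M,R}\bigl(\phi'(1^-)e_1+p_2e_2,\phi'(1^+)e_1+p_2e_2\bigr)\Bigr)\le E(p_2),
\]
so in particular $E^{M,R}(p_2)\le E(p_2)$. The same argument at $y=-1$ gives $E^{L,M}(p_2)\le E(p_2)$. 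This is exactly the paper's one-line observation: ``From the fourth and fifth equations in~(\ref{eq:22}), we see that $E(p_2)\ge\max(E^{L,M}(p_2),E^{M,R}(p_2))$.'' Your proposed transfer of the two-dimensional corrector $\chi_\sigma$ back into the one-dimensional truncated problem is therefore unnecessary, and as you yourself note, the step you flagged as ``the genuine obstacle'' is not carried out with any precision; fortunately it is not needed.
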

\begin{proof}
From the fourth and fifth equations in (\ref{eq:22}), we see that $E(p_2)\ge \max( E^{L,M}(p_2), E^{M,R}(p_2))$. Moreover, we know that
 $ E^{M,R}(p_2)\ge  E^{M,R}_0(p_2)= \max (E^{M}_0(p_2), E^{R}_0(p_2))$ from Proposition~\ref{cor:E_bigger_than_E_0}. Similarly $ E^{L,M}(p_2)\ge  E^{L,M}_0(p_2)= \max (E^{L}_0(p_2), E^{M}_0(p_2))$.
\\
We make out two main cases:
\begin{enumerate}
\item If  $E(p_2)= E_0^M(p_2)$, then using the observations above,
we get that  $ E(p_2)=E^{L,M}(p_2)= E^{M,R}(p_2) )$, which implies (\ref{eq:18}).
\item If  $E(p_2)> E_0^M(p_2)$, then we can define two real numbers  $\pi^{M, -}< \pi^{M,+}$ such that 
  \begin{displaymath}
    \begin{split}
      H^M( \pi^{M, -} e_1+ p_2e_2 )=  H^{-,1,M}(\pi^{M, -} e_1+ p_2e_2 )= E(p_2),\\
      H^M( \pi^{M, +} e_1+ p_2e_2 )=  H^{+,1,M}( \pi^{M, +} e_1+ p_2e_2 )= E(p_2), 
    \end{split}
  \end{displaymath}
and one and only one of the following three assertions is true:
\begin{enumerate}
\item the function $\psi(p_2,\cdot)$ defined in Proposition~\ref{sec:state-constr-probl-5} is affine in $(-1,1)$ with slope $\pi^{M, -}$:  in this case, $   H^{+,1,M}( \partial_y \psi ( p_2, 1^-)  e_1+ p_2e_2 )< E(p_2)$:
using the fifth equation in (\ref{eq:22}), we deduce that \[\max \left(E^{M,R}(p_2),  H^{-,1,R}\left( \frac{d \psi }{dy} (1^+) e_1+p_2e_2  \right)\right)=E( p_2);\]
there are two subcases:
\begin{enumerate}
\item if $E(p_2)=E_0^R(p_2)$, then using the fact that $E^{M,R}(p_2)\ge E_0^R(p_2)$, we get that $E^{M,R}(p_2)= E( p_2)$
\item if $E(p_2)>E_0^R(p_2)$, then as a consequence  of Lemma~\ref{sec:global-cell-problem-2}, we see that  $H^{-,1,R} (\partial_y \psi ( p_2, 1^+) e_1 +p_2e_2)< E(p_2)$,
which again implies that $E^{M,R}(p_2)= E( p_2)$.
\end{enumerate}
Therefore  $E^{M,R}(p_2)= E( p_2)$, and since  $E^{L,M}(p_2)\le E( p_2)$ from the fourth equation in (\ref{eq:22}), we obtain (\ref{eq:18}).
\item $\psi(p_2,\cdot)$ is affine in $(-1,1)$ with slope $\pi^{M, +}$. The same arguments as in the previous case yield  that $E^{L,M}(p_2)= E(p_2)$ then  (\ref{eq:18}).
\item  $\psi(p_2,\cdot)$ is piecewise  affine in $(-1,1)$, with the slope  $\pi^{M, +}$ in  $(-1, c)$ and the slope $\pi^{M, -}$ in  $( c,1)$, for some $c$ with $|c|<1$.
Hence, $H^{-,1,M} ( \partial_y \psi ( p_2, -1^+) e_1 +p_2e_2)< E(p_2)$ and  $   H^{+,1,M}( \partial_y \psi ( p_2, 1^-)  e_1+ p_2e_2 )< E(p_2)$: therefore,
\begin{eqnarray}
  \label{eq:35}
    \ds \max\left(E^{L,M}(p_2),  H^{+,1,L}(  \partial_y \psi ( p_2, -1^-)  e_1+ p_2e_2 )\right)=E(p_2),\\
\label{eq:36}
    \ds \max\left(E^{M,R}(p_2), H^{-,1,R} ( \partial_y \psi (p_2, 1^+)  e_1 +p_2e_2)  \right) =E(p_2).
\end{eqnarray}
\begin{enumerate}
\item If  $E( p_2)= E_0^R(p_2)$, then the  very first  observation in the proof imply that $E^{M,R}(p_2)=E(p_2)$.
\item If  $E( p_2)> E_0^R(p_2)$, then from Lemma~\ref{sec:global-cell-problem-2}, $ H^{-,1,R} (\partial_y \psi ( p_2, 1^+)  e_1+p_2e_2)< E(p_2)$, and (\ref{eq:36}) yields 
that $E^{M,R}(p_2)=E(p_2)$.
\end{enumerate}
Similarly, using (\ref{eq:35}), we find that $E^{L,M}(p_2)=E(p_2)$, so \\ $ E^{L,M}(p_2)=E^{M,R}(p_2)=E(p_2)$, which yields  (\ref{eq:18}).
\end{enumerate}
\end{enumerate}
\end{proof}
\begin{remark}\label{sec:proof-refeq:18-1}
  We have actually proved that  $E(p_2)$ defined by (\ref{eq:18}) is the 
unique constant such that the global cell problem (\ref{eq:22}) has  a Lipschitz continuous solution.
\end{remark}

\subsubsection{End of the proof of Theorem~\ref{sec:main-result-2}}
\label{sec:end-proof-theorem}
The end of the proof of Theorem ~\ref{sec:main-result-2} is completely similar to the proof of the main result in \cite{MR3565416}. The general method was first proposed in \cite{MR3441209} and 
uses Evans' method of perturbed test-functions with the particular test-functions proposed in \cite{imbert:hal-01073954}, to which the  correctors found in
 Proposition \ref{sec:state-constr-probl-5} are associated. For  brevity, we do not repeat the proof here.

\section{Simultaneous passage to the limit as $\eta=\epsilon\to 0$}
\label{sec:simult-pass-limit}

\subsection{Main result}
\label{sec:main-result-3}
We now turn our attention to the case when $\eta=\epsilon$. We are interested in the asymptotic behavior of the sequence 
 $v_{\epsilon, \epsilon}$ as $\epsilon\to 0$.  The main result tells us that the limit is the same function $v$ as the one defined
in Theorem \ref{sec:main-result-2}, i.e. obtained by two successive passages to the limit  in $v_{\eta, \epsilon}$, first by letting  $\epsilon\to 0$  then $\eta\to 0$.
\begin{theorem}\label{sec:main-result-4}
   As $\epsilon\to 0$, $v_{\epsilon,\epsilon}$ converges locally uniformly  to $v$, the unique bounded viscosity solution of 
(\ref{eq:14})-(\ref{eq:17}), with  $H^{L,R}$ given by (\ref{eq:25}) and the effective flux-limiter 
$E(p_2)$ given by (\ref{eq:18}).
\end{theorem}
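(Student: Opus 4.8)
The plan is to follow the perturbed test-function scheme already used in \S\ref{sec:proof-theor-refth:c-1} and \S\ref{sec:end-proof-theorem}; the only genuinely new work is to identify the limit as $\epsilon\to0$ of the $\epsilon$-dependent ergodic constants $E_\epsilon(p_2)$ attached to the cell problem on the dilated geometry $\Gamma_{1,\epsilon}$. First I would introduce the relaxed semi-limits $\overline v={\limsup_\epsilon}^{*}v_{\epsilon,\epsilon}$ and $\underline v=\underset{\epsilon}{{\liminf}_{*}}v_{\epsilon,\epsilon}$, which are well defined and Lipschitz near $\Gamma$ because the $v_{\epsilon,\epsilon}$ are uniformly bounded and uniformly Lipschitz (see Remark~\ref{sec:reduced-set-test-1}). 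As in \S\ref{sec:proof-theor-refth:c-1}, $\overline v$ (resp. $\underline v$) is a bounded subsolution (resp. supersolution) of $\lambda u+H^i(Du)=0$ in $\Omega^i$, $i=L,R$. By the comparison principle for \eqref{eq:16} recalled in \S\ref{sec:second-passage-limit}, it then suffices to prove that $\overline v$ is a subsolution and $\underline v$ a supersolution of the transmission condition \eqref{eq:17} on $\Gamma$, after which $\overline v\le v\le\underline v\le\overline v$ and the locally uniform convergence follows.

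Next I would set up the correctors. For fixed $p_2\in\R$, dilating the geometry by $1/\epsilon$ turns $\Gamma_{\epsilon,\epsilon}$ into $\Gamma_{1,\epsilon}$, an interface of amplitude $\sim1$ and period $\sim\epsilon$; running the truncation-in-$y_1$ machinery of \S\ref{sec:ergod-const-state}--\ref{sec:passage-limit-as} on this fixed-amplitude, fast-period geometry produces a unique ergodic constant $E_\epsilon(p_2)$ together with a corrector $\xi_\epsilon(p_2,\cdot)$, $1$-periodic in $y_2/\epsilon$, Lipschitz with a constant independent of $\epsilon$, normalized by $\xi_\epsilon(p_2,0)=0$. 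The crux is then to establish $\lim_{\epsilon\to0}E_\epsilon(p_2)=E(p_2)$, with $E(p_2)$ given by \eqref{eq:18}. Because the $\xi_\epsilon$-cell problem is itself a transmission problem across the two-scale interface $\Gamma_{1,\epsilon}$, I would replay the analysis of \S\ref{sec:effect-probl-obta} with the amplitude parameter frozen equal to $1$ and the period $\epsilon\to0$: in the strip $\{|y_1|<1\}$ the oscillatory medium homogenizes to the effective Hamiltonian $H^M$ (via the one-dimensional vertical cell problem of Proposition~\ref{sec:effect-hamilt-omeg-1}), and on the lines $\{y_1=\pm1\}$ the effective transmission conditions carry the flux limiters $E^{L,M}(p_2)$ and $E^{M,R}(p_2)$ of \S\ref{sec:passage-limit-as}.

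Concretely, the functions $y\mapsto\epsilon\,\xi_\epsilon(p_2,y/\epsilon)$ being uniformly Lipschitz, Ascoli's theorem gives a subsequence converging locally uniformly to a Lipschitz function $W(p_2,\cdot)$, independent of $y_2$, with $W(p_2,0)=0$; a perturbed test-function argument in the spirit of Lemma~\ref{lem:rescaling_omega} then shows that $W(p_2,\cdot)$ is a viscosity solution of the three-zone problem \eqref{eq:22} with $\lim_\epsilon E_\epsilon(p_2)$ in place of $E(p_2)$ and junction conditions $E^{L,M}(p_2)$, $E^{M,R}(p_2)$ at $\pm1$. Since, by Remark~\ref{sec:proof-refeq:18-1}, $E(p_2)=\max(E^{L,M}(p_2),E^{M,R}(p_2))$ is the only constant for which \eqref{eq:22} possesses a Lipschitz solution, this forces $\lim_\epsilon E_\epsilon(p_2)=E(p_2)$, independently of the subsequence. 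I would also double-check the bound $\limsup_\epsilon E_\epsilon(p_2)\le E(p_2)$ by the dual route, exhibiting the explicit supersolution obtained by correcting the global corrector $\psi(p_2,\cdot)$ of Proposition~\ref{sec:state-constr-probl-5} with $\epsilon\zeta$ inside $\{|y_1|<1\}$ and with the truncated correctors outside, then invoking comparison for the truncated cell problems; and I would record, by the arguments behind Propositions~\ref{cor:slopes_omega} and \ref{cor:control_slopes_W}, that $W(p_2,\cdot)$ is squeezed between the affine functions of slopes $\overline\pi^{\,i}(p_2)$ and $\widehat\pi^{\,i}(p_2)$ on each side of $\Gamma$ (notation of \eqref{eq:26}--\eqref{eq:38}).

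With the pairs $(\xi_\epsilon(p_2,\cdot),E_\epsilon(p_2))$ and the limit $W$ in hand, I would close the argument exactly as in \S\ref{sec:proof-theor-refth:c-1} and \S\ref{sec:end-proof-theorem} (see also \cite{MR3565416,MR3441209}). For the subsolution property of $\overline v$ at $\bar z\in\Gamma$ one takes a reduced test-function $\varphi\in\cR^{\pi}(\bar z)$ built from the slopes $(\widehat\pi^{\,L},\overline\pi^{\,R})$, with $\overline v-\varphi$ having a strict local maximum at $\bar z$ and $\overline v(\bar z)=\varphi(\bar z)$; assuming for contradiction $\lambda\varphi(\bar z)+\max\bigl(E(\partial_{z_2}\varphi(\bar z)),H^{L,R}(D\varphi^L(\bar z),D\varphi^R(\bar z))\bigr)=\theta>0$, which reduces to $\lambda\psi(\bar z)+E(\partial_{z_2}\psi(\bar z))=\theta>0$, one checks that $\varphi^\epsilon(z)=\psi(z_2e_2)+\epsilon\,\xi_\epsilon\bigl(\partial_{z_2}\psi(\bar z),z/\epsilon\bigr)$ is a supersolution of the $(\epsilon,\epsilon)$-problem with right-hand side $\theta/2$ on a small ball $B(\bar z,r)$, compares it with $v_{\epsilon,\epsilon}$ on $\partial B(\bar z,r)$ using the slope bounds on $W$, and derives by comparison $v_{\epsilon,\epsilon}+K_r\le\varphi^\epsilon$ on $B(\bar z,r)$; letting $\epsilon\to0$ at $z=\bar z$ yields the contradiction $\overline v(\bar z)+K_r\le\overline v(\bar z)$. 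The supersolution property of $\underline v$ is entirely symmetric, the test-function being built from $(\overline\pi^{\,L},\widehat\pi^{\,R})$. The main obstacle is precisely the convergence $E_\epsilon(p_2)\to E(p_2)$: it requires passing to the limit in a cell problem posed on an \emph{unbounded} two-scale domain, hence combining the truncation-in-$y_1$ device of \cite{MR3299352,MR3441209,MR3565416} with the homogenization-in-the-period analysis of \S\ref{sec:effect-probl-obta}, and verifying that the two limiting operations (period $\to0$ and truncation radius $\to\infty$) may be interchanged, so that the diagonal limit $\eta=\epsilon\to0$ produces the same effective flux limiter as the iterated one.
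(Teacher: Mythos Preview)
Your overall strategy matches the paper's exactly: introduce the relaxed semi-limits, build the correctors $(\xi_\epsilon(p_2,\cdot),E_\epsilon(p_2))$ on the dilated geometry $\Gamma_{1,\epsilon}$, identify $\lim_\epsilon E_\epsilon(p_2)=E(p_2)$, and then run Evans' method with the reduced test-functions. The slope bounds on $W$ and the three-step comparison argument in your final paragraph are essentially verbatim what the paper does in \S\ref{sec:proof-theor-refs}.

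There is, however, a genuine confusion at the crux. You correctly describe, in words, replaying the analysis of \S\ref{sec:effect-probl-obta} with amplitude frozen at $1$ and period $\epsilon\to0$, so that the strip $\{|y_1|<1\}$ homogenizes to $H^M$ with junction conditions $E^{L,M},E^{M,R}$ at $y_1=\pm1$. But when you make this ``concrete'' you pass to the \emph{rescaled} functions $y\mapsto\epsilon\,\xi_\epsilon(p_2,y/\epsilon)$ and claim their limit $W(p_2,\cdot)$ solves the three-zone problem \eqref{eq:22}. This is false: under the rescaling $z=\epsilon y$, the strip $\{|y_1|<1\}$ becomes $\{|z_1|<\epsilon\}$ and the junctions at $y_1=\pm1$ collapse to $z_1=0$, so $W$ solves only the two-zone problem \eqref{eq:19}. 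That equation alone does not pin down the ergodic constant (it only gives $E(p_2)\ge\max(E_0^L(p_2),E_0^R(p_2))$), so your appeal to Remark~\ref{sec:proof-refeq:18-1} breaks.

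The paper resolves this by keeping two limits distinct (see Theorem~\ref{sec:trunc-cell-probl-3} versus Proposition~\ref{sec:trunc-cell-probl-5}). First, the \emph{unrescaled} sequence $\xi_{\epsilon_n}(p_2,\cdot)$ itself converges locally uniformly (it is uniformly Lipschitz and $\epsilon$-periodic in $y_2$, hence its limit $\xi(p_2,\cdot)$ is independent of $y_2$); it is this $\xi$, not $W$, that solves \eqref{eq:22}, whence $\lim_\epsilon E_\epsilon(p_2)=\max(E^{L,M}(p_2),E^{M,R}(p_2))$ by Remark~\ref{sec:proof-refeq:18-1}. Second, and separately, the \emph{rescaled} limit $W(p_2,\cdot)=\lim_\epsilon\epsilon\,\xi_\epsilon(p_2,\cdot/\epsilon)$ is introduced for a different purpose: it solves \eqref{eq:19} and satisfies the slope bounds \eqref{eq:89}, which is precisely what Step~2 of Evans' method needs. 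Once you separate these two limits your proposal goes through and coincides with the paper's proof.
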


\begin{remark}
  \label{sec:main-result-5}
Note that the same convergence result holds for the sequence $v_{\epsilon, \epsilon^q}$ where $q$ is any positive number.
\end{remark}
\subsection{Correctors}\label{sec:correctors}
Let us consider the problem in the original geometry dilated by the factor $1/\epsilon$.
Defining $\Omega_{1,\epsilon}^L, \Omega_{1,\epsilon}^R,  \Gamma_{1,\epsilon}$ and  $ H_{ \Gamma_{1,\epsilon}}$ as in \S~\ref{sec:geometry} and \S~\ref{sec:optim-contr-probl}, and recalling that $Y^\rho=\{y\in \R^2: |y_1|<\rho\}$, we consider the truncated cell problem 
\begin{equation}
\label{eq:43}
\left\{
    \begin{array}[c]{lll}
    H^L(Du(y)+p_2e_2)&\le E_{\epsilon,\rho}( p_2)&\hbox{ if } y\in   \Omega_{1,\epsilon}^L \cap  Y^\rho   ,  \\
    H^L(Du(y)+p_2e_2)&\ge E_{\epsilon,\rho}( p_2)&\hbox{ if } y\in    \Omega_{1,\epsilon}^L \cap  \overline{Y^\rho}   ,  \\
    H^R(Du(y)+p_2e_2)&\le E_{\epsilon,\rho}( p_2)&\hbox{ if } y\in   \Omega_{1,\epsilon}^R \cap  Y^\rho   ,  \\
    H^R(Du(y)+p_2e_2)&\ge E_{\epsilon,\rho}( p_2)&\hbox{ if } y\in    \Omega_{1,\epsilon}^R \cap  \overline{Y^\rho}   ,  \\
    H_{ \Gamma_{1,\epsilon}} (y,Du^L(y)+p_2e_2 ,   Du^R(y)+p_2e_2)&=
 E_{\epsilon,\rho}( p_2)&\hbox{ if } y\in    \Gamma_{1,\epsilon}    ,  \\
    u \hbox{ is $\epsilon$ periodic w.r.t. } y_2,
    \end{array}
\right.
\end{equation}
where $\rho$ is large enough such that $\Gamma_{1,\epsilon}\subset\subset Y^\rho$ and the inequations are understood in the sense of viscosity.  The following lemma can be proved with the same ingredients as in \S~\ref{sec:state-constr-probl-1}:
\begin{lemma}
\label{sec:trunc-cell-probl-1}
There is a unique  $E_{\epsilon,\rho}( p_2)\in \R$ such that (\ref{eq:43}) admits a viscosity solution. 
For this choice of  $E_{\epsilon,\rho}( p_2)$, there exists a  solution  $\xi_{\epsilon,\rho}(p_2,\cdot)$
 which is Lipschitz continuous with a Lipschitz constant  $L$ depending on $p_2$ only (independent of $\epsilon$ and $\rho$).
\end{lemma}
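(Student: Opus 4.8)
The plan is to reproduce, \emph{mutatis mutandis}, the argument establishing Lemma~\ref{lem:existence_solution_truncated_cell_pb} (itself modelled on \cite[Lemma~4.6]{MR3565416}): replacing the idealized interface $\widetilde\Gamma_\eta$ by the oscillating interface $\Gamma_{1,\epsilon}$ and the unit periodicity cell by a cell of size $\epsilon$ does not alter the structure of the proof. Since $\Gamma_{1,\epsilon}\subset\subset Y^\rho$, the truncated domain is the compact cylinder $\overline{Y^\rho}$ with $\epsilon$-periodicity in $y_2$, the interface lies in its interior, and the only state-constrained part of the boundary is $\{\,|y_1|=\rho\,\}$.

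First I would introduce, for $\delta>0$, the discounted state-constrained transmission problem obtained from \eqref{eq:43} by replacing $E_{\epsilon,\rho}(p_2)$ with $-\delta w$: namely $\delta w+H^i(Dw+p_2e_2)=0$ in $\Omega_{1,\epsilon}^i\cap Y^\rho$, $\delta w+H^i(Dw+p_2e_2)\ge 0$ in $\Omega_{1,\epsilon}^i\cap\overline{Y^\rho}$ ($i=L,R$), $\delta w+H_{\Gamma_{1,\epsilon}}(y,Dw^L+p_2e_2,Dw^R+p_2e_2)=0$ on $\Gamma_{1,\epsilon}$, with $\epsilon$-periodicity in $y_2$. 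Adapting the comparison principle of \cite{oudet2014,imbert:hal-01073954} to the state-constrained setting, as in \cite{MR3565416}, this problem has a unique bounded viscosity solution $w_\delta$, which coincides with the value function of the corresponding discounted state-constrained optimal control problem in $\overline{Y^\rho}$ with dynamics $f^i$ and running costs $\ell^i(a)+p_2\,f^i(a)\cdot e_2$.

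The core of the proof is the set of a priori bounds on $w_\delta$. Since $|p_2\,f^i(a)\cdot e_2|\le|p_2|M_f$, assumption [H1] gives $\delta\|w_\delta\|_\infty\le M_\ell+|p_2|M_f$. The controllability assumption [H3] ($B(0,\delta_0)\subset F^i$) ensures, exactly as in \cite{MR1484411} and in \S\ref{sec:state-constr-probl-1}, that $w_\delta$ is Lipschitz continuous with a constant $L$ depending only on $\delta_0$, $M_f$, $M_\ell$, $|p_2|$ and $\lambda$; in particular $L$ is independent of $\delta$, $\epsilon$ and $\rho$, the no-mixing transmission condition on $\Gamma_{1,\epsilon}$ and the state constraint on $\{|y_1|=\rho\}$ being absorbed into this estimate (a trajectory in $\Omega_{1,\epsilon}^i$ may cross $\Gamma_{1,\epsilon}$ transversally thanks to the controllability, so nearby points are joined by admissible paths of comparable length, uniformly in $\epsilon$). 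Because the cylinder $\overline{Y^\rho}/\epsilon\Z$ is compact, $\mathrm{osc}(w_\delta)$ is bounded uniformly in $\delta$, hence $\delta(w_\delta-w_\delta(0))\to 0$ uniformly; up to a subsequence, $\delta w_\delta\to -E_{\epsilon,\rho}(p_2)$ for some constant $E_{\epsilon,\rho}(p_2)\in\R$, and $w_\delta-w_\delta(0)\to\xi_{\epsilon,\rho}(p_2,\cdot)$ locally uniformly by Ascoli-Arzela. By stability of viscosity solutions, including of the transmission condition on $\Gamma_{1,\epsilon}$ (part of the theory of \cite{oudet2014,imbert:hal-01073954}), the limit $\xi_{\epsilon,\rho}(p_2,\cdot)$ is a viscosity solution of \eqref{eq:43} with constant $E_{\epsilon,\rho}(p_2)$ and inherits the Lipschitz constant $L$, which depends on $p_2$ only.

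Uniqueness of the constant follows by sandwiching: if some $\bar E$ admits a (necessarily Lipschitz) solution $\bar\xi$ of \eqref{eq:43}, set $M=\|\bar\xi\|_{L^\infty(\overline{Y^\rho})}$ and $v_\delta=\bar\xi-\bar E/\delta$; then $v_\delta-M$ is a subsolution and $v_\delta+M$ a state-constrained supersolution of the discounted problem above, so the comparison principle gives $v_\delta-M\le w_\delta\le v_\delta+M$ on $\overline{Y^\rho}$, and multiplying by $\delta$ and letting $\delta\to 0$ forces $\bar E=E_{\epsilon,\rho}(p_2)$ (this also shows that the whole family $\delta w_\delta$ converges). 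I expect the only genuinely delicate point to be the $\epsilon$- and $\rho$-independent Lipschitz estimate near the oscillating interface $\Gamma_{1,\epsilon}$, which has vertical segments and cusps, and up to the artificial boundary $\{|y_1|=\rho\}$; as in \cite{MR3565416} this is obtained from the control interpretation and the controllability [H3], which is precisely why [H3] is a standing assumption. Everything else is a routine transcription of the proof of Lemma~\ref{lem:existence_solution_truncated_cell_pb}.
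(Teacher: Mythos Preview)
Your proposal is correct and follows exactly the route the paper intends: the paper does not give a proof but simply says the lemma ``can be proved with the same ingredients as in \S\ref{sec:state-constr-probl-1}'', i.e.\ as Lemma~\ref{lem:existence_solution_truncated_cell_pb}, which in turn is modelled on \cite[Lemma~4.6]{MR3565416}. Your sketch --- discounted state-constrained problem, the $\delta\|w_\delta\|_\infty$ bound from [H1], the $\epsilon$- and $\rho$-independent Lipschitz estimate from the controllability [H3] via the control interpretation, Ascoli--Arzel\`a and stability to pass to the limit, and the sandwiching argument for uniqueness of the constant --- is precisely that standard argument, and your identification of the Lipschitz estimate near $\Gamma_{1,\epsilon}$ as the only nontrivial point is accurate.
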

As in \cite{MR3299352,MR3565416},  using the optimal control interpretation of (\ref{eq:43}), it is easy to prove that for 
a positive $K$ which may depend on  $p_2$ but not on $\rho$ and $\epsilon$ and for 
all $0<\rho_1\le \rho_2$, 
\begin{equation}
  \label{eq:61}
E_{\epsilon,\rho_1}( p_2)\leq E_{\epsilon,\rho_2}( p_2)\leq K.
\end{equation}
For $p_2\in \R$, let $E_{\epsilon}(p_2)$ be defined by
\begin{equation}
\label{eq:47}
  E_{\epsilon}( p_2)=\lim_{\rho\rightarrow \infty} E_{\epsilon,\rho}( p_2).
\end{equation}
For  a fixed  $p_2\in\R$, the {\sl global cell-problem} reads
\begin{equation}
\label{eq:48}
\left\{
    \begin{array}[c]{lll}
    H^L(Du(y)+p_2e_2)&= E_{\epsilon}( p_2)&\hbox{ if } y\in   \Omega_{1,\epsilon}^L    ,  \\
    H^R(Du(y)+p_2e_2)&= E_{\epsilon}( p_2)&\hbox{ if } y\in   \Omega_{1,\epsilon}^R    ,  \\
    H_{ \Gamma_{1,\epsilon}} (y,Du^L(y)+p_2e_2 ,   Du^R(y)+p_2e_2)&=
 E_{\epsilon}( p_2)&\hbox{ if } y\in    \Gamma_{1,\epsilon}    ,  \\
    u \hbox{ is $\epsilon$ periodic w.r.t. } y_2.
    \end{array}
\right.
\end{equation}
The following theorem can be obtained by using the same arguments as in \S~\ref{sec:passage-limit-as}:
\begin{theorem}
\label{sec:trunc-cell-probl-2}
Let $\xi_{\epsilon, \rho}( p_2,\cdot)$ be a sequence of uniformly Lipschitz continuous solutions of the truncated cell-problem (\ref{eq:43}) which converges to $ \xi_\epsilon( p_2,\cdot)$
locally uniformly on $\R^2$ as $\rho\to +\infty$. The function  $\xi_\epsilon( p_2,\cdot)$ is a Lipschitz continuous viscosity solution of the global cell-problem (\ref{eq:48}).
\end{theorem}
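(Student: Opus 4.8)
This is a standard stability statement for viscosity solutions under local uniform convergence, combined with the monotone convergence of the truncated ergodic constants; the plan is to reproduce, mutatis mutandis, the proof of Theorem~\ref{thm:stability_from_truncated_cell_pb_to_global_cell_pb} (itself modelled on \cite[Theorem~4.8]{MR3565416}), the only differences being that the interface $\widetilde \Gamma_\eta$ is now $\Gamma_{1,\epsilon}$ and that the periodicity is with respect to $y_2$ with period $\epsilon$.

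First I would collect the facts that pass to the limit for free. By Lemma~\ref{sec:trunc-cell-probl-1} the functions $\xi_{\epsilon,\rho}(p_2,\cdot)$ are Lipschitz continuous with a common constant $L=L(p_2)$, so $\xi_\epsilon(p_2,\cdot)$ is Lipschitz continuous with the same constant, and $\epsilon$-periodicity in $y_2$ is preserved in the limit. By \eqref{eq:61} the map $\rho\mapsto E_{\epsilon,\rho}(p_2)$ is nondecreasing and bounded above by $K$, hence it converges, and its limit is precisely $E_\epsilon(p_2)$ by \eqref{eq:47}. It then remains to verify that $\xi_\epsilon(p_2,\cdot)$ is a viscosity sub- and supersolution of the three relations in \eqref{eq:48}.

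For the subsolution property (the supersolution one being symmetric, with minima replacing maxima), fix $\bar y\in\R^2$ and a test-function $\phi\in\cR_{1,\epsilon}$ such that $\xi_\epsilon(p_2,\cdot)-\phi$ has a strict local maximum at $\bar y$ in a ball $B(\bar y,r)$, and choose $\rho$ so large that $B(\bar y,r)\subset Y^\rho$: then inside that ball all the relations of \eqref{eq:43} carry the right-hand side $E_{\epsilon,\rho}(p_2)$. Since $\xi_{\epsilon,\rho}(p_2,\cdot)\to\xi_\epsilon(p_2,\cdot)$ locally uniformly, there are local maximum points $y_\rho\to\bar y$ of $\xi_{\epsilon,\rho}(p_2,\cdot)-\phi$. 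If $\bar y\in\Omega_{1,\epsilon}^i$ then $y_\rho\in\Omega_{1,\epsilon}^i$ for $\rho$ large, and applying the subsolution inequality at $y_\rho$ and letting $\rho\to+\infty$ (using continuity of $H^i$ and $E_{\epsilon,\rho}(p_2)\to E_\epsilon(p_2)$) gives $H^i(D\phi(\bar y)+p_2e_2)\le E_\epsilon(p_2)$. If $\bar y\in\Gamma_{1,\epsilon}$, then after extracting a subsequence one of three cases occurs ($y_\rho\in\Omega_{1,\epsilon}^L$, $y_\rho\in\Omega_{1,\epsilon}^R$, or $y_\rho\in\Gamma_{1,\epsilon}$), and in each of them the inequality $H_{\Gamma_{1,\epsilon}}(\bar y,D\phi^L(\bar y)+p_2e_2,D\phi^R(\bar y)+p_2e_2)\le E_\epsilon(p_2)$ follows from the definitions \eqref{eq:8}, \eqref{eq:29} and \eqref{eq:30} (adapted to $\Gamma_{1,\epsilon}$), the colinearity of $D\phi^L(\bar y)-D\phi^R(\bar y)$ with $n_{1,\epsilon}(\bar y)$ (Remark~\ref{sec:test-functions}), and the semicontinuity of the half-Hamiltonians $H^{\pm,i}_{\Gamma_{1,\epsilon}}$ in the normal direction; this is exactly the passage carried out in \cite{MR3565416,imbert:hal-01073954}.

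Combining the sub- and supersolution properties yields that $\xi_\epsilon(p_2,\cdot)$ is a Lipschitz continuous viscosity solution of \eqref{eq:48}. The only point requiring care is the behaviour at the interface $\Gamma_{1,\epsilon}$ — keeping track of the side from which the extremizers $y_\rho$ approach $\bar y$ and of the corresponding half-Hamiltonian — but near $\Gamma_{1,\epsilon}$ the geometry is locally identical to the one treated in \cite{MR3565416}, so no new argument is needed; I expect this to be the main, yet routine, obstacle.
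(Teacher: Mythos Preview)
Your proposal is correct and follows exactly the approach indicated by the paper, which does not give a detailed proof but states that the theorem ``can be obtained by using the same arguments as in \S\ref{sec:passage-limit-as}'' (i.e., the proof of Theorem~\ref{thm:stability_from_truncated_cell_pb_to_global_cell_pb}, itself modelled on \cite[Theorem~4.8]{MR3565416}). Your stability argument with local uniform convergence, the handling of the three cases at the interface, and the observation that any fixed ball $B(\bar y,r)$ is eventually contained in $Y^\rho$ are precisely what that reference does.
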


Using the control interpretation of (\ref{eq:43}), we see that $ E_\epsilon(p_2)$ is bounded independently
 of  $\epsilon$.
We may thus suppose that, possibly after the extraction of a subsequence, $\lim_{\epsilon\to 0} E_\epsilon(p_2)= E(p_2)$. 
Moreover, if $E(p_2)>  E_0^R(p_2)$, then for $\epsilon$ small enough, $E_\epsilon(p_2)>  E_0^R(p_2)$  and  we can define
$\pi^R(p_2)$ and $\pi^R_\epsilon (p_2)$ as the unique real numbers such that
\begin{displaymath}
  \begin{split}
    H^R( p_2e_2+{\pi}^R(p_2)e_1)&=H^{+,1,R}( p_2e_2+{\pi}^R(p_2)e_1)=E(p_2),
    \\
    H^R( p_2e_2+{\pi}^R_\epsilon(p_2)e_1)&=H^{+,1,R}( p_2e_2+{\pi}^R_\epsilon(p_2)e_1)=E_\epsilon(p_2).
  \end{split}
\end{displaymath}
Note that ${\pi}^R(p_2)=\lim_{\epsilon\to 0} {\pi}^R_\epsilon(p_2)$.
Then we can prove exactly as Proposition~\ref{cor:slopes_omega} that  if $E( p_2)>E_0^R(p_2)$, then,  
there exist  $\rho^*=\rho^*(p_2)  >0$ and $M^*= M^*(p_2)   \in \R$ 
 such that, for all $\epsilon >0$ small enough, for all $(y_1,y_2)\in [\rho^*,+\infty)\times \R$, $h_1\ge 0$ and $h_2\in \R$,
 \begin{equation}
 \label{eq:55}
 \xi_\epsilon( p_2, y+h_1e_1+h_2 e_2)-\xi_\epsilon( p_2, y)\geq  \pi^R_\epsilon(p_2) h_1-M^*.
 \end{equation}
Of course, the same observations can be made on the left side of the interface:
if $E(p_2)>  E_0^L(p_2)$, then  for $\epsilon$ small enough, $E_\epsilon(p_2)>  E_0^L(p_2)$ and we can define
$\pi^L(p_2)$ and $\pi^L_\epsilon (p_2)$ as the unique real numbers such that
\begin{displaymath}
  \begin{split}
    H^L( p_2e_2+{\pi}^L(p_2)e_1)&=H^{-,1,L}( p_2e_2+{\pi}^L(p_2)e_1)=E(p_2),
    \\
    H^L( p_2e_2+{\pi}^L_\epsilon(p_2)e_1)&=H^{-,1,L}( p_2e_2+{\pi}^L_\epsilon(p_2)e_1)=E_\epsilon(p_2).
  \end{split}
\end{displaymath}
If $E( p_2)>E_0^L(p_2)$, then,   there exist  $\rho^*=\rho^*(p_2)  >0$ and $M^*= M^*(p_2)   \in \R$ 
 such that, for all $\epsilon >0$ small enough, for all $(y_1,y_2)\in   (-\infty,-\rho^*] \times \R$, $h_1\ge 0$ and $h_2\in \R$,
 \begin{equation}
 \label{eq:56}
 \xi_\epsilon( p_2, y+h_1e_1+h_2 e_2)-\xi_\epsilon( p_2, y)\leq  \pi^L_\epsilon(p_2) h_1+M^*.
 \end{equation}

Using similar arguments to those in \S~\ref{sec:passage-limit-as}
 and  Remark~\ref{sec:proof-refeq:18-1}, we obtain the following results:
\begin{theorem}
\label{sec:trunc-cell-probl-3}
Let $(\epsilon_n)$ be a sequence of positive numbers tending to $0$  such that the solution of (\ref{eq:48})
$(\xi_{\epsilon_n}( p_2,\cdot), E_{\epsilon_n}(p_2))$ satisfy:  
$E_{\epsilon_n} (p_2)\to E(p_2)$ and $\xi_{\epsilon_n}( p_2,\cdot)\to \xi( p_2,\cdot)$ locally uniformly.  
Then $\xi( p_2,\cdot)$ depends on $y_1$ only and is Lipchitz continuous,
 $(\xi( p_2,\cdot), E(p_2) )$ is a  solution of (\ref{eq:22}) and $ E(p_2)= \max(E^{L,M}(p_2), E^{M,R}(p_2) )$.
\end{theorem}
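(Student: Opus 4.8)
The idea is to run, inside the corrector problem, the very same two-step homogenization already carried out in \S\S~\ref{sec:effect-probl-obta} and \ref{sec:second-passage-limit}: extract the $y_1$-dependent limit from the $\epsilon_n$-periodicity, identify the bulk equations of (\ref{eq:22}) by classical homogenization, recover the transmission conditions at $y_1=\pm 1$ by a two-scale argument localized near those lines, and then pin down $E(p_2)$ via the uniqueness statement of Remark~\ref{sec:proof-refeq:18-1}.

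First, by Lemma~\ref{sec:trunc-cell-probl-1} and Theorem~\ref{sec:trunc-cell-probl-2} the functions $\xi_{\epsilon_n}(p_2,\cdot)$ are Lipschitz continuous with a constant $L(p_2)$ independent of $n$, so $\xi(p_2,\cdot)$ is Lipschitz with the same constant; since $\xi_{\epsilon_n}(p_2,\cdot)$ is $\epsilon_n$-periodic in $y_2$ and $\epsilon_n\to 0$, the limit $\xi(p_2,\cdot)$ depends on $y_1$ only, and we may normalize $\xi(p_2,0)=0$. Next, for $y_1>1$ (resp. $y_1<-1$) and $n$ large a neighbourhood of the point lies entirely in $\Omega^R_{1,\epsilon_n}$ (resp. $\Omega^L_{1,\epsilon_n}$), where $\xi_{\epsilon_n}(p_2,\cdot)$ solves $H^R(D\xi_{\epsilon_n}+p_2e_2)=E_{\epsilon_n}(p_2)$ (resp. the analogous equation with $H^L$); stability of viscosity solutions then yields the first and third lines of (\ref{eq:22}) for $(\xi,E(p_2))$. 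For $|y_1|<1$ the interface $\Gamma_{1,\epsilon_n}$ is $\epsilon_n$-periodic in $y_2$ and fills the region, alternating between the two media with period $\epsilon_n$; Evans' perturbed test-function method, following the proof of Lemma~\ref{lem:rescaling_omega} with the one-dimensional corrector $\zeta$ of Proposition~\ref{sec:effect-hamilt-omeg-1} (case $\eta=1$), then shows that $(\xi,E(p_2))$ solves the second line of (\ref{eq:22}), i.e. $H^M(\partial_{y_1}\xi\,e_1+p_2e_2)=E(p_2)$ on $(-1,1)$.

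The remaining, and main, task is to obtain the transmission conditions at $y_1=\pm 1$; I would treat $y_1=1$, the line $y_1=-1$ being symmetric. The crucial geometric observation is that, after dilation by $1/\epsilon_n$ about $\{y_1=1\}$, the curve $\Gamma_{1,\epsilon_n}$ converges locally to the interface $\widetilde\Gamma_1$ of \S~\ref{sec:state-constr-probl}, with $\widetilde\Omega^R_1$ (resp. $\widetilde\Omega^L_1$) corresponding to $\Omega^R_{1,\epsilon_n}$ (resp. $\Omega^L_{1,\epsilon_n}$). One can then reproduce Steps~1--3 of the proof of Theorem~\ref{th:convergence_result} in \S~\ref{sec:proof-theor-refth:c-1}: test the transmission condition with the reduced class $\cR^\Pi$ of Definition~\ref{def:test_functions_set_restricted} (licit since $\xi$ is Lipschitz, by Theorem~\ref{th:restriction_set_of_test_functions}), build the perturbed test-function from a global corrector $\chi(p_2,\cdot)$ of (\ref{cellpE}) (Theorem~\ref{thm:stability_from_truncated_cell_pb_to_global_cell_pb}) and its rescaled limit $W(p_2,\cdot)$ of Lemma~\ref{lem:rescaling_omega}, and use the slope estimates \eqref{eq:55}--\eqref{eq:56} together with the analogues of Propositions~\ref{cor:slopes_omega}--\ref{cor:control_slopes_W} to get the comparison inequalities on the sphere $\partial B(\bar z,r)$. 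This gives that $(\xi,E(p_2))$ satisfies, in the viscosity sense, $\max\big(E^{M,R}(p_2),H^{M,R}(\partial_{y_1}\xi(p_2,1^-)e_1+p_2e_2,\partial_{y_1}\xi(p_2,1^+)e_1+p_2e_2)\big)=E(p_2)$, and similarly the fourth line of (\ref{eq:22}) at $y_1=-1$. Hence $(\xi(p_2,\cdot),E(p_2))$ is a Lipschitz continuous solution of the global cell problem (\ref{eq:22}), and Remark~\ref{sec:proof-refeq:18-1} forces $E(p_2)=\max(E^{L,M}(p_2),E^{M,R}(p_2))$, which is (\ref{eq:18}).

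\textbf{Expected main obstacle.} The whole difficulty is concentrated in the transmission step: although $\xi_{\epsilon_n}$ is a corrector for the fully two-scale geometry carrying the \emph{single} ergodic constant $E_{\epsilon_n}(p_2)$, one must show that its limit ``feels'', at $y_1=\pm 1$, exactly the intermediate flux limiters $E^{L,M}(p_2)$ and $E^{M,R}(p_2)$ produced in \S~\ref{sec:effect-probl-obta}. Making this rigorous requires the two-scale zoom near the lines $y_1=\pm 1$, the reduced test-function device, and a control of the slopes at infinity of the auxiliary correctors that is uniform in $n$; all the ingredients are in \S~\ref{sec:effect-probl-obta} and in the material just preceding the statement, but assembling them carefully — in particular verifying the comparison inequalities on $\partial B(\bar z,r)$ uniformly in $n$ — is the technical heart of the argument.
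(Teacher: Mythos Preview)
Your proposal is correct and follows exactly the route the paper intends: the paper does not spell out a proof of this theorem but simply says it is obtained ``using similar arguments to those in \S~\ref{sec:passage-limit-as} and Remark~\ref{sec:proof-refeq:18-1}'', which is precisely the scheme you describe --- Lipschitz/periodicity for the $y_1$-dependence, stability for the bulk equations in $|y_1|>1$, Evans' perturbed test-function (as in Lemma~\ref{lem:rescaling_omega}) for the $H^M$ equation in $|y_1|<1$, the localized two-scale argument of \S~\ref{sec:proof-theor-refth:c-1} for the transmission conditions at $y_1=\pm 1$, and finally the uniqueness of the ergodic constant from Remark~\ref{sec:proof-refeq:18-1} to identify $E(p_2)$. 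Your identification of the transmission step as the technical heart, and of the need for uniform-in-$n$ slope control there, is also accurate.
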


\begin{corollary}
  \label{sec:trunc-cell-probl-4}
As $\epsilon\to 0$, the whole sequence $E_\epsilon(p_2)$ tends to $ \max(E^{L,M}(p_2), E^{M,R}(p_2) )$.
\end{corollary}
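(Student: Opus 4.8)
The plan is to obtain the convergence of the whole family $(E_\epsilon(p_2))_\epsilon$ from the subsequential statement in Theorem~\ref{sec:trunc-cell-probl-3} by a soft compactness argument. Fix $p_2\in\R$ and write $\bar E(p_2)=\max(E^{L,M}(p_2),E^{M,R}(p_2))$, i.e. the right-hand side of (\ref{eq:18}). Since, as already noted above, the numbers $E_\epsilon(p_2)$ are bounded uniformly in $\epsilon$, it is enough to check that every sequence $\epsilon_n\to0$ along which $E_{\epsilon_n}(p_2)$ converges has limit $\bar E(p_2)$: applying this in particular to sequences realizing $\limsup_{\epsilon\to0}E_\epsilon(p_2)$ and $\liminf_{\epsilon\to0}E_\epsilon(p_2)$ then forces these two quantities to coincide with $\bar E(p_2)$, which is the assertion.

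First I would fix such a sequence $\epsilon_n\to0$ with $E_{\epsilon_n}(p_2)\to\ell$ for some $\ell\in\R$, and, for each $n$, take $\xi_{\epsilon_n}(p_2,\cdot)$ to be the Lipschitz continuous solution of the global cell problem (\ref{eq:48}) furnished by Theorem~\ref{sec:trunc-cell-probl-2}, normalized (by subtracting a constant, which leaves (\ref{eq:48}) unchanged) so that $\xi_{\epsilon_n}(p_2,0)=0$. Next I would invoke Lemma~\ref{sec:trunc-cell-probl-1}, together with the stability of its Lipschitz bound in the passage $\rho\to\infty$, to note that all the $\xi_{\epsilon_n}(p_2,\cdot)$ share one and the same Lipschitz constant $L=L(p_2)$; combined with the pinning at the origin, this makes the family locally equibounded and equicontinuous, so the Ascoli-Arzela theorem yields a subsequence, not relabelled, along which $\xi_{\epsilon_n}(p_2,\cdot)\to\xi(p_2,\cdot)$ locally uniformly on $\R^2$. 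The sequence $(\epsilon_n)$ now satisfies exactly the hypotheses of Theorem~\ref{sec:trunc-cell-probl-3}, which gives $\ell=\lim_n E_{\epsilon_n}(p_2)=\bar E(p_2)$. Since $\ell$ was an arbitrary subsequential limit of the bounded family $(E_\epsilon(p_2))_\epsilon$, the whole family converges to $\bar E(p_2)$ as $\epsilon\to0$.

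I do not expect any real obstacle: the substantive content is already packaged in Theorem~\ref{sec:trunc-cell-probl-3}, and the only point one must have in hand is that the correctors $\xi_\epsilon(p_2,\cdot)$ can be chosen with a Lipschitz constant independent of $\epsilon$ --- precisely what Lemma~\ref{sec:trunc-cell-probl-1} guarantees --- so that the Ascoli-Arzela extraction is legitimate. The remainder is the familiar observation that a bounded sequence all of whose subsequential limits agree is convergent.
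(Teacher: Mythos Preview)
Your argument is correct and is exactly the intended one: the paper states the corollary without proof because it is the standard subsequence-plus-uniqueness-of-limit argument built on Theorem~\ref{sec:trunc-cell-probl-3} and the uniform bound on $E_\epsilon(p_2)$ recorded just before that theorem. Your use of the $\epsilon$-independent Lipschitz bound from Lemma~\ref{sec:trunc-cell-probl-1} (together with a normalization at the origin) to justify the Ascoli--Arzel\`a extraction is precisely what the paper has in mind.
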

The construction of the function $\xi$ has been useful to characterize $E(p_2)$ by (\ref{eq:18}).
 However, since $\xi$ is the solution of~(\ref{eq:22}), it is not directly connected to the oscillating interface $\Gamma_{\epsilon,\epsilon}$, and $\xi$  
will not be useful when applying Evans'  method 
to prove Theorem~\ref{sec:main-result-4}.
The function used in Evans' method will rather be $\xi_\epsilon$ and the following proposition 
will therefore be useful. We skip its proof, because it is very much similar to that of  Proposition~\ref{cor:control_slopes_W}.

\begin{proposition}
\label{sec:trunc-cell-probl-5}
For any $p_2>0$, there exists a sequence
$(\epsilon_n)$ of positive numbers tending to $0$  such that  $y\mapsto \epsilon_n \xi_{\epsilon_n}( p_2,\frac y {\epsilon_n})$ converges   locally uniformly to $y\mapsto W(p_2, y)$. The function $W(p_2,\cdot)$ does not depend on $y_2$ and is a Lipschitz continuous viscosity solution of (\ref{eq:19}).   
 By adding a same  constant to $W(p_2,\cdot)$ and $\xi_{\epsilon_n}( p_2,\cdot)$, one can impose that $W( p_2,0)=0$. 
Moreover,
\begin{equation}
\label{eq:89}
- \widehat{\pi}^L(p_2) (y_1)^-
+ \overline{\pi}^R(p_2) (y_1)^+ \le W(p_2,y) \le 
- \overline{\pi}^L(p_2) (y_1)^-
+ \widehat{\pi}^R(p_2) (y_1)^+,
\end{equation}
where for $i=L,R$, the values $\overline{\pi}^i(p_2)$ and  $\widehat{\pi}^L(p_2)$  are defined in (\ref{eq:26})-(\ref{eq:38}).
\end{proposition}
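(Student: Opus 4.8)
The plan is to follow the scheme of Proposition~\ref{cor:control_slopes_W} (and of Lemma~\ref{lem:rescaling_omega}), with the simplification that after the rescaling $y\mapsto y/\epsilon$ the two-scale interface $\Gamma_{1,\epsilon}$ collapses onto the single line $\Gamma$, so that no intermediate region or Hamiltonian survives. First I would fix $p_2>0$, take the solution $\xi_\epsilon(p_2,\cdot)$ of the global cell problem \eqref{eq:48} provided by Theorem~\ref{sec:trunc-cell-probl-2} — which by Lemma~\ref{sec:trunc-cell-probl-1} is $L(p_2)$-Lipschitz uniformly in $\epsilon$ and $\epsilon$-periodic in $y_2$ — normalize it so that $\xi_\epsilon(p_2,0)=0$, and set $W_\epsilon(p_2,y):=\epsilon\,\xi_\epsilon(p_2,y/\epsilon)$. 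Then $W_\epsilon(p_2,\cdot)$ is still $L(p_2)$-Lipschitz, vanishes at $0$, and is $\epsilon^2$-periodic in $y_2$. By Ascoli--Arzelà I would extract $\epsilon_n\to 0$ with $W_{\epsilon_n}(p_2,\cdot)\to W(p_2,\cdot)$ locally uniformly, $W(p_2,\cdot)$ being $L(p_2)$-Lipschitz with $W(p_2,0)=0$; and since $W_{\epsilon_n}(p_2,\cdot)$ is $\epsilon_n^2$-periodic in $y_2$ with Lipschitz constant independent of $n$, the limit cannot depend on $y_2$.

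Next I would pass to the limit in the equation away from $\Gamma$. By scale invariance of the $H^i$ under $u\mapsto\epsilon\,u(\cdot/\epsilon)$, $W_\epsilon(p_2,\cdot)$ is a viscosity solution of $H^i(Du+p_2e_2)=E_\epsilon(p_2)$ in $\epsilon\,\Omega_{1,\epsilon}^i$, with the transmission condition on $\epsilon\,\Gamma_{1,\epsilon}$. Since (with $\eta=1$) the graph $\Gamma_{1,\epsilon}$ lies in $\{|y_1|\le 1+\epsilon\|g\|_\infty\}$, the rescaled interface $\epsilon\,\Gamma_{1,\epsilon}$ lies in $\{|y_1|\le\epsilon(1+\epsilon\|g\|_\infty)\}$; hence for any $\bar y$ with $\bar y_1\neq 0$ and any $r_0<|\bar y_1|/2$, the ball $B(\bar y,r_0)$ is contained in $\epsilon\,\Omega_{1,\epsilon}^i$ ($i=L$ if $\bar y_1<0$, $i=R$ if $\bar y_1>0$) once $\epsilon$ is small, and there $W_\epsilon(p_2,\cdot)$ solves $H^i(Du+p_2e_2)=E_\epsilon(p_2)$. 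Using $E_{\epsilon_n}(p_2)\to E(p_2)=\max(E^{L,M}(p_2),E^{M,R}(p_2))$ (Corollary~\ref{sec:trunc-cell-probl-4} and \eqref{eq:18}) together with the standard stability of viscosity solutions under locally uniform convergence, $W(p_2,\cdot)$ solves $H^i(Du+p_2e_2)=E(p_2)$ on that ball. As $\bar y$ with $\bar y_1\neq 0$ is arbitrary and $W$ depends only on $y_1$, this is exactly \eqref{eq:19}; the freedom to add a common constant to $W$ and to $\xi_{\epsilon_n}$ is precisely what was used to impose $W(p_2,0)=0$.

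Finally I would establish \eqref{eq:89}. The upper half follows purely from the subsolution part of the previous step: a one-dimensional viscosity subsolution of $H^R(\,\cdot\,e_1+p_2e_2)=E(p_2)$ on $(0,\infty)$ is a.e.\ differentiable with slope in the sublevel interval $\{q:H^R(qe_1+p_2e_2)\le E(p_2)\}$, whose right endpoint is $\widehat\pi^R(p_2)$ by \eqref{eq:37}--\eqref{eq:38}; integrating from $0$ gives $W(p_2,y)\le\widehat\pi^R(p_2)(y_1)^+$, and symmetrically $W(p_2,y)\le-\overline\pi^L(p_2)(y_1)^-$ on the left. For the lower half there are two cases. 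If $E(p_2)=E_0^i(p_2)$ then $[\overline\pi^i(p_2),\widehat\pi^i(p_2)]$ coincides with that same sublevel interval, so the subsolution argument also yields the lower inequality. If instead $E(p_2)>E_0^R(p_2)$ (resp.\ $>E_0^L(p_2)$) I would start from the one-sided slope estimate \eqref{eq:55} (resp.\ \eqref{eq:56}), already in hand just before the statement: rewriting \eqref{eq:55} in the rescaled variables with base point $(\rho^*,0)$ and letting $\epsilon_n\to 0$ (using $W_{\epsilon_n}(p_2,(\epsilon_n\rho^*,0))\to 0$, $\epsilon_n M^*\to 0$, $\pi^R_{\epsilon_n}(p_2)\to\pi^R(p_2)=\overline\pi^R(p_2)$) gives $W(p_2,y)\ge\overline\pi^R(p_2)(y_1)^+$, and likewise \eqref{eq:56} with base point $y/\epsilon_n$ (whose first coordinate is $\le-\rho^*$ for $n$ large) gives $W(p_2,y)\ge-\widehat\pi^L(p_2)(y_1)^-$. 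Combining the four inequalities yields \eqref{eq:89}.

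I expect the substantive difficulty to be essentially already discharged: the analytically delicate input is the pair of one-sided slope bounds \eqref{eq:55}--\eqref{eq:56}, obtained before the statement by transposing Proposition~\ref{cor:slopes_omega}. What remains is mostly bookkeeping — tracking the double rescaling $u\mapsto\epsilon\,u(\cdot/\epsilon)$ through those estimates and through the periodicity, and matching the endpoints of the sublevel sets $\{q:H^i(qe_1+p_2e_2)\le E(p_2)\}$ with $\overline\pi^i,\widehat\pi^i$ in the possibly degenerate case $E(p_2)=E_0^i(p_2)$. The one genuinely sharp point — the same as in Proposition~\ref{cor:control_slopes_W} — is that when $E(p_2)>E_0^R(p_2)$ the subsolution property alone controls $\tfrac{dW}{dy_1}$ from below only by the \emph{smaller} root of $H^R(\,\cdot\,e_1+p_2e_2)=E(p_2)$, which is strictly less than $\overline\pi^R(p_2)$; the sharp lower bound genuinely requires the optimal-control/state-constraint estimate \eqref{eq:55}, and not just the PDE.
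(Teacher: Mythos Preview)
Your proposal is correct and follows essentially the same route the paper intends: the paper explicitly says the proof is ``very much similar to that of Proposition~\ref{cor:control_slopes_W}'', and your argument is precisely that transposition, together with the analogue of Lemma~\ref{lem:rescaling_omega}. You also correctly identify the main simplification relative to Lemma~\ref{lem:rescaling_omega}: since $\epsilon\,\Gamma_{1,\epsilon}\subset\{|y_1|\le\epsilon(1+\epsilon\|g\|_\infty)\}$, the rescaled interface collapses to $\Gamma$, so no perturbed test-function with the corrector $\zeta$ is needed and plain stability of viscosity solutions suffices to get \eqref{eq:19} on either side. Your treatment of \eqref{eq:89}, splitting into the cases $E(p_2)=E_0^i(p_2)$ (where the sublevel interval of $q\mapsto H^i(qe_1+p_2e_2)$ coincides with $[\overline\pi^i,\widehat\pi^i]$) and $E(p_2)>E_0^i(p_2)$ (where the sharp lower bound genuinely requires \eqref{eq:55}--\eqref{eq:56}), matches the paper's proof of Proposition~\ref{cor:control_slopes_W}; your closing remark that the PDE alone cannot rule out the smaller root when $E(p_2)>E_0^R(p_2)$ is exactly the point made there.
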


\subsection{Proof of Theorem~\ref{sec:main-result-4}}
\label{sec:proof-theor-refs}
The proof of Theorem~\ref{sec:main-result-4} is similar to that of Theorem~\ref{th:convergence_result}.
Let us  consider the relaxed semi-limits
 \begin{equation}
\label{eq:90}
 \overline{v}(z)={\limsup_\epsilon}^{*} {v}_{\epsilon,\epsilon}(z)=\limsup_{z'\to z, \epsilon\to 0}{v}_{\epsilon,\epsilon}(z')
 \quad \mbox{ and } \quad \underline{v}(z)=\underset{\epsilon}{{\liminf}_{*}}{v}_{\epsilon,\epsilon}(z)
=\liminf_{z'\to z, \epsilon\to 0}{v}_{\epsilon,\epsilon}(z').
 \end{equation}
Note that  $ \overline{v}$ and $ \underline{v}$ are well defined, 
since  $\left( v_{\epsilon,\epsilon}\right)_\epsilon$ is uniformly bounded.
It is classical to check that the functions $\overline{v}(z)$ and $\underline{v}(z)$
 are respectively a bounded subsolution and a bounded supersolution in $\Omega^i$, $i=L,R$, of
\begin{equation}
\label{eq:91}
\l u(z)+H^i(Du(z))= 0.
\end{equation}
 We will prove that $\overline{v}$  and $\underline{v}$  are respectively a subsolution and a supersolution
 of (\ref{eq:16}). From the comparison theorem proved in \cite{barles2013bellman,imbert:hal-01073954,oudet2014}, 
this will imply that  $\overline{v}=\underline{v}=v=\lim_{\epsilon\to 0} v_{\epsilon,\epsilon}$.  We just have to check the transmission condition (\ref{eq:17}).
\\
Take $\bar z =(0, \bar z_2) \in \Gamma$. It is possible to use the counterpart of Theorem \ref{th:restriction_set_of_test_functions} 
because $\overline{v}$ is Lipschitz continuous, see  Remark~\ref{sec:reduced-set-test-1}.
\\
Take a  test-function  of the form 
\begin{equation}
\label{eq:92}
\varphi(z+ t e_1)= \psi(z)+   \left( \overline \pi^R\left( \partial_{z_2}\psi(\bar z) \right) 1_{t>0}+
\widehat \pi^L \left(  \partial_{z_2}\psi(\bar z)\right) 1_{t<0} \right) t, 
\quad \forall z\in  \Gamma , t\in \R,
\end{equation}
for  a $\cC^1$ function $\psi: \Gamma \to \R$,  such that $\overline{v}-\varphi$ has a strict local maximum at $\bar z$ and that $\overline{v}(\bar z)=\varphi(\bar z)$. \\
 Let us argue by contradiction and assume that 
\begin{equation}
\label{eq:93}
\lambda \varphi(\bar z)+ \max\left(E(\partial_{z_2}\varphi(\bar z)), H^{L,R}(D \varphi^L(\bar z),D \varphi^R(\bar z)) \right)=\theta >0.
\end{equation}
From  (\ref{eq:92}),  we see that $
 H^{L,R}(D \varphi^L(\bar z),D \varphi^R(\bar z))\le E (\partial_{z_2}\varphi(\bar z))$ and
 (\ref{eq:93}) is equivalent to 
\begin{equation}
\label{eq:94}
\lambda \psi(\bar z)+E(   \partial_{z_2}\psi(\bar z) )=\theta>0.
\end{equation}
\paragraph{Step 1}
Consider a sequence $(\xi_{\epsilon_n})_n$ as in  Proposition~\ref{sec:trunc-cell-probl-5},
 that we note $(\xi_\epsilon)$ for short.
 We claim that for $\epsilon$ and $r$ small enough, the function $\varphi^\epsilon$:
 \[\varphi^\epsilon(z)=\psi(z_2 e_2)+\epsilon\xi_\epsilon( \partial_{z_2}\psi(\bar z),\frac {z} {\epsilon})\] 
is a viscosity supersolution of 
\begin{equation}
\label{eq:95}
\left\{
    \begin{array}[c]{rcll}
   \lambda \varphi^\epsilon(z)+ H^i(D\varphi^\epsilon(z))& \ge& \frac{\theta}{2}\quad &\hbox{ if } z\in\Omega^i_{\epsilon,\epsilon}\cap B(\bar z,r),\; i=L,R, \\
   \lambda \varphi^\epsilon(z)+H_{\Gamma_{\epsilon, \epsilon}} (z,D\left( \varphi^\epsilon\right)^L(z),D\left( \varphi^\epsilon\right)^R(z)) &\ge& \frac{\theta}{2}&\hbox{ if } z\in\Gamma_{\epsilon,\epsilon}\cap B(\bar z,r).
    \end{array}
\right.
\end{equation}
Indeed, if $\nu$ is a test-function in $\cR_{\epsilon,\epsilon}$ such that $\varphi^\epsilon-\nu$ has a local minimum at
  $z^\star\in B(\bar z,r)$, then, from the definition of $\varphi^\epsilon$,
 $y\mapsto \xi_\epsilon( \partial_{z_2}\psi(\bar z),y)-\frac{1}{\epsilon}\left(\nu(\epsilon y)
-\psi(\epsilon y_2 e_2) \right)$
 has a local minimum at $\frac{z^\star}{\epsilon}$.
\\
If $\frac{z^\star}{\epsilon}\in \Omega^i_{1,\epsilon}$, for $i=L$ or  $R$, then  , from (\ref{eq:48}), 
$H^i(D\nu(z^\star) -\partial_{z_2} \psi( z^\star_2 e_2) e_2  + \partial_{z_2}\psi(\bar z) e_2)
\ge E_\epsilon(\partial_{z_2}\psi(\bar z))$.
From the regularity properties of   $H^i$,
 \begin{displaymath}
H^i(D\nu(z^\star) -\partial_{z_2} \psi(z^\star_2 e_2) e_2  + \partial_{z_2}\psi(\bar z) e_2)
= H^i(D\nu(z^\star)) + o_{r\to 0}(1),
\end{displaymath}
thus, using also the convergence of $E_\epsilon$ to $E$,
\begin{equation*}
\lambda \varphi^\epsilon(z^\star)+ H^i(D\nu(z^\star)) 
\ge E(\partial_{z_2}\psi(\bar z)) +\lambda
\left(\psi(z^\star_2 e_2)+\epsilon \xi_\epsilon( \partial_{z_2}\psi(\bar z),\frac {z^\star} {\epsilon})\right)
+o_{r\to 0}(1) +o_{\epsilon\to 0}(1).  
\end{equation*}
From (\ref{eq:94}), this implies that 
\begin{equation*}
\lambda \varphi^\epsilon(z^\star)+ H^i(D\nu(z^\star))\ge \theta +\lambda \epsilon\xi_\epsilon( \partial_{z_2}\psi(\bar z),\frac {z^\star} {\epsilon})+o_{r\to 0}(1)+o_{\epsilon\to 0}(1).
\end{equation*}
From the  Lipschitz continuity of $\xi_\epsilon( \partial_{z_2}\psi(\bar z),\cdot)$ with a constant independent of $\epsilon$, we get that 
$ \epsilon\xi_\epsilon( \partial_{z_2}\psi(\bar z),\frac {z^\star} {\epsilon})=  \epsilon\xi_\epsilon( \partial_{z_2}\psi(\bar z),\frac {\bar z} {\epsilon}) +o_{r\to 0}(1)$. Moreover it is easy to check that
$ \epsilon\xi_\epsilon( \partial_{z_2}\psi(\bar z),\frac {\bar z} {\epsilon})= o_{\epsilon\to 0}(1)$.
Therefore, for $r$ and $\epsilon$ small enough, $\lambda \varphi^\epsilon(z^\star)+ H^i(D\nu(z^\star))\ge \theta/2$.
\\
If $\frac{ z^\star }{\epsilon}\in  \Gamma_{1,\epsilon}$, then we have 
\[H^{+,L}_{\Gamma_{1,\epsilon}} ( \frac{ z^\star }{\epsilon}, D\nu^L (z^\star) -\partial_{z_2} \psi(z^\star_2 e_2) e_2  + \partial_{z_2}\psi(\bar z) e_2,   )\ge E_\epsilon(\partial_{z_2}\psi(\bar z))\]
or 
\[H^{+,R}_{\Gamma_{1,\epsilon}} ( \frac{ z^\star }{\epsilon}, D\nu^R (z^\star) -\partial_{z_2} \psi(z^\star_2 e_2) e_2  + \partial_{z_2}\psi(\bar z) e_2,   )\ge E_\epsilon(\partial_{z_2}\psi(\bar z))
\]
Since the Hamiltonians $H^{\pm,i}_{\Gamma_{1,\epsilon}}$ enjoys the same regularity properties as  $H^{\pm i}$,
 it is possible to use the same arguments as  in the case when $\frac{z^\star}{\epsilon}\in \Omega^i_{1,\epsilon}$.
 For  $r$ and $\epsilon$ small enough, 
\[\lambda \varphi^\epsilon(z^\star)+H_{\Gamma_{\epsilon, \epsilon}} (z^\star,D\left( \varphi^\epsilon\right)^L(z^\star),D\left( \varphi^\epsilon\right)^R(z^\star)) \ge \frac{\theta}{2}.
   \] 
The claim that $\varphi^\epsilon$  is a supersolution of (\ref{eq:95}) is proved.
\paragraph{Step 2} Let us prove that there exist some positive constants $K_r>0$ and  $\epsilon_0>0$ such that
\begin{equation}
\label{eq:99}
 v_{\epsilon,\epsilon}(z)+K_r\le \varphi^\epsilon(z), \quad \forall z\in \partial B(\bar z, r), \;\forall \epsilon\in (0, \epsilon_0).
\end{equation}
Indeed, since $\overline{v}-\varphi$ has a strict local maximum at $\bar z$ 
and since $\overline{v}(\bar z)=\varphi(\bar z)$, 
there exists a positive constant $\tilde K_r>0$ such that 
$\overline{v}(z)+\tilde K_r\le \varphi(z)$ for any $z\in \partial B(\bar z, r)$.
Since $\ds \overline{v}={\limsup_\epsilon}^{*} v_{\epsilon,\epsilon}$,  there exists $\tilde \epsilon_0>0$ such that 
\begin{equation}
\label{eq:96}
v_{\epsilon,\epsilon}(z)+\frac{\tilde K_r}{2}\le \varphi(z)\quad \hbox{
for any $0<\epsilon<\tilde \epsilon_0$ and $z\in \partial B(\bar z, r)$}.
\end{equation}
On the other hand, from  Proposition~\ref{sec:trunc-cell-probl-5},
\begin{equation}
\label{eq:98}
   \psi(z_2 e_2)+W( \partial_{z_2}\psi(\bar z),z) \ge
 \psi(z_2 e_2)+   \left( \overline \pi^R   ( \partial_{z_2}\psi(\bar z) ) 1_{z_1>0}
+\widehat \pi^L( \partial_{z_2}\psi(\bar z)) 1_{z_1<0} \right) z_ 1= \varphi(z).
\end{equation}
Moreover,  $z\mapsto \varphi^\epsilon(z)$ converges locally uniformly to
 $z\mapsto \psi(z_2e_2)+W( \partial_{z_2}\psi(\bar z),z)$ as $\epsilon$ tends to $0$.
By collecting the latter observation, (\ref{eq:98}) and (\ref{eq:96}), we get \eqref{eq:99} 
for some constants $K_r>0$ and $\epsilon_0>0$.
\paragraph{Step 3}
From the previous steps, we find by comparison that for $r$ and $\epsilon$ small enough, 
\begin{displaymath}
 v_{\epsilon,\epsilon}(z)+K_r\le \varphi^\epsilon(z) \quad \quad \forall z \in B(\bar z, r).
\end{displaymath}
Taking the $\limsup$ as $z=\bar z$ and $\epsilon\to 0$,  we obtain
\begin{displaymath}
\overline{v}(\bar z)+K_r\le \psi(\bar z)=\varphi(\bar z)=\overline{v}(\bar z),
\end{displaymath}
which cannot happen. The proof is completed. \qed

\appendix
 \section{Proofs of Propositions \ref{cor:slopes_omega} and \ref{cor:control_slopes_W}}
\label{sec:proofs-prop-refc}

\begin{lemma}[Control of slopes on the truncated domain]
\label{SLOPESLemma1}
 With $E^{M,R}$ and $E^R_0$ respectively defined in~(\ref{eq:def_E}) and (\ref{def:E_0^i}),  let  $p_2\in\R$ be such that 
$E^{M,R}(p_2)>E_0^R(p_2)$.
There exists $\rho^*=\rho^*( p_2)>0$, $\delta^*=\delta^*(p_2)>0$, $m(p_2,\cdot):  [\rho^*,+\infty)  \times [0, \delta^*]   
 \to \R_+$ satisfying \\
$\lim_{\delta \to 0^+} \lim_{ \rho \to +\infty} m(p_2, \rho,\delta)=0$ and $M^*=M^*(p_2)$, such that for all
  $\delta\in (0,\delta^*]$, $\rho\ge \rho^*$, $(y_1,y_2)\in [\rho^*,\rho]\times \R$, $h_1\in [0,\rho-y_1]$ and $h_2\in \R$,
\begin{equation}
\label{slope3}
\chi_\rho( p_2, y+h_1e_1+h_2 e_2)-\chi_\rho( p_2, y)\geq  (\Pi^R(p_2)-m(p_2,\rho,\delta)) h_1-M^*,
\end{equation}
where 
$\Pi^R(p_2)$ is  given by \eqref{eq:special_notation_for_Pi} and 
$\chi_\rho(p_2,\cdot)$ is a solution of \eqref{trunc-cellp} given by Lemma \ref{lem:existence_solution_truncated_cell_pb}.

\medskip

 Similarly, let  $p_2\in\R$ be such that $E^{M,R}( p_2)>E_0^M(p_2)$. 
There exists $\rho^*>0$, $\delta^*>0$, $m(p_2,\cdot)$ and $M^*$ as above, such that
 for all  $\delta\in (0,\delta^*]$, $\rho\ge \rho^*$, $(y_1,y_2)\in [-\rho,-\rho^*]\times \R$, $h_1\in [0,\rho+y_1]$ and $h_2\in \R$,
\begin{equation}
\label{slope3_bis}
\chi_\rho(p_2, y-h_1e_1+h_2 e_2)-\chi_\rho( p_2, y)\geq  -(\Pi^M(p_2)+m(p_2,\rho,\delta)) h_1-M^*.
\end{equation}

\end{lemma}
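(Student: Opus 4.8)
The plan is to prove both inequalities by the same comparison scheme used in \cite{MR3299352,MR3441209,MR3565416}: for each admissible base point $y$ (in the right, resp. left, part of the truncated domain) I would build an explicit \emph{strict} subsolution of \eqref{trunc-cellp} on a half-strip attached to $y$, whose slope in the $e_1$-direction is the target value $\Pi^R(p_2)-m$ (resp. $-\Pi^M(p_2)-m$), and then invoke the state-constrained comparison principle for \eqref{trunc-cellp}. The supersolution (state-constraint) condition imposed on $\{|y_1|=\rho\}$ in \eqref{trunc-cellp} is precisely what makes it possible to propagate the ordering from the inner boundary of the half-strip without controlling it on the side $\{|y_1|=\rho\}$.

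First I would fix the constants. Since $E^{M,R}(p_2)>E_0^R(p_2)$ (resp. $>E_0^M(p_2)$) and, by \eqref{eq:def_E}, $\lambda_\rho(p_2)\uparrow E^{M,R}(p_2)$ with $\lambda_\rho(p_2)$ bounded, there are $\tau=\tau(p_2)>0$ and $\rho_0(p_2)$ with $\lambda_\rho(p_2)-\tau>\max(E_0^R(p_2),E_0^M(p_2))$ for $\rho\ge\rho_0$; I would set $\delta^*=\tau$ and take $\rho^*\ge\rho_0$ large enough (depending on $\eta$ and $g$) that $\{y_1\ge\rho^*\}\subset\widetilde\Omega_\eta^R$ and that $\widetilde\Gamma_\eta\cap\{y_1\le-\rho^*\}$ consists only of the vertical lines where $\tilde n_\eta=\pm e_2$, which is possible because the continuous $1$-periodic function $g$ is bounded. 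For $\rho\ge\rho^*$ and $\delta\in(0,\delta^*]$ the number $\lambda_\rho(p_2)-\delta$ lies strictly between $E_0^R(p_2)$ and $E^{M,R}(p_2)$, so I would let $\Pi^R(p_2)-m(p_2,\rho,\delta)$ be the unique point $q$ on the nondecreasing branch of $q\mapsto H^R(p_2e_2+qe_1)$ with $H^R(p_2e_2+qe_1)=\lambda_\rho(p_2)-\delta$; then $m>0$ because $H^R(p_2e_2+\Pi^R(p_2)e_1)=E^{M,R}(p_2)>\lambda_\rho(p_2)-\delta$, and letting $\rho\to\infty$ and then $\delta\to0^+$ forces $m\to0$ by continuity of $H^R$. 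For the second part one defines $m$ analogously, using $H^M$ and the nonincreasing branch of $q\mapsto H^M(p_2e_2+qe_1)$.

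For the estimate on the right, given $y$ with $y_1\in[\rho^*,\rho]$ I would work on $S_y=\{z:y_1<z_1<\rho\}$ (periodic in $z_2$), which lies in $\widetilde\Omega_\eta^R$, so \eqref{trunc-cellp} reduces on $S_y$ to $H^R(Du+p_2e_2)\le\lambda_\rho(p_2)$ in $S_y$ and $\ge\lambda_\rho(p_2)$ up to $\{z_1=\rho\}$. With $c_y=\min\{\chi_\rho(p_2,z'):z_1'=y_1\}$, the affine function $\underline w(z)=(\Pi^R(p_2)-m(p_2,\rho,\delta))(z_1-y_1)+c_y$ is a classical strict subsolution, $H^R(D\underline w+p_2e_2)=\lambda_\rho(p_2)-\delta$, and $\underline w\le\chi_\rho(p_2,\cdot)$ on $\{z_1=y_1\}$; the state-constrained comparison principle (valid by the coercivity [H3] and the $\eta$-periodicity, as in \cite{MR3299352,MR3441209,MR3565416}) then gives $\underline w\le\chi_\rho(p_2,\cdot)$ on $\overline{S_y}$. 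Evaluating at $z=y+h_1e_1+h_2e_2$, which lies in $\overline{S_y}$ exactly when $0\le h_1\le\rho-y_1$, and using that $\chi_\rho(p_2,\cdot)$ is $L$-Lipschitz and $\eta$-periodic in $z_2$ (Lemma \ref{lem:existence_solution_truncated_cell_pb}), so $c_y\ge\chi_\rho(p_2,y)-L\eta$, I obtain \eqref{slope3} with $M^*=L\eta$.

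The estimate on the left is the hard part, because for $y_1\le-\rho^*$ the half-strip $D_y=\{z:-\rho<z_1<y_1\}$ is not contained in a single subdomain: it crosses $\widetilde\Omega_\eta^L$, $\widetilde\Omega_\eta^R$ and the vertical portions of $\widetilde\Gamma_\eta$. The key point I would establish is that, on those vertical lines (where $\tilde n_\eta=\pm e_2$), the transmission conditions in \eqref{trunc-cellp} are \emph{exactly} the junction conditions of the one-dimensional $H^M$ cell problem of Proposition \ref{sec:effect-hamilt-omeg-1} at $\gamma_\eta^a,\gamma_\eta^b$ — a direct inspection of the definitions of $H_{\widetilde\Gamma_\eta}^{\pm,i}$ against those equations. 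Consequently, for any $\sigma\in\R$ the function $z\mapsto\sigma z_1+\zeta(p_2e_2+\sigma e_1,z_2)$, with $\zeta$ the $\eta$-periodic corrector of Proposition \ref{sec:effect-hamilt-omeg-1}, is a viscosity solution of the system \eqref{trunc-cellp} on $D_y$ at the level $H^M(p_2e_2+\sigma e_1)$. Then, with $m(p_2,\rho,\delta)>0$ chosen as above so that $\sigma:=\Pi^M(p_2)+m(p_2,\rho,\delta)$ gives $H^M(p_2e_2+\sigma e_1)=\lambda_\rho(p_2)-\delta$ (on the nonincreasing branch), the function $\underline w(z)=\sigma z_1+\zeta(p_2e_2+\sigma e_1,z_2)+c_y$, with $c_y$ chosen so that $\underline w\le\chi_\rho(p_2,\cdot)$ on $\{z_1=y_1\}$, is a strict subsolution on $D_y$; state-constrained comparison then yields $\underline w\le\chi_\rho(p_2,\cdot)$ on $\overline{D_y}$, and evaluating at $z=y-h_1e_1+h_2e_2$ (which requires $0\le h_1\le\rho+y_1$), together with the Lipschitz and periodicity bounds on $\chi_\rho(p_2,\cdot)$ and on $\zeta(p_2e_2+\sigma e_1,\cdot)$ — the latter uniform in $\rho,\delta$ since $\sigma$ stays in a bounded set — gives \eqref{slope3_bis} with $M^*$ a constant multiple of $\eta$. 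The main obstacle is thus conceptual, namely recognising that far to the left the equation, though it never involves $H^M$ explicitly, behaves in the $z_1$-direction like the $H^M$-equation so that $\Pi^M(p_2)$ is the right slope, together with the careful identification of the two families of transmission conditions; the uniformity of all constants in $\rho$ and $\delta$ is then routine.
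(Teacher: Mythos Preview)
Your overall architecture is the paper's: on the right you use an affine strict subsolution with slope on the increasing branch of $q\mapsto H^R(p_2e_2+qe_1)$ at level $\lambda_\rho(p_2)-\delta$; on the left you use $w(z)=q_{\rho,\delta}\,z_1+\zeta(q_{\rho,\delta}e_1+p_2e_2,z_2)$ with $q_{\rho,\delta}$ on the nonincreasing branch of $H^M$ at level $\lambda_\rho(p_2)-\delta$, which is exactly the paper's construction (\ref{eq:102}). Your identification of the transmission conditions far to the left with the junction conditions of Proposition~\ref{sec:effect-hamilt-omeg-1}, and your choice of $M^*$ via the $\eta$-periodicity and the Lipschitz bound $L$, all match. (Minor slip: lines with normal $\pm e_2$ are \emph{horizontal}, not vertical.)

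The one genuine gap is the comparison step on the left half-strip $D_y$, which you invoke as a black box from \cite{MR3299352,MR3441209,MR3565416}. In the paper this is not taken for granted: the bulk of the proof of Lemma~\ref{SLOPESLemma1} is devoted precisely to establishing $u\le\tilde v$ on $[-\rho,\bar y_1]\times\R$, because a putative maximum of $u-\tilde v$ may be reached at a point $z$ lying \emph{both} on the state-constraint boundary $\{y_1=-\rho\}$ \emph{and} on the interface $\widetilde\Gamma_\eta$. Neither Soner's argument for state constraints nor the Imbert--Monneau argument for transmission interfaces covers this corner on its own; the paper combines them by doubling the variables with a shifted penalty $G_\tau^{\gamma,z}(x,\,y+\kappa(\tau)e_1)$ built from the Imbert--Monneau vertex test-function, the shift $\kappa(\tau)=\tau^{1/3}$ being Soner's device to force the subsolution test point $\hat x$ into the interior $\{x_1>-\rho\}$. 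This combination, and the fact that the Hamiltonians must first be modified to be quadratic at infinity so that the superlinearity bound (\ref{eq:68}) is sharp enough for Soner's trick to work at the scale $\tau^{1/2}$, are not available in ready-to-use form in the references you cite. Identifying ``the main obstacle'' as the conceptual recognition of $H^M$ undersells where the real work lies.
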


\begin{proof}
Let us focus on \eqref{slope3_bis}, since the proof of \eqref{slope3} is similar and even simpler.
 Recall that $\rho\mapsto \lambda_\rho(p_2)$ is nondecreasing and tends to $E^{M,R}(p_2)$
 as $\rho\to+\infty$. Choose $\rho^*=\rho^*(p_2)>0$ s.t.  $E^{M,R}( p_2)>\lambda_\rho(p_2)>E_0^M(p_2)$ for any $\rho\ge \rho^*$.
 Then, choose $\delta^*=\delta^*(p_2)>0$ s.t.  $\lambda_\rho(p_2)-\delta>E_0^M(p_2)$ for any $\delta\in (0,\delta^*]$ and $\rho\ge \rho^*$.
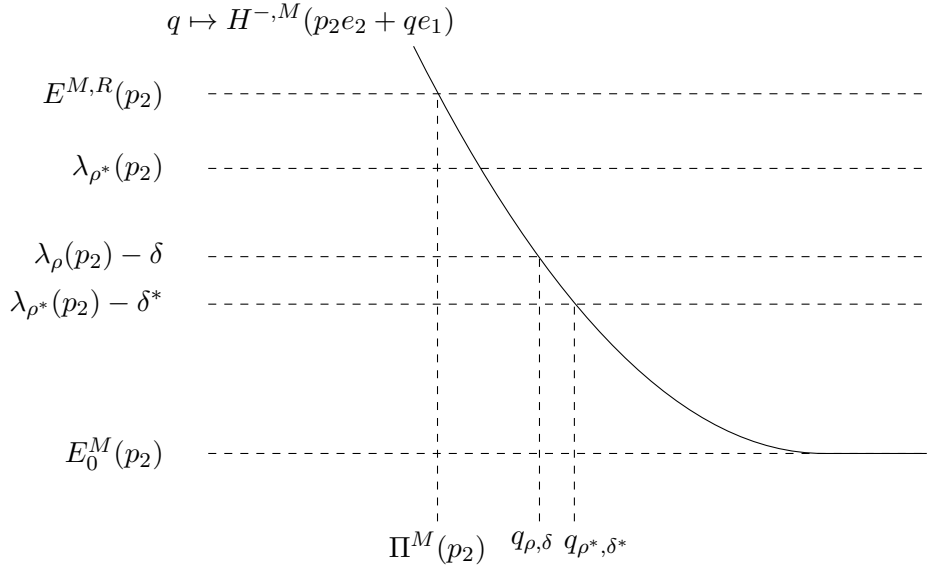
\begin{figure}[H]
\begin{center}
\begin{tikzpicture}[scale=0.90]
\draw (7.5,0) -- (9,0);
\draw (1.5,6) .. controls (3.5,2) and (5.5,0) .. (7.5,0);
\draw (0,6) node[right,above]{$q\mapsto H^{-,M}(p_2e_2+q e_1)$} ;
 \draw[dashed] (-1.5,5.3) -- (9,5.3);
 \draw (-2,5.3) node[left]{$E^{M,R}(p_2)$};
 \draw[dashed] (-1.5,0) -- (9,0);
 \draw (-2,0) node[left]{$E_0^M(p_2)$};
 \draw[dashed] (-1.5,4.2) -- (9,4.2);
 \draw (-2,4.2) node[left]{$\lambda_{\rho^*}(p_2)$};
 \draw[dashed] (-1.5,2.2) -- (9,2.2);
 \draw (-2,2.2) node[left]{$\lambda_{\rho^*}(p_2)-\delta^*$};
 \draw[dashed] (-1.5,2.9) -- (9,2.9);
 \draw (-2,2.9) node[left]{$\lambda_{\rho}(p_2)-\delta$};
\draw[dashed] (1.85,-1) -- (1.85,5.3);
 \draw (1.85,-1) node[below]{$\Pi^M(p_2)$};
 \draw[dashed] (3.34,-1) -- (3.34,2.9);
 \draw (3.34,-1) node[below]{$\!\!\!q_{\rho,\delta}$};
 \draw[dashed] (3.85,-1) -- (3.85,2.2);
 \draw (3.85,-1) node[below]{$\phantom{++}q_{\rho^*,\delta^*}$};
\end{tikzpicture}
\caption{Construction of $\rho^*,\delta^*$ and $q_\delta$: here $\lambda_\rho(p_2)-\delta<\lambda_{\rho^*}(p_2)$ but the opposite situation is possible. }
\label{fig:proof_slop_properties}
\end{center}
\end{figure}

Let us fix $\rho> \rho^*$, $\delta\in (0,\delta^*]$ and $\bar y=(\bar y_1,\bar y_2)\in [-\rho,-\rho^*]\times \R$.
 Consider $y \mapsto \chi_\rho(p_2,y)$ a solution of \eqref{trunc-cellp} as in Lemma \ref{lem:existence_solution_truncated_cell_pb}. The function $\chi_\rho(p_2,\cdot)$ is $\eta$-periodic 
with respect to $y_2$ and Lipschitz continuous with constant $L=L(p_2)$. Thus, for any $(y_1,y_2)\in \{\bar y_1\}\times \R$, 
\begin{equation*}
\chi_ \rho (p_2,y)-\chi_\rho (p_2,\bar y) \geq -L\eta.
\end{equation*}
Let us define
\begin{equation}
\label{eq:101}
   \tilde v(y)=\chi_\rho(p_2,y)-\chi_\rho (p_2,\bar y).
\end{equation}
It is a supersolution of 
\begin{equation}
\label{eq:proof_lem_control_slopes_bounded_value_pb_super}
\left\{
    \begin{array}[c]{lll}
    H^L(D\tilde v(y)+p_2e_2)&\ge \lambda_\rho( p_2)&\hbox{ if } y\in   \widetilde \Omega_{\eta}^L  \hbox{ and }-\rho\le  y_1< \bar y_1,  \\
    H^R(D\tilde v(y)+p_2e_2)&\ge \lambda_\rho( p_2)&\hbox{ if } y\in   \widetilde \Omega_{\eta}^R  \hbox{ and }-\rho\le y_1< \bar y_1,  \\
    H_{ \widetilde \Gamma_{\eta}} (D\tilde v^L(y)+p_2e_2 ,   D\tilde v^R(y)+p_2e_2,y)&\ge \lambda_\rho(p_2)&\hbox{ if } y\in   \widetilde \Gamma_{\eta} \hbox{ and }-\rho\le y_1< \bar y_1, \\
    \tilde v(y)\ge  -L\eta, & &\hbox{ if } y_1=\bar y_1,
   \\
    \tilde v \hbox{ is 1-periodic w.r.t. } y_2/\eta.
    \end{array}
\right.
\end{equation}
On the other hand, since $\rho\ge \rho^*$  and $\delta\in (0,\delta^*]$, there exists a unique $q_{\rho,\delta}\in \R$, see Figure \ref{fig:proof_slop_properties}, such that 
\begin{equation}
\label{slope1}
\lambda_\rho(p_2)-\delta=H^M(p_2e_2+q_{\rho,\delta} e_1)=H^{-,1,M}(p_2e_2+q_{\rho,\delta} e_1).
\end{equation}
Observe that $q_{\rho^*,\delta^*}\ge q_{\rho,\delta^*}\ge q_{\rho,\delta} \ge \Pi^M(p_2)$ and that 
$\lim_{\delta\to 0+} \lim_{\rho\to +\infty} q_{\rho,\delta} =\Pi^M(p_2)$. Choose $ m(p_2,\rho,\delta)=q_{\rho,\delta}-\Pi^M(p_2)\ge 0$ and consider the function $w$:
\begin{equation}
  \label{eq:102}
w(y)=   q_{\rho,\delta} y_1+ \zeta( q_{\rho,\delta} e_1+ p_2 e_2,y_2).
\end{equation}
It satisfies 
\begin{equation}
\label{eq:11}
\left\{
    \begin{array}[c]{ll}
    H^L(Dw(y)+p_2e_2)=  H^M( q_{\rho,\delta} e_1+ p_2 e_2)= \lambda_\rho( p_2)-\delta,\quad &\hbox{ if } y\in   \widetilde \Omega_{\eta}^L  \hbox{ and }-\rho<  y_1< \bar y_1,  \\
    H^R(Dw(y)+p_2e_2)=  H^M( q_{\rho,\delta} e_1+ p_2 e_2)= \lambda_\rho( p_2)-\delta,\quad  &\hbox{ if } y\in   \widetilde \Omega_{\eta}^R  \hbox{ and }-\rho < y_1< \bar y_1,  \\
    H_{ \widetilde \Gamma_{\eta}} (Dw^L(y)+p_2e_2 ,   Dw^R(y)+p_2e_2,y)=  H^M( q_{\rho,\delta} e_1+ p_2 e_2)&= \lambda_\rho( p_2)-\delta \\
&\hbox{ if } y\in   \widetilde \Gamma_{\eta} \hbox{ and }-\rho< y_1< \bar y_1.
\end{array}
\right.
\end{equation}
\begin{remark}
  \label{sec:proofs-prop-refc-1}
Note that $w$ also satisfies 
\begin{equation}
\label{eq:12}
\left\{
    \begin{array}[c]{ll}
    H^{-,1,L}   (Dw(y)+p_2e_2)\le  H^M ( q_{\rho,\delta} e_1+ p_2 e_2)= \lambda_\rho( p_2)-\delta,\quad &\hbox{ if } y\in   \widetilde \Omega_{\eta}^L  \hbox{ and }  y_1=-\rho,  \\
    H^{-,1,R}(Dw(y)+p_2e_2)\le H^M( q_{\rho,\delta} e_1+ p_2 e_2)= \lambda_\rho( p_2)-\delta,\quad  &\hbox{ if } y\in   \widetilde \Omega_{\eta}^R  \hbox{ and }  y_1=-\rho,  \\
    H^{-,1} _{ \widetilde \Gamma_{\eta}} (Dw^L(y)+p_2e_2 ,   Dw^R(y)+p_2e_2,y)\le  H^M( q_{\rho,\delta} e_1+ p_2 e_2)&= \lambda_\rho( p_2)-\delta \\
&\hbox{ if } y\in   \widetilde \Gamma_{\eta} \hbox{ and } y_1= -\rho,
\end{array}
\right.
\end{equation}
in the sense of viscosity, where $ H^{-,1} _{ \widetilde \Gamma_{\eta}}  (p ,  q ,y) $ is defined for $y_1<0$ and $p,q\in \R^2$ such that $p_1=q_1$  as the nonincreasing part  of 
 $p_1\mapsto H _{ \widetilde \Gamma_{\eta}}     (p_1e_1+p_2e_2 ,  p_1e_1+q_2 e_2 ,y) $.
\end{remark}
Moreover,  for any $(y_1,y_2)\in \{\bar y_1\}\times \R$,
\begin{displaymath}
w(y)-q_{\rho,\delta} \bar y_1\le C= C(p_2)
\end{displaymath}
so  the function $u$ defined on  $ [-\rho,\bar y_1]\times \R$ by
\begin{equation}
  \label{eq:103}
u(y)= w(y)-q_{\rho,\delta} \bar y_1- C-L\eta
\end{equation}
  is a subsolution  of (\ref{eq:11}),~(\ref{eq:12}) and is such that $u(\bar y_1, \cdot)\le -L\eta$.
\\
By a comparison result whose proof is sketched below, for all $y\in  [-\rho,\bar y_1]\times \R$,
\begin{equation}
\label{eq:proof_lem_control_slopes_bounded_value_pb_end}
v(y)\ge u(y)
\ge (\Pi^M(p_2)+m(\rho, \delta)) ( y_1-\bar y_1)-M^*,
\end{equation}
  where $M^*$ is a constant depending only of $x$ and $p_2$. This is the 
 desired result. There remains to prove the comparison result.

\paragraph{Proof of (\ref{eq:proof_lem_control_slopes_bounded_value_pb_end})}
Call $m=\max_{-\rho \le y_1\le \bar y_1} (u(y)-v(y))$ and assume by contradiction that $m>0$. Then, 
since $u(\bar y_1,\cdot)< v(\bar y_1,\cdot)$,
the maximum $m$ is achieved at some point $z$ such that $z_1<\bar y_1$. We make out three cases:
\begin{enumerate}
\item If $z_1>-\rho$, then we can reproduce the arguments of Imbert and Monneau contained in \cite[Appendix 2]{imbert:hal-01073954} and find a contradiction, 
(note that in the region $-\rho < y_1\le \bar y_1$, the interface $\widetilde \Gamma_{\eta}$
 is made of straight lines, so the arguments in  \cite{imbert:hal-01073954} can be applied in a
 straightforward manner). Alternatively,  it is possible to use  the different methods proposed in 
either \cite{barles2013bellman} or in  \cite{2016arXiv161101977B}. It is also possible to use the arguments contained in
the very recent work of Lions and Souganidis \cite{2017arXiv170404001L}.  We therefore skip this part of the proof.
\item if $z_1=-\rho$ and $z\not\in \widetilde \Gamma_\eta$, then  after a suitable localization, we can apply the now classical arguments of Soner for state constrained boundary conditions and reach a contradiction, see \cite{MR838056,MR861089,MR951880,MR1484411}. We also skip the details for brevity.
\item We will thus focus on the case when the maximum is reached at $z\in  \widetilde \Gamma_\eta$ such that $z_1=-\rho$, because it contains additional difficulties. 
\end{enumerate}
We will make the following steps:
\begin{enumerate}
\item Localize around $z$: in the domain of interest, the interface will be made of only one straight line.
Moreover, it will be convenient to modify the Hamiltonians for large values of $q$, which is always possible since $u$ and $v$ are Lipschitz continuous. 
\item Recall the definition of the vertex test-function of Imbert-Monneau, see \cite{imbert:hal-01073954}, 
which will be named $ G^{\gamma, z}(x,y)$ below. This function will play the role of the penalty term $|x-y|^2$ in the classical arguments consisting of doubling the variables when there is no interface. 
\item Adapt Soner's arguments for state constrained boundary condition to the present case. In the arguments consisting of doubling the variables, we will use Soner's ideas to ensure  that  the viscosity inequalities for $u$ can be written, i.e. that the maximum point $\hat x$  be such that $\hat x_1>-\rho$. 
\end{enumerate}

\paragraph{\bf Step 1: localization and modification of the Hamiltonians}
We are going to localize the problem around $z$: near $z$, $ \widetilde \Gamma_\eta$ coincides with a straight line that we name $\Delta$. Changing the coordinates if necessary, we can assume that $\Delta= \{y: y_2=0\}$, so $z=(-\rho, 0)$. 
It is not restrictive to assume that for $r>0$ small enough, $\Omega^R_\eta \cap\overline B(z,r)=  \{y: y_2>0\}\cap\overline B(z,r)$ and that $\Omega^L_\eta \cap\overline B(z,r)=  \{y: y_2<0\}\cap\overline B(z,r)$. Therefore, the Hamiltonian is $H^R$ in $\{y: y_2>0\}\cap\overline B(z,r)$ and $H^L$ in $\{y: y_2<0\}\cap\overline B(z,r)$, so it does not depend on $y_1$. In what follows, we will always suppose that $r>0$ is small enough so that $B(z,r) \subset \{y_1<\bar y_1\}$ and that we are in the situation described above. \\
Moreover, noting that $u$ and $v$ are both Lipschitz continuous with a constant $L$ which may depend on $p_2$ but not on $\rho$ and $\eta$, we can modify the Hamiltonians $H^L$ and $H^R$ in such a way:
\begin{itemize}
\item  $q\mapsto H^L( p_2e_2 + q)$ and $q\mapsto H^R( p_2e_2 + q)$ are kept unchanged in the ball $|q|\le 2L$ 
\item $q\mapsto H^L( p_2e_2 + q)$ and $q\mapsto H^R( p_2e_2 + q)$ become second order polynomials in $q$ in a neighborhood of 
$|q|=+\infty$
\end{itemize}
For that, it is enough to replace $H^i$ by $q\mapsto \max( H^i( p_2e_2 + q),  a |q|^2 -b ) $ for well chosen positive constants $a$ and $b$.\\
 Let us name  $\HH ^i$, $i=L,R$ the  modified  Hamiltonians.
With the usual notations, we see that $v$ is a  supersolution of 
\begin{equation}
\label{eq:62}
\left\{
    \begin{array}[c]{lll}
    \HH^L(Dv(y))&\ge \lambda_\rho( p_2)&\hbox{ if } y_1\ge -\rho,\; y_2<0,   \hbox{ and } y\in \overline B(z,r),\\
    \HH^R(Dv(y))&\ge \lambda_\rho( p_2)&\hbox{ if } y_1\ge -\rho,\; y_2>0,   \hbox{ and } y\in \overline B(z,r),\\
    \max \{ \HH^{+,2,L}(Dv^L(y)),\HH^{-,2,R} (Dv^R(y))\}&\ge \lambda_\rho( p_2)&\hbox{ if } y_1\ge -\rho,\; y_2=0,   \hbox{ and } y\in \overline B(z,r),
  \end{array}\right.
\end{equation}
and that $u$ is a subsolution of 
\begin{equation}
\label{eq:64}
\left\{
    \begin{array}[c]{lll}
    \HH^L(Du(y))&\le \lambda_\rho( p_2)-\delta&\hbox{ if } y_1> -\rho,\; y_2<0,   \hbox{ and } y\in \overline B(z,r),\\
    \HH^R(Du(y))&\le \lambda_\rho( p_2)-\delta&\hbox{ if } y_1> -\rho,\; y_2>0,   \hbox{ and } y\in \overline B(z,r),\\
    \max \{ \HH^{+,2,L}(Du^L(y)),\HH^{-,2,R} (Du^R(y))\}&\le \lambda_\rho( p_2)-\delta&\hbox{ if } y_1> -\rho,\; y_2=0,   \hbox{ and } y\in \overline B(z,r).
  \end{array}\right.
\end{equation}
 For brevity, we make an abuse of notation and rewrite the  three inequalities in (\ref{eq:62}) and (\ref{eq:64}) as follows:
\begin{equation}
  \label{eq:65}
  \begin{split}
    \HH(y_2, Dv)&\ge \lambda_\rho( p_2) \quad \hbox{for } -\rho\le y_1   \hbox{ and } y\in \overline B(z,r),\\
    \HH(y_2, Du)&\le \lambda_\rho( p_2)-\delta \quad \hbox{for } -\rho <y_1  \hbox{ and } y\in \overline B(z,r),
  \end{split}
\end{equation}
where  for any $y\in \R^2$,
\begin{eqnarray}
  \label{eq:81}
\HH(y_2, p)= \HH^R (p)\quad  \hbox{ if }y_2>0,\\
\label{eq:82}  \HH(y_2, p)= \HH^L (p)\quad  \hbox{ if }y_2<0,\\
\label{eq:83}  \HH(y_2, (p^L, p^R))=  \max \{ \HH^{+,2,L}(p^L),\HH^{-,2,R} (p^R)\} \hbox{ if }y_2=0.
\end{eqnarray}

\paragraph{\bf Step 2: the test-function of Imbert-Monneau}
Following  \cite[Theorem 3.1]{imbert:hal-01073954}, we are going to use the so-called vertex test-function at $z$: for $\gamma$,  $0<\gamma<1$,  there exists a function $G^{\gamma, z}: \R^2\times \R^2\to \R$ with the following properties:
\begin{enumerate}
\item{(Regularity)}
  \begin{displaymath}
    G^{\gamma, z}\in \cC(\R^2\times \R^2) \hbox{ and } \left\{
      \begin{array}[c]{rcl}
        G^{\gamma, z}(X,\cdot)\in \cR \quad &\hbox{for all }& X\in \R^2,\\
        G^{\gamma, z}(\cdot,Y)\in \cR \quad &\hbox{for all }& Y\in \R^2
      \end{array}
\right.
  \end{displaymath}
where $\cR$ is the set of continuous functions on $\R^2$ whose restrictions to $\R\times[0,\pm\infty)$ are $\cC^1$.
If $f\in \cR$ and $x\in \Delta$, $D f(x)$  denotes the pair   $(D f^L(x), D f^R(x)\in \R^2 \times \R^2$.
\item{(Bound from below)} $G^{\gamma, z}\ge 0=G^{\gamma, z}(z,z)$
\item{(Compatibility condition on the diagonal)} For all $X\in \R^2$,
  \begin{equation}
    \label{eq:66}
0\le G^{\gamma, z}(X,X)=G^{\gamma, z}(X,X)-G^{\gamma, z}(z,z)\le \gamma
  \end{equation}
\item{(Compatibility condition on the gradients)} For all $X,Y\in \R^2$ and $K>0$ with $|X-Y|\le K$,
  \begin{equation}
    \label{eq:67}
\HH(Y_2, -D_YG^{\gamma, z}(X,Y))-\HH(X_2, D_XG^{\gamma, z}(X,Y))\le \omega_{C_K}(\gamma C_K)
  \end{equation}
with $C_K$ given in (\ref{eq:69}) below and $\omega_{C_K}$ is a modulus of continuity defined on $[0,C_K]$.
Here we have used the notations given in (\ref{eq:81})-(\ref{eq:83}), so if $Y_2=0$, $D_YG^{\gamma, z}(X,Y)$ 
is a pair of vectors in $\R^2$ with the same first component.
\item{(Superlinearity)} There exists $g: [0,+\infty)\to \R$ nondecreasing and such that for all $X,Y\in \R^2$,
  \begin{equation}\label{eq:68}
    g(|X-Y|)\le G^{\gamma, z}(X,Y)
  \end{equation}
and $a\mapsto g(a)$ can be chosen to be quadratic  ($g(a) = c_1 a^2 +c_2$) for $a$ large enough. 
\item{(Gradient bounds)} For all $K>0$, there exists $C_K>0$ independent of $\gamma$, such that for all   $X,Y\in \R^2$
  \begin{equation}
    \label{eq:69}
|X-Y|\le K \quad \Rightarrow\quad | G^{\gamma, z}_X(X,Y)| + | G^{\gamma, z}_Y(X,Y)| \le C_K .
  \end{equation}
\end{enumerate}
\begin{remark}
  \label{sec:proof-comp-result}
The fact that $g(a)$ can be chosen quadratic for $a$ large enough comes from the fact that $\HH^i(q) $ are second order polynomials in $|q|$ for $|q|$ large enough, see the proof of Proposition 3.3, step 4, in \cite[\S 3.4]{imbert:hal-01073954}. 
\end{remark}

\paragraph{\bf Step 3: doubling the variables}
Let us introduce 
\begin{equation}\label{eq:70}
  m_{\tau,\gamma}= \max_{ 
    \begin{array}[c]{l}
x,y \in \overline{B(z,r)},\\
-\rho\le x_1,y_1
    \end{array}
} \left(u(x)-v(y)- G_\tau^{\gamma,z}(x, y + \kappa(\tau) e_1) -\phi(x) \right),
\end{equation}
where
\begin{equation}\label{eq:71}
   G_\tau^{\gamma,z}(x, y)=  \tau G^{\gamma, z} \left(z+ \frac {x-z} \tau, z+\frac {y-z} \tau\right)
\end{equation}
and 
\begin{equation}
  \label{eq:72}
\phi(x)= \frac 1 2 |x-z|^2.
\end{equation}
Finally $\kappa(\tau)$ is a  power of $\tau$ with a positive exponent that will be chosen later.
\\
For $\tau$ small enough, taking $x= z+\kappa(\tau)e_1$ and $y=z$, we see that
\begin{displaymath}
  \begin{split}
  m_{\tau,\gamma}&\ge u(z+\kappa(\tau)e_1)-v(z)- G_\tau^{\gamma,z}(z+\kappa(\tau)e_1, z + \kappa(\tau) e_1) -\phi(z+\kappa(\tau)e_1)     \\
  & \ge u(z)-v(z)  -L \kappa(\tau) -\tau \gamma  - \frac 1 2 \kappa^2(\tau)\\
  & = m  -L \kappa(\tau) -\tau \gamma  - \frac 1 2 \kappa^2(\tau)
  \end{split}
\end{displaymath}
Hence, there exists $0<\bar \gamma<1$ and $0<\bar \tau$ such that for all $0<\gamma<\bar \gamma$ and $0<\tau <\bar \tau$,
$m_{\tau,\gamma}\ge \frac m 2 >0$.\\
On the other hand,
\begin{displaymath}
  \begin{split}
    u(x)-v(y)- G_\tau^{\gamma,z}(x, y + \kappa(\tau) e_1) -\phi(x)     
&\le u(y)-v(y)+ L|x-y|-  G_\tau^{\gamma,z}(x, y + \kappa(\tau) e_1) -\phi(x) \\
&\le m+ L|x-y|-  G_\tau^{\gamma,z}(x, y + \kappa(\tau) e_1) -\phi(x).
\end{split}
\end{displaymath}
Therefore, if $\hat x$ and $\hat y$ achieve the maximum in (\ref{eq:70}), then
\begin{displaymath}
  G_\tau^{\gamma,z}(\hat x, \hat y + \kappa(\tau) e_1) +\phi(\hat x)\le \tau \gamma +  L \kappa(\tau) + \frac 1 2 \kappa^2(\tau) + L|\hat x-\hat y|.
\end{displaymath}
From (\ref{eq:68}), this implies that 
\begin{equation}
  \label{eq:63}
  \begin{split}
\tau  g\left(\frac {|\hat x -\hat y - \kappa(\tau) e_1|}\tau\right) +\phi(\hat x)& \le \tau\gamma +  L \kappa(\tau) + \frac 1 2 \kappa^2(\tau)+ L|\hat x-\hat y|   \\
&
\le \tau \gamma +  2L \kappa(\tau) + \frac 1 2 \kappa^2(\tau)+ L\tau  \frac {|\hat x-\hat y- \kappa(\tau) e_1|} \tau. 
  \end{split}
\end{equation}
Using the superlinear behavior of $g$ at infinity, we see that there exists a constant $C>0$ independent of $\gamma$ and $\tau$ such that
\begin{displaymath}
  \tau  g\left(\frac {|\hat x -\hat y - \kappa(\tau) e_1|}\tau\right) \le C.
\end{displaymath}
If for a subsequence still called $\tau$, $|\hat x -\hat y - \kappa(\tau) e_1|>0$, then,  setting $d_{\gamma,\tau} = |\hat x -\hat y - \kappa(\tau) e_1|$,
 the latter inequality can be written
\begin{displaymath}
  \frac \tau  {d_{\gamma,\tau}} g\left(\frac {d_{\gamma,\tau}}\tau\right) \le
\frac  C{d_{\gamma,\tau}}.
\end{displaymath}
From the quadratic behavior of  $g$ away from the origin, we know that there exist two positive constants $D$ and $c $ such that
$g(d)\ge c d^2$ for $d>D$.   
If $d_{\gamma,\tau}  /\tau>D$, then  $\frac  C{d_{\gamma,\tau}}\ge \frac \tau  {d_{\gamma,\tau}} g\left(\frac {d}\tau\right) \ge c  \frac {d_{\gamma,\tau}} \tau$. 
We can choose $D=1/\sqrt{\tau}$ for $\tau$ small enough, which yields that
$d_{\gamma,\tau}$ is bounded by a quantity of the order of $\sqrt{\tau}$.
\\
We have proved that 
\begin{enumerate}
\item $m_{\tau,\gamma}>m/2>0$ for all $0<\gamma<\bar \gamma$ and $0<\tau <\bar \tau$
\item $|\hat x -\hat y - \kappa(\tau) e_1|\le C\sqrt{\tau}$, for a positive constant $C$  independent of $\gamma$, $0<\gamma<\bar \gamma$
\item $\lim_{\tau\to 0}|\hat x -\hat y| =0$,  uniformly in $0<\gamma<\bar \gamma$
\item From (\ref{eq:63}), we see that $\lim_{(\gamma, \tau)\to (0,0)} \hat x= z$.  Hence, for $\tau$ and $\gamma$ small enough,
$\hat x \in B(z,r)$ and $\hat y \in B(z,r)$.
\end{enumerate}
Moreover, choosing $\kappa(\tau)=\tau^{1/3}$ for example, we find that $\hat x_1> -\rho$ for $\tau$ small enough.\\
This allows us to write the following viscosity inequalities:
\begin{eqnarray}
  \label{eq:73}
\HH(\hat x_2, p_X^{\tau,\gamma} + \hat x -z ) &\le& \lambda_\rho(p_2)-\delta,
\\
\HH(\hat y_2, p_Y^{\tau,\gamma}  ) &\ge & \lambda_\rho(p_2),
\label{eq:74}
\end{eqnarray}
(with the notations introduced in (\ref{eq:81})-(\ref{eq:83})),
where 
\begin{eqnarray}\label{eq:75}
   p_X^{\tau,\gamma}&=& G_X^{\gamma,z}\left(  z+\frac {\hat x-z} \tau,z +\frac {\hat y +\kappa(\tau) e_1 -z} \tau  \right),\\
\label{eq:76}   p_Y^{\tau,\gamma}&=& -G_Y^{\gamma,z}\left( z+\frac {\hat x-z} \tau,z+\frac {\hat y +\kappa(\tau) e_1-z} \tau  \right).
\end{eqnarray}
\begin{enumerate}
\item If $\hat x\not \in \Delta$, then the coercivity of the Hamiltonians $\HH^L$ and $\HH^R$ implies that, for  a constant $C$ independent of $\tau$ and $\gamma$,
  \begin{equation}
  \label{eq:77}    |p_X^{\tau,\gamma} | \le C.
  \end{equation}
Subtracting (\ref{eq:74}) and (\ref{eq:73}) yields that 
\begin{equation}
  \label{eq:100}
  \HH(\hat y_2, p_Y^{\tau,\gamma}  )- \HH(\hat x_2, p_X^{\tau,\gamma} + \hat x -z )\ge \delta,
\end{equation}
which is equivalent to
\begin{displaymath}
  \HH( \frac {\hat y_2 } \tau , p_Y^{\tau,\gamma}  )- \HH(\frac {\hat x_2}\tau, p_X^{\tau,\gamma} + \hat x -z )\ge \delta.
\end{displaymath}
Note that $\frac {\hat y_2 } \tau $ is also the second component of  $z+\frac {\hat y -z +\kappa(\tau)e_1} \tau$
and that  $\frac {\hat x_2 } \tau $ is the second component of   $z+\frac {\hat x -z } \tau$.
Then, using (\ref{eq:67}) and the fact that $|\hat x -\hat y - \kappa(\tau) e_1|\le C\sqrt{\tau}$, 
we see that 
\begin{displaymath}
  \HH(\frac {\hat y_2 } \tau, p_Y^{\tau,\gamma}  )-  \HH(\frac {\hat x_2 } \tau, p_X^{\tau,\gamma}  )\le \omega_{ \frac C {\sqrt{\tau}}} (  \frac C {\sqrt{\tau}} \gamma)
\end{displaymath}
or equivalently,
\begin{equation}
  \label{eq:49}
  \HH(\hat y_2 , p_Y^{\tau,\gamma}  )-  \HH(\hat x_2 , p_X^{\tau,\gamma}  )\le \omega_{ \frac C {\sqrt{\tau}}} (  \frac C {\sqrt{\tau}} \gamma).
\end{equation}
Adding and subtracting $ \HH(\hat x_2, p_X^{\tau,\gamma}  )$ in (\ref{eq:100}) and using (\ref{eq:49})
yields
\begin{displaymath}
   \HH(\hat x_2, p_X^{\tau,\gamma}  )- \HH(\hat x_2, p_X^{\tau,\gamma} + \hat x -z ) +\omega_{ \frac C {\sqrt{\tau}}} (  \frac C {\sqrt{\tau}} \gamma) \ge \delta.
\end{displaymath}
Using the properties of the Hamiltonians and (\ref{eq:77}), we get that,
 for some constant  $\tilde C$ independent of $\tau$ and $\gamma$,
\begin{displaymath}
\tilde C |\hat x-z|
 +\omega_{ \frac C {\sqrt{\tau}}} (  \frac C {\sqrt{\tau}} \gamma) \ge \delta.
\end{displaymath}
This yields a contradiction by having $\gamma\to 0$ then $\tau\to 0$.
\item  If $\hat x \in \Delta$ or equivalently $\hat x_2=0$, we see that $\hat x- z$ is colinear to $e_1$ and that
$ \max \{ \HH^{+,2,L}( (p_X^{\tau,\gamma})^L + \hat x- z),\HH^{-,2,R}  ((p_X^{\tau,\gamma})^R+ \hat x- z)\}\le \lambda_\rho( p_2)-\delta$.
This implies 
that for a constant $C>0$ independent of $\tau$ and $\gamma$,
\begin{equation}
   \label{eq:78}
   |p_{X,1}^{\tau,\gamma} | + \max\left(0,-  (p_{X,2}^{\tau,\gamma})^R \right) +  \max\left(0, (p_{X,2}^{\tau,\gamma})^L \right)    \le C .
\end{equation}
where $p_{X,1}^{\tau,\gamma}$ stands for the first coordinate of both $(p_{X}^{\tau,\gamma})^L$ and $(p_{X}^{\tau,\gamma})^R$.
Then using the arguments of Imbert and Monneau in \cite[\S 5.5]{MR3621434},
we can find a constant $K$  independent of $\tau$ and $\gamma$ such that 
\begin{displaymath}
  \HH(0, \bar p_X^{\tau,\gamma}  )=\HH(0, p_X^{\tau,\gamma}  ) \quad \hbox{and}\quad
  \HH(0, \bar p_X^{\tau,\gamma}  + \hat x -z )=\HH(0, p_X^{\tau,\gamma} + \hat x -z  ),
\end{displaymath}
where $ \bar p_{X,1}^{\tau,\gamma}=p_{X,1}^{\tau,\gamma}$,
$ (\bar p_{X,2}^{\tau,\gamma})^R =\min\left( K,(p_{X,2}^{\tau,\gamma})^R\right)$ and
$ (\bar p_{X,2}^{\tau,\gamma})^L =\max\left( -K,(p_{X,2}^{\tau,\gamma})^L\right)$.
Note that we have used the fact that $ \hat x -z $ is colinear to $e_1$ and bounded independently of $\tau$ and $\gamma$.
Since $ |\bar p_{X}^{\tau,\gamma}| \le C$ for a constant $C$ independent of $\tau$ and $\gamma$, there exists a constant $\tilde C$  such that
\begin{equation}
  \label{eq:79}
  \begin{split}
|  \HH(0, p_X^{\tau,\gamma}  )- \HH(0, p_X^{\tau,\gamma} + \hat x -z )| =     
|  \HH(0, \bar p_X^{\tau,\gamma}  )- \HH(0, \bar p_X^{\tau,\gamma} + \hat x -z )| \le \tilde C| \hat x -z|.
  \end{split}
\end{equation}
On the other hand, we have, exactly as above, that
\begin{equation}
  \label{eq:80}
|  \HH(\hat x_2, p_X^{\tau,\gamma}  ) -\HH(\hat y_2, p_Y^{\tau,\gamma}  )|= 
|  \HH( \frac {\hat x_2} \tau , p_X^{\tau,\gamma}  ) -\HH( \frac {\hat y_2} \tau  , p_Y^{\tau,\gamma}  )|
 \le \omega_{ \frac C {\sqrt{\tau}}} (  \frac C {\sqrt{\tau}} \gamma).
\end{equation}
Subtracting (\ref{eq:74}) and (\ref{eq:73}), then using (\ref{eq:79}) and (\ref{eq:80}), and letting $\gamma$ tend to $0$ then $\tau$ tend to $0$ yields the desired contradiction.
\end{enumerate}
\end{proof}

\paragraph{Proof of Proposition~\ref{cor:slopes_omega}}
The proof follows easily from Lemma \ref{SLOPESLemma1} and the local uniform 
convergence of the sequence $\chi_\rho( p_2,\cdot)$ toward $\chi( p_2,\cdot)$, by letting $\rho$ tend $+\infty$ and $\delta$ tend to $0$.
\qed

\paragraph{Proof of Proposition~\ref{cor:control_slopes_W}}
 From Lemma~\ref{lem:rescaling_omega}, we see that $y\mapsto W(p_2,y)$ is Lipschitz continuous w.r.t. $y_1$ and independent of $y_2$, and satisfies 
 \begin{equation}
\label{eq:13}
H^R(\partial_{y_1} W(p_2,y) e_1+p_2e_2)=E^{M,R}( p_2) \quad \hbox{  for a.a. } y_1>\eta. 
 \end{equation}
Consider first the case when $E^{M,R}(p_2)>E_0^R(p_2)$;
from the convexity and coercivity of $H^R$, the observations above yield that almost everywhere in $y$, $\partial_{y_1}W(p_2,y)$ can be either $\Pi^R(p_2)$
(the unique real number such that $H^{+,R}( qe_1+p_2e_2)=E^{M,R}( p_2)$), 
or the unique real number  $q$ (depending on $(p_2)$) such that $ H^{-,R}( qe_1+p_2e_2)=E^{M,R}( p_2)$.  Note that $q<  \Pi^R(p_2)$.
But from Proposition \ref{cor:slopes_omega} and the local uniform convergence of $W_\eta(p_2,\cdot)$ toward $W(p_2,y)$, 
we see that  that for any $y_1>\eta$ and $h_1\ge 0$,
\begin{displaymath}
W(p_2,y+h_1 e_1)-W(p_2,y)\ge \Pi^R(p_2) h_1,
\end{displaymath}
which implies that almost everywhere, $\partial_{y_1}W(p_2,y) \ge  \Pi^R(p_2)>q$. Therefore,  
 $\partial_{y_1}W(p_2,\cdot)= \Pi^R(p_2)$ for almost all $y_1>\eta$.
\\
In the case when  $E^{M,R}(p_2)=E_0^R(p_2)$, we deduce from (\ref{eq:13}) that
 for almost all $y_1>\eta$,   $\overline{\Pi}^R(p_2)\le \partial_{y_1}W(p_2,y) \le \widehat{\Pi}^R(p_2)$. 
\\
We have proved (\ref{cor:control_slopes_W1}). The proof of (\ref{cor:control_slopes_W2}) is identical.
 Finally, (\ref{eq:control_slopes_W_summary}) comes from (\ref{cor:control_slopes_W1}),~(\ref{cor:control_slopes_W2})  and from the fact that $W(p_2,\eta e_1)=0$.
\qed

\noindent{\bf Acknowledgement.} \quad   The work was partially supported by ANR projects ANR-12-BS01-0008-01 and ANR-16-CE40-0015-01.

{\small
\bibliographystyle{amsplain}
\bibliography{homog_interface}
}

\end{document}